\documentclass[a4paper,11pt]{article}
\usepackage{comment}
\usepackage{amsmath}
\usepackage{amssymb}
\usepackage{amsthm}
\usepackage{graphicx}
\usepackage{enumerate}

\usepackage{ tipa }
\usepackage{tikz}
\usepackage{tikz-cd}
\usepackage{hyperref}
\usepackage{cleveref}
\usepackage[
backend=biber,
style=alphabetic,
sorting=nyt
]{biblatex}
\bibliography{bibliography.bib}

\newtheorem{theorem}{Theorem}[section]
\newtheorem{proposition}[theorem]{Proposition}
\newtheorem{lemma}[theorem]{Lemma}
\newtheorem*{lemma*}{Lemma}
\newtheorem{corollary}[theorem]{Corollary}

\newtheorem{definition}[theorem]{Definition}

\newtheorem*{claim}{Claim}

\newtheorem*{note}{Note}
\newtheorem{fact}[theorem]{Fact}

\AddToHook{env/lemma/begin}{\crefalias{theorem}{lemma}}
\AddToHook{env/observation/begin}{\crefalias{theorem}{observation}}
\AddToHook{env/proposition/begin}{\crefalias{theorem}{proposition}}
\AddToHook{env/definition/begin}{\crefalias{theorem}{definition}}
\AddToHook{env/corollary/begin}{\crefalias{theorem}{corollary}}
\AddToHook{env/fact/begin}{\crefalias{theorem}{fact}}

\crefname{corollary}{corollary}{corollaries}

\newenvironment{claimproof}{\proof} {\endproof} 

\renewcommand{\iff}{\Longleftrightarrow}
\renewcommand{\implies}{\Longrightarrow}
\newcommand{\cs}{\textopencorner}
\newcommand{\ce}{\textcorner}
\newcommand{\dcl}{\textrm{dcl}}
\newcommand{\acl}{\textrm{acl}}
\newcommand{\nl}{\newline}
\newcommand{\tx}{\textrm}

\newcommand{\Sn}{\mathcal{S}_n}
\newcommand{\Tn}{\mathcal{T}_n}
\newcommand{\Spn}{Sp_n}

\renewcommand{\dim}{\tx{dim}}
\makeatletter
\tagsleft@true\let\veqno\@@leqno

\tikzcdset{scale cd/.style={every label/.append style={scale=#1},
		cells={nodes={scale=#1}}}}

\begin{document}
	
	\title{Definable Functions to Quotients in Ordered Abelian Groups}
	\author{Harper Wells}
	\date{}
	\maketitle
	\begin{abstract}
    In this paper we study definable families of functions from an ordered abelian group into various naturally arising definable quotients. We show that for an ordered abelian group $G$ and definable family of convex subgroups $\{D\}_{D\in\mathcal{D}}$, any definable family of functions $\{f_D\} _{D\in\mathcal{D}}$ with $f_D:G^d\rightarrow\frac{G}{D}$ is uniformly piecewise linear; for a prime $p$, integers $s,r\geq 1$, and groups $D^{[p^s]}$ defined later, if $f_D:G^d\rightarrow\frac{G}{D+p^rG}$ or $f_D:G^d\rightarrow\frac{G}{D^{[p^s]}+p^rG}$ we instead obtain that the definable family of functions is uniformly piecewise a boolean combination of linear functions to quotients by subgroups which are uniformly definable from $D$.
    \end{abstract}

\section{Introduction}
The study of the model theory of ordered abelian groups goes back to at least
the sixties, starting with Robinson and Zakon in \cite{RobinsonZakon}
classifying regular ordered abelian groups, and Gurevich expanding on this and
studying decidability of ordered abelian groups in \cite{Gurevic}. Quantifier
elimination in certain cases was studied by Weispfenning in
\cite{WeispfenningQE}, and a relative quantifier elimination for all ordered
abelian groups was introduced by Schmitt in \cite{SchmittHab}. Cluckers and
Halupczok more recently proved a relative quantifier elimination in \cite{QE}
which uses the imaginary expansion of the group, rather than Schmitt's result
involving the spines as an external structure. 
\nl\nl
The study of definable functions in ordered abelian groups has been studied in the case of groups of finite regular rank by Belegradek, Verbovskiy, and Wagner in \cite{CosetMinimal}, and then further by Cluckers and Halupczok in their quantifier elimination paper. For Cluckers and Halupczok, the motivation arose from studying definable functions in Henselian valued fields. The main goal of this paper is to study definable functions not just from the group to itself, but from the group to various definable quotients; the motivation for this arose from questions regarding elimination of imaginaries in a general ordered abelian group.
\nl\nl
When it comes to elimination of imaginaries, work of Vicar{\'{\i}}a in \cite{Mariana} deals with the case of bounded regular rank; in \cite{ImaginariesProductsAndAdeleRing}, Derakhsan and Hrushovski prove a result for elimination of imagninaries in general products of structures, while in \cite{noEI} Liccardo proves a negative result regarding elimination of imaginaries in pure ordered abelian groups. While extensive, these results still don't constitute a complete image of elimination of imaginaries in ordered abelian groups; namely, not all ordered abelian groups are elementarily equivalent to products of ordered abelian groups of bounded regular rank, and even in the case of such products there is potential to improve the collection of sorts used to obtain elimination of imaginaries.
\nl\nl
One method for proving weak elimination of imaginaries down to a specified collection of sorts involves proving density of definable types, and finding codes for definable types in those sorts. A proof of this can be found in \cite{ProofOfEI}. This second condition can also be broken down further into coding definable $1$-types and coding germs of definable functions into (finite subsets of) the sorts to which imaginaries are to be eliminated; this was the motivation for this project.
In the aim of showing that germs of definable functions have codes in appropriate sorts, we show that uniform families of definable functions from the group into natural quotient sorts are given piecewise by uniform intersections of linear functions into certain other quotients which are uniformly definable from the target quotient. This then allows us to code the uniform family of functions with a combination of a definable set in one of the spines and a tuple of elements coming from quotient groups of the form we study in this paper.
\nl\nl
Specifically, we classify definable functions $f:G^d\rightarrow \frac{G}{H}$ with $H = D$, $H=D+p^rG$, and $H=\alpha^{[p^s]}+p^rG$ where $D$ is a convex subgroup, $p$ is prime, and $\alpha$ is a member of one of the spines; for the definition of spines and $\alpha^{[p^s]}$ see \Cref{SnTnTn^+,def D^{[n]}}. For $H=D$ or $H=D+p^rG$ we have the following:

\begin{theorem}\label{introTheorem}
	
	Suppose either $H = D$ or $H= D+p^rG$ for some $D\leq G$ convex and $r\geq 1$. Also suppose that $d,e\in\mathbb{N}_{>0}$, and $f:G^d\rightarrow (\frac{G}{H})^{(e)}$ is a $B$-definable function, where by $X^{(e)}$ we mean subsets of $X$ of size $e$. Then we can find some $k\in \mathbb{N}$, $\acl^{eq}(B)$-definable sets $\{X_i\subseteq G^d\}_{i\leq k}$, $\acl^{eq}(B)$-definable linear functions $f_i:X_i\rightarrow \frac{G}{H}$ such that $f = \bigcup\limits_{i\leq k}f_i$.
\end{theorem}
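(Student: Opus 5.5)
We will argue by induction on $d$, with the relative quantifier elimination of Cluckers and Halupczok \cite{QE} in the auxiliary sorts as the main tool. By a linear function $X\to\frac{G}{H}$ we mean one of the form $x\mapsto \frac{1}{m}(\sum_j a_jx_j + c) + H$ with $a_j\in\mathbb{Z}$, $m\in\mathbb{N}_{>0}$ and $c$ an $\acl^{eq}(B)$-definable element. Here $X$ is restricted so that $m$ divides $\sum_j a_jx_j+c$ modulo $H$, and the division is taken in $\frac{G}{H}$ wherever it makes sense.

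\textbf{Reduction to $e=1$.} An $e$-valued function $f$ is the union of the $e$ ``branches'' of its graph. The only obstacle to naming a branch is that it needs a choice of an element of a finite set. We can make that choice over $\acl^{eq}(B)$ after partitioning $G^d$ into finitely many definable pieces. On each piece we use a definable ordering of $\frac{G}{H}$: when $H=D$ the quotient is ordered. When $H=D+p^rG$ we pass to the finite quotient $\frac{G}{p^rG}$ and work coset by coset, which costs only parameters in $\acl^{eq}(B)$. So it suffices to treat single-valued $f$, allowing parameters from $\acl^{eq}(B)$.

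\textbf{Case $d=1$.} Take $f:G\to\frac{G}{H}$ and consider the graph $\{(x,y+H): f(x)=y+H\}$, which we pull back to the definable set $\{(x,y)\in G^2 : y\in f(x)\}$. By the quantifier elimination, this set is a finite boolean combination of conditions of the following kinds:
\begin{itemize}
\item order conditions $t(x,y)\,\square\,_{\alpha} 0$ modulo spine-indexed convex subgroups;
\item congruence conditions $t(x,y)\in \alpha^{[p^s]}+p^rG$;
\end{itemize}
here the $t$ are $\mathbb{Z}$-linear terms and $\alpha$ ranges over $B$-definable elements of the spines $\Sn,\Tn$. For $x$ fixed, the fiber is a single coset of $H$. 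Using the o-minimal-like structure of definable subsets of $G$ (finitely many convex pieces modulo congruences, with endpoints in $\acl^{eq}(B)$), we partition $G$ so that on each piece one fixed term $t(x,y)$ cuts out this fiber. The fiber then has the form $\{y : my - \ell x - c \in H'\}$ for some $H'\supseteq H$. Uniqueness of the coset forces the solution set to be exactly one $H$-coset, which yields $f(x)=\frac{1}{m}(\ell x+c)+H$ on that piece, that is, a linear function.

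\textbf{Induction on $d$.} Write $x=(x',x_d)$. For each fixed $x'$, the case $d=1$ applied uniformly in the parameter $x'$ gives pieces and linear functions whose constants $c(x')$ are definable functions of $x'$. The functions $c$ map $G^{d-1}$ into $G$ or into a quotient. Two points need checking:
\begin{itemize}
\item the $d=1$ decomposition is uniform in parameters, with finitely many term shapes (by compactness);
\item the partition endpoints and the constants $c(x')$ are definable functions of $x'$ into $G$, into quotients of the same type, or into spines.
\end{itemize}
We apply the induction hypothesis to the $c(x')$. When $c$ takes values in a coarser quotient $\frac{G}{H'}$ with $H'\supseteq H$, we lift it to $\frac{G}{H}$ inside the same formula. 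For the spine-valued data, which are finitely many values piecewise, we use that definable functions from $G^{d-1}$ into the spines are locally constant on finitely many definable pieces.

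\textbf{Main obstacle.} The hardest step will be the $d=1$ case for $H=D+p^rG$. There the fiber condition mixes order conditions modulo $D$ with congruences modulo $p^rG$, and the $\alpha^{[p^s]}$ terms can create groups strictly between $D+p^rG$ and $D$. We plan to handle this by first fixing the residue of $y$ modulo $p^rG$ on each piece, which is a finite choice over $\acl^{eq}(B)$. That reduces the problem to the case $H=D$ applied within a single coset.
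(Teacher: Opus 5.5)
Your proposal has a genuine gap at its central step. In the $d=1$ case you assert that, after partitioning, a single term cuts out the fibre, so that $f(x)=\{y: my-\ell x-c\in H'\}$, and ``uniqueness'' then finishes. That assertion is not proved, and it is essentially the whole theorem.

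After quantifier elimination, and for a fixed value $\bar\theta$ of the spine variables, the fibre over $x$ is the following set: the solutions of several positive congruences modulo subgroups $K+p^jG$ ($K$ convex or of the form $\alpha^{[p^t]}$, $j\le r$), minus the solutions of some negative ones. A priori this is an \emph{intersection} of linear maps into coarser quotients, not one linear map. \Cref{examples section} shows this really happens for $H=\alpha^{[p^3]}+p^2G$: there $f=f_1\cap f_2$ is not piecewise linear. So any proof must use something specific to $D+p^rG$ with $D$ convex, and your plan has no such ingredient. In the paper it has three parts:
\begin{enumerate}[(i)]
\item congruences modulo subgroups contained in $D+p^jG$ are discarded via \Cref{QE 3.2,QE 3.3};
\item \Cref{DescendingInfiniteQuotients} (applied in $G/D$) shows that, unless some \emph{positive} congruence is modulo a group $H_i+p^rG$ of finite index over $D+p^rG$, the fibre contains infinitely many $D+p^rG$-cosets; \Cref{main for a^p^s+p^rG} is used when the relevant $H_j$ is of the form $\alpha^{[p^s]}$;
\item that finite-index congruence is split into finitely many congruences modulo $D+p^rG$ (\Cref{nonuniform main for prG}).
\end{enumerate}
This last step is where $\acl^{eq}(B)$ genuinely enters. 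It does not come from any finiteness of $G/p^rG$. For $H=D$ the paper uses a different route: stable embeddedness of $G/D$ lets it rewrite the defining formula inside $G/D$, where (the proof of) \Cref{Functions to G are linear} applies.

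Several supporting claims are also false.
\begin{itemize}
\item $G/p^rG$ need not be finite. For example, $G=\bigoplus_\omega\mathbb{Z}_{(2)}$ in \Cref{examples section} has $[G:2G]=\infty$. When $[G:D+p^rG]<\infty$ the theorem is trivial, so your reduction to $e=1$ for $H=D+p^rG$ and your treatment of the ``main obstacle'' only cover the trivial case.
\item Fixing $y$ modulo $p^rG$ already fixes $y+D+p^rG$, so no residual instance of the case $H=D$ arises.
\item The reduction to $e=1$ is unnecessary anyway, since the $X_i$ in the statement may overlap.
\item The Cluckers--Halupczok elimination is relative to the spines. The graph is defined by $(\exists\bar\theta)(\xi(\bar\theta,\bar\eta)\wedge\psi(\bar x,y,\bar\theta))$, and the spine elements indexing the literals are existentially quantified, not $B$-definable. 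Definable subsets of $G$ admit no o-minimal-like description in general. The paper therefore argues for each fixed $\bar\theta$ and recovers the family-union by compactness: a uniform bound on the finite indices, together with uniqueness of an extension of $D+p^rG$ of given finite index in this chain.
\item Definable maps into a spine are not piecewise constant; $x\mapsto s_n(x)$ already fails this whenever $\Sn$ is infinite.
\item A function into $G/H'$ with $[H':H]=\infty$ need not lift definably to $G/H$.
\end{itemize}
Because of the last two points, the induction on $d$ does not go through as described. It is also unnecessary: the literals produced by quantifier elimination already have the form $ry\,\square\,t(\bar x)$ in all of $\bar x$ at once.
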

\noindent For $H=\alpha^{[p^s]}+p^rG$ the result is a bit more technical; for a full breakdown of the individual function case see \Cref{(theorem) main for a^p^s+p^rG}. These results are also immediately uniformisable in the case where $H=D$ and $H=\alpha^{[p^s]}+p^rG$, while some modifications are required for the uniform case for $H=D+p^rG$.
\nl\nl
In \Cref{prelims} we go over some required definitions and additional preliminaries, while in \Cref{main results} we state the non-uniform versions of the main results in full. In \Cref{convex section,alpha^{[p^s]}+p^rG section,D+p^rG section} we state and prove the uniform versions of the theorems, obtaining the non-uniform versions as corollaries. Finally, in \Cref{examples section} we give various examples showing why we cannot have an immediate uniform adaptation of \Cref{introTheorem} for all of the various cases.

\section{Preliminaries and Notation}\label{prelims}

\subsection*{Ordered Abelian Groups}
In this section, we cover various preliminaries regarding the model-theoretic treatment of ordered abelian groups (oags). First, we recall that when we consider $(\mathbb{Z}, +, 0, <)$ as an oag, we obtain quantifier elimination in the language of Presburger arithmetic, $L_{Pres} := \{0, 1, +, -, <, \{\equiv_p\}_{p\in\mathbb{P}} \}$, where $\mathbb{P}$ is the set of primes, and $a\equiv_p b \iff (\exists z)(a-b = pz)$.
\nl\nl
From this, we see that divisibility essentially controls the model theory of $(\mathbb{Z}, +, 0, <)$, and the same turns out to hold true for a general ordered abelian group, although we have to take extra care when considering what is meant by divisibility. In particular, we have the following definitions:
\begin{definition}\label{SnTnTn^+}
	Let $(G, 0, +, <)$ be an oag, and $g\in G,n\in\mathbb{N}_{>0}$. Then we have the following definitions:
	\begin{enumerate}[(1)]
		\item Let $H_{n,g}$ be the largest convex subgroup of $G$ such that $g\notin H_{n,g} + nG$. We define $\Sn := G/\sim$, where $a\sim b\iff H_{n,a} = H_{n,b}$, and let $s_n:G\rightarrow\Sn$ be the canonical projection map. If $g\in nG$ then we set $H_{n,g} = \{0\}$.
		\item Let $H'_{n,g} = \bigcup\limits_{\substack{\alpha\in\Sn\\g\notin\alpha}}\alpha$, where the empty union is $\{0\}$. We define $\Tn := G/\sim'$, where $a\sim' b\iff H'_{n,a} = H'_{n,b}$, and $t_n:G\rightarrow\Tn$ is the canonical projection map
		\item Let $H'^+_{n,g} = \bigcap\limits_{\substack{\alpha\in\Sn \\g \in\alpha}}\alpha$. We define $\Tn^+ := G/\sim^+$, where $a\sim^+b \iff H'^+_{n,a}=H'^+_{n,b}$, and $t_n^+:G\rightarrow \Tn^+$ is the canonical projection map
		\item The $n$-spine of $G$ is $\Spn = \Sn\cup\Tn\cup\Tn^+$, which forms a preorder via inclusion of convex subgroups; this collapses to a total order after equating elements which represent the same subgroup.
	\end{enumerate}
\end{definition}
\begin{note}
	Each of $H_{n,g}, H'_{n,g}, H'^{+}_{n,g}$ are uniformly definable convex subgroups of $G$, hence each of $\Sn, \Tn,\Tn^+$ is an imaginary sort of $G$. Throughout this paper (including in the above definitions), we conflate equivalence classes in the $n$-spines with the subgroup they correspond to for notational convenience.
\end{note}

\begin{definition}\label{def D^{[n]}}
	For $D\leq G$ be convex, $n\in\mathbb{N}_{>0}$, we define
    \begin{align*}
        D^{[n]} = \bigcap\limits_{\substack{D\leq H\leq G\\ H \tx{ convex}}}H+nG
    \end{align*}
    By \cite{QE}, we have that
    \begin{align*}
        D^{[n]} = \bigcap\limits_{\substack{D\leq \alpha\leq G\\ \alpha \in\Sn}}\alpha+nG
    \end{align*}
\end{definition}
\begin{note}
	Suppose $g\in D^{[n]}\setminus D$. Then $s_n(g) = D$, hence $D \subsetneq D^{[n]}\implies D\in \Sn$.
\end{note}
\begin{fact}\label{Functions to G are linear}
	For any function $f:G^n\rightarrow G$ which is $L_{oag}$-definable with parameters from $B$, there exists a partition of $G$ into finitely many $B$-definable sets such that on each set $A$, $f$ is linear: there exists $r_1,\dots, r_n, s,k\in \mathbb{Z}$ with $s\neq 0$, $b\in\dcl(B)$ with $\sum_ir_ix_i + b$ a term appearing in the definition for $f$, and $k$ bounded by $N\in\mathbb{N}$ depending only on the formula used to define $f$, such that for any $\bar a\in A$, we have $f(a_1,\dots,a_n) = \frac{1}{s}(\sum_i r_ia_i + b+k)$.
\end{fact}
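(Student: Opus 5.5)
The plan is to derive \Cref{Functions to G are linear} from the relative quantifier elimination of Cluckers and Halupczok \cite{QE}. In that language, $G$ lives in the main sort, the spines $\Spn$ are auxiliary sorts, and there are predicates for congruence modulo $\alpha+nG$ for $\alpha\in\Spn$. Let $\varphi(\bar x,y)$ be the $B$-formula defining the graph of $f$. By quantifier elimination, $\varphi$ is equivalent to a boolean combination of atomic formulas. The main-sort atoms are of two kinds: order atoms $t(\bar x,y)<0$ and equations $t=0$, with $t$ a $\mathbb{Z}$-linear term with parameters; and congruence atoms $t(\bar x,y)\in\alpha+nG$, possibly shifted by the constants $k_\eta$ that appear in the language. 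The spine arguments $\alpha$ are themselves of the form $s_n(t')$, $t_n(t')$ or $t_n^+(t')$ for linear terms $t'$. Each term involving $y$ can be written $m y - \ell(\bar x)$, with $m\in\mathbb{Z}$ and $\ell$ linear with parameters.

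The key steps are as follows.

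\textbf{Step 1: a finite list of candidate values.} Fix $\bar a$ and let $b=f(\bar a)$. Consider the finitely many "cut points" $\frac{1}{m}(\ell(\bar a)+k)$ for the terms above, with $|k|\le N$, whenever they lie in $G$. I would argue by contradiction that $b$ is one of these. Suppose $b$ is not a cut point. Then a small move of $b$ preserves every order atom and every equation. A move inside a sufficiently small convex set also preserves every spine value $s_n(my-\ell)$, since such values are locally constant away from the points where the term vanishes. Choosing the shift to be divisible by $M$, the product of all moduli, preserves all congruence atoms.

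\textbf{Step 2: where the main obstacle lies.} The difficulty is making Step 1 rigorous in the discrete case. Near $b$ there may be no nonzero element of $MG$ small enough to stay within the relevant cuts. When $G$ is discrete near $b$, the congruence conditions then pin $b$ down only up to a bounded shift by a multiple of the minimal positive element. That bounded shift is exactly where the integer $k$ with $|k|\le N$ enters. In this case I would show that $b$ is obtained from a cut point by adding $k\cdot 1$ and dividing by $s$, with $N$ controlled by $M$ and the number of atoms. I expect this uniform bound on $k$ to be the main technical step.

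\textbf{Step 3: from pointwise to piecewise.} For each of the finitely many tuples $(m,\ell,k)$, the set $A_{m,\ell,k}=\{\bar a: f(\bar a)=\frac1m(\ell(\bar a)+k)\}$ is $B$-definable. By Step 1 these sets cover $G^n$. Disjointifying them in a fixed order gives the required finite partition, and on each piece $f$ has the stated linear form. Every parameter $b$ is a term in $\dcl(B)$ drawn from $\varphi$, and $N$ depends only on $\varphi$, as the statement requires.
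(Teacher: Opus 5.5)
Your proposal is correct and follows essentially the route the paper relies on when it defers to the proof of Corollary~1.10 of \cite{QE}: relative quantifier elimination, then showing that the unique solution $f(\bar a)$ of the fibre formula is a boundary point $\frac1s(\ell(\bar a)+k)$ of one of its atoms (with $|k|$ bounded via shifts by $M\cdot 1$ in the discrete case), then splitting $G^n$ into the finitely many $B$-definable sets on which a given candidate term is correct. One small repair: $s_n$ is not locally constant away from the zeros of its argument (it can jump near nonzero elements of $nG$), so $s_n$-values are preserved because your shift lies in $MG$ and you take every such $n$ to divide $M$, and the relative atoms $<_\alpha$, $=_\alpha$ with $\alpha\neq\{0\}$, which your list omits, are unions of $\alpha$-cosets and so are preserved by the same shifts.
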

\begin{proof}
	The proof for this is exactly the proof of Corollary~1.10 of \cite{QE} 
\end{proof}
\subsection*{General Notation}
Throughout this paper we will make use of various other potentially nonstandard notation. First, there is a notational clash between the usual notation for the set of subsets of a fixed finite size and the subgroup defined in \Cref{def D^{[n]}}, so given a set $X$, and $n\in \mathbb{N}_{\geq 1}$ we have
\begin{align*}
    X^{(n)} = \{A\subseteq X : |A| = n\}
\end{align*}
Also, throughout this paper we deal with functions from a group $G$ to various quotients. If we have functions $f: G^d\rightarrow \frac{G}{H}, g: G^d\rightarrow \frac{G}{K}$ then these determine a (partial) function $h: G^d\rightarrow \frac{G}{H\cap K}$, where $h(\bar x)=y+H\cap K\iff f(\bar x) = y + H \wedge g(\bar x) = y+K$. We will write $h = f\cap g$. Note that if we consider these functions as relations on $G^{d+1}$ then this is literally true. We also make use of this identification for other boolean operations.
\nl\nl
Finally, if we have a linear function $f:G^d\rightarrow G$ and some $H\leq G$ which is either of the form $D+nG$ or $D^{[m]}+nG$, which will be of the form $f(\bar x) = \frac{1}{s}(\sum r_ix_i + g + k)$ with $s,r_i,k\in\mathbb{Z}$, we will write $f^H(\bar x) = \frac{1}{r}(\sum r_ix_i + g + n_{D})+H$, where $1_D$ is the smallest positive coset of $\frac{G}{D}$ if it exists and $0$ otherwise, and $k_D = k\cdot1_D$. Note that this is \textbf{not} simply the projection to $\frac{G}{H}$.

\newpage
\section{Definable functions from an ordered abelian group into a quotient}\label{main results}
A key object of study in any structure is the class functions definable in said structure. In \cite{QE}, it is shown that for $G$ an oag, any definable function $f:G^n\rightarrow G$ is piecewise linear. Note, however, that the quantifier elimination result for oags makes use of the expanded structure of the group along with sorts for the $n$-spines of the group, and we also add symbols for Presburger arithmetic relative to the quotients by elements of the $n$-spines, noting that when adding symbols for Presburger arithmetic we may restrict to only adding symbols for primes and prime powers. As such, we would like to understand the definable functions $f:G^d\rightarrow \frac{G}{E_\alpha}$, where $E$ is an equivalence relation given by one of the relative Presburger symbols.
\nl\nl
Note that $E_\alpha$ is given by cosets of the following subgroups:
\begin{itemize}
	\item $\alpha$ for some $\alpha\in Sp_n$ for some $n$
	\item $\alpha + p^rG$ with $\alpha\in \Sn$ with $r\geq 1$ and $p$ prime
	\item $\alpha^{[p^s]}+p^rG$ for some $\alpha\in \Sn$ with $s\geq r\geq 1$ and $p$ prime
\end{itemize}
We also expand our focus by allowing $\alpha$ to be any definable convex subgroup; note however that if $g\in D^{[p^s]}\setminus (D+p^sG)$, then $s_{p^s}(g) = D$ and so $D\in \mathcal{S}_{p^s}$.
\nl\nl
We now state the main two theorems of the paper, which also treat functions of the form $f:G^d\rightarrow(\frac{G}{H})^{(e)}$. For the case where $H$ is either convex, or $H= D+p^rG$ with $D$ convex, we get that $f$ is piecewise linear:

\begin{theorem}\label{main}
	
	Suppose either $H = D$ or $H= D+p^rG$ for some $D\leq G$ convex and $r\geq 1$. Also suppose that $d,e\in\mathbb{N}_{>0}$, and $f:G^d\rightarrow (\frac{G}{H})^{(e)}$ is a $B$-definable function. Then we can find some $k\in \mathbb{N}$, $\acl^{eq}(B)$-definable sets $\{X_i\subseteq G^d\}_{i\leq k}$, $\acl^{eq}(B)$-definable linear functions $f_i:X_i\rightarrow \frac{G}{H}$ such that $f = \bigcup\limits_{i\leq k}f_i$.
\end{theorem}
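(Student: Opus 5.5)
We reduce to the case $e=1$ and then use \Cref{Functions to G are linear} by lifting values in $\frac{G}{H}$ to definable values in $G$.

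\textbf{Reduction to $e=1$.} The graph of $f$ is $\{(\bar x, y+H) : y+H\in f(\bar x)\}$, so it suffices to show that every $B$-definable relation $R\subseteq G^d\times\frac{G}{H}$ whose fibres $R_{\bar x}$ have size $e$ is a finite union of graphs of $\acl^{eq}(B)$-definable linear functions on definable pieces. We argue by induction on $e$. We isolate one definable "branch" of $R$, namely a definable function $g$ with $g(\bar x)\in R_{\bar x}$, possibly using finitely many extra $\acl^{eq}(B)$ parameters. We then apply the induction hypothesis to $R$ minus the graph of $g$.

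\textbf{The $e=1$ case.} Fix $\bar x$. The value $f(\bar x)$ is a coset $y+H$, and the set $\{y\in G: y+H=f(\bar x)\}$ is uniformly definable in $\bar x$. We try to choose a definable representative in $G$ as follows.

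For $H=D$ convex, the coset $y+D$ is a convex set. We use the quantifier-elimination description from \cite{QE} of definable subsets of $G$ in one variable: they are finite unions of intersections of intervals with cosets of subgroups $\alpha+nG$. This lets us partition $G^d$ into finitely many pieces. On each piece, $f(\bar x)$ is determined by a term $t(\bar x)$ appearing in the defining formula. Concretely, $f(\bar x)$ is the $D$-coset of $\frac{1}{s}(\sum r_ix_i+b+k)$, where $b$ is an endpoint parameter. Such an endpoint may only be definable in a quotient sort, which is why $\acl^{eq}(B)$ enters. The integer $k$ shifts by multiples of $1_D$ when $G/D$ has a least positive element, so we obtain the form $f^H$ from the notation section.

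Put differently, from the formula defining $f$ we extract finitely many terms $\frac1s(\sum r_ix_i+b)$ whose value, or whose boundary behaviour, pins down the coset. A compactness argument then shows that finitely many such terms suffice uniformly over the pieces.

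For $H=D+p^rG$, we first pass to $\frac{G}{D}$: the image of $f$ there is a function into $\frac{G/D}{p^r(G/D)}$. Each coset of $D+p^rG$ is a union of $D$-cosets meeting a fixed class modulo $p^r$. In the one-variable normal form, the congruence conditions modulo $p^r$ relative to $D$ are given by terms $\sum r_ix_i+b \equiv_{p^r} c$. Solving them yields $f(\bar x)=\frac1s(\sum r_ix_i + b) + H$ on each piece. Here $b$ ranges over finitely many residues, and each choice is $\acl^{eq}(B)$-definable, since $\frac{G}{D+p^rG}$ is finite modulo the relevant data.

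\textbf{Main obstacle.} The hardest step is the one above: showing that the coset value is controlled by a single linear term on each piece. In particular we must show that the parameter $b$ can be chosen in $\acl^{eq}(B)$ rather than merely in some model. We plan to deal with this by working in the relative QE language of \cite{QE}, where quantifier-free formulas in the variable $y$ involve only terms linear in $(\bar x,y)$ and the predicates $\equiv_{m,\alpha}$ and order on quotients. The condition "$y+H=f(\bar x)$" is then a Boolean combination of such atoms, and uniqueness of the coset forces one atom to isolate it. For $D+p^rG$, different pieces may demand different residue parameters. These are finitely many conjugates, which is why we only get $\acl^{eq}(B)$ and not $\dcl^{eq}(B)$. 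The remaining case split and the bookkeeping of the constant $k_D$ are routine.
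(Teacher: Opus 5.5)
There is a genuine gap in the case $H=D+p^rG$. It sits at your claims that the fibre $\{y: y+H\in f(\bar x)\}$ is cut out by congruences ``modulo $p^r$ relative to $D$'', and that ``uniqueness of the coset forces one atom to isolate it''. After relative quantifier elimination, the fibre is a family-union over spine parameters $\bar\theta$ of conjunctions of literals. These involve congruences modulo $\alpha+p^jG$ and $\alpha^{[p^k]}+p^jG$ for many $\alpha$ (above and below $D$) and all $j\leq r$. For a general subgroup of this kind, a single coset can be an intersection of positive conditions each of infinite index over it. The example in \Cref{examples section} does exactly this with $\alpha^{[p^3]}+p^2G=(\alpha^{[p^2]}+p^2G)\cap(\alpha^{[p^3]}+pG)$, and that function is \emph{not} piecewise linear. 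Your argument uses nothing that separates $D+p^rG$ from $\alpha^{[p^s]}+p^rG$, so it would prove a false statement.

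The missing input has two parts. (i) Congruences modulo subgroups below $D+p^jG$ must be eliminated by projecting ``coset minus disjoint subcosets'' via \Cref{QE 3.2}, with \Cref{QE 3.3} making the resulting index condition definable. (ii) You need the index fact \Cref{DescendingInfiniteQuotients} (via \Cref{leftconvexrelationship} and \Cref{leftconvexnonconvexrelationship2}): $[K+p^rG:D+p^rG]=\infty$ already forces $[(K\cap p^{r-1}G)+D+p^rG:D+p^rG]=\infty$. Hence infinite-index positive conditions minus finitely many negative ones never leave finitely many cosets. So some positive congruence $my\equiv_{H_i,p^r}t(\bar x)$ with $p\nmid m$ and $[H_i+p^rG:D+p^rG]<\infty$ must occur, with \Cref{main for a^p^s+p^rG} handling the subcase where the relevant subgroup has the form $\alpha^{[p^s]}$.

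This is also the real source of $\acl^{eq}(B)$. The subgroup $H_i+p^rG$ is determined by its finite index over $D+p^rG$, and its finitely many $D+p^rG$-cosets are algebraic. So the congruence splits into finitely many congruences modulo $D+p^rG$ (\Cref{nonuniform main for prG}), with compactness bounding the index uniformly in $\bar\theta$. Endpoints in quotient sorts play no role here. For $H=D$ the paper gets $B$-definable pieces by using stable embeddedness of $G/D$ and applying \Cref{Functions to G are linear} inside $G/D$. Your ``pass to $G/D$'' step tacitly requires the same stable embeddedness.

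Your reduction to $e=1$ is also unjustified for $H=D+p^rG$. For $H=D$ you can take the least of the $e$ cosets in the ordered group $G/D$ as a branch. But $G/(D+p^rG)$ carries no definable order or choice, and you give no construction of a selector uniform in $\bar x$. Its existence, even over $\acl^{eq}(B)$, is essentially what is being proved, and finitely many extra parameters do not by themselves produce one. The paper never selects a branch. It shows that every satisfiable conjunct of the normal form, for fixed $\bar\theta$, already contains a positive congruence of finite index over $D+p^rG$, which handles all of the at most $e$ cosets of the fibre at once.
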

\begin{note}
	We may assume that $[G:H]=\infty$, since otherwise the theorem is immediate by taking $f_i$ to be constant and $X_i = \{\bar x:f(\bar x) = f_i(\bar x)\}$
\end{note}
\noindent However, the case where $H= \alpha^{[p^s]}+p^rG$ it is not quite as simple, and requires the following technical modification:
\newpage
\begin{theorem}\label{(theorem) main for a^p^s+p^rG}
	Let $d,e\in\mathbb{N}_{>0}$, $\alpha\in \Sn$ for some $n$, $1\leq r \leq s$, $\alpha^{[p^s]}+p^rG \neq D+p^rG$ for any convex $D\leq G$, and $f:G^d\rightarrow (\frac{G}{\alpha^{[p^s]}+p^rG})^{(e)}$ be a $B$-definable function. Then we can find:
	\begin{itemize}
		\item $k, n_j, m_j\in \mathbb{N}$,  $B$-definable sets $\{X_j\subseteq G^d\}_{j\leq k}$
		\item $1\leq \ell_{j,1},\dots,\ell_{j,n_j}\leq s$, $1 \leq i_{j,1},\dots,i_{j,n_j}\leq r$ with 
        \begin{align*}
            \left[\bigcap\limits_{1\leq t\leq n_j}(\alpha^{[p^{\ell_{j,t}}]}+p^{i_{j,t}G}):\alpha^{[p^s]}+p^rG\right]<\infty
        \end{align*}
		\item $B$-definable linear functions $f_{j,t}:G^d\rightarrow G$, $g_{j,t}:G^d\rightarrow G$
		\item $1\leq v_{j,1},\dots,v_{j,m_j}\leq s$, $1\leq w_{j,1},\dots,w_{j,m_j}\leq r$
	\end{itemize} such that $f = \bigcup\limits_{j\leq k} ((\bigcap\limits_{t\leq n_j}f_{j,t}^{H_{j,t}}|_{X_j})\setminus(\bigcup\limits_{t\leq m_j}g_{j,t}^{K_{j,t}}|_{X_j}))$, where $H_{j,t}=\alpha^{[p^{\ell_{j,t}}]}+p^{i_{j,t}}G$ and $K_{j,t} = \alpha^{[p^{v_{j,t}}]}+p^{w_{j,t}}G$
\end{theorem}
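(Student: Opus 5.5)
The plan is to follow the template used for \Cref{main}: reduce to a single-valued function on one piece, use the relative quantifier elimination of \cite{QE} to describe the graph of $f$, and then read off the linear pieces. First, I will replace the set-valued $f$ by $e$ single-valued branches. The natural route is via a definable choice of ordering of the $e$ values, but the quotient $\frac{G}{\alpha^{[p^s]}+p^rG}$ is not ordered, so this may cost passing to $\acl^{eq}(B)$ or to a finite cover. It therefore suffices to treat a $B$-definable single-valued $f:X\to \frac{G}{H}$ with $H=\alpha^{[p^s]}+p^rG$. Write $\Gamma_f\subseteq G^{d+1}$ for the preimage of the graph, i.e.\ the set of $(\bar x,y)$ with $f(\bar x)=y+H$. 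By quantifier elimination relative to the spines, $\Gamma_f$ is a finite boolean combination of three kinds of atomic conditions: order conditions $\sum r_ix_i+ty+b>0$; congruences modulo subgroups $\beta+p^iG$ and $\beta^{[p^\ell]}+p^iG$, with $\beta$ in a spine; and spine conditions on $s_m$ or $t_m$ of terms.

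Second, I will partition $X$ into $B$-definable pieces $X_j$. On each piece, every spine-valued term and every relevant comparison between spine elements and $\alpha$ should have constant behaviour as $y$ varies inside a coset $y_0+H$; for the comparisons between spine elements and $\alpha$ this uses that these relations are definable. The key observation is that $H$ is not convex, and $[G:H]=\infty$ may be assumed, as in the note after \Cref{main}. Hence a coset $y_0+H$ is cofinal in both directions and meets every coset of every convex subgroup. So order atoms and atoms modulo convex groups $\beta$ cannot pin down $y+H$. Only the congruence atoms can cut the fibre $\{y:(\bar x,y)\in\Gamma_f\}$ into a single $H$-coset, and among these only the ones modulo groups $\beta^{[p^\ell]}+p^iG$ or $\beta+p^iG$ that are comparable to $H$, i.e.\ with $\beta$ related to $\alpha$. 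A compactness and saturation argument will be used to discard the remaining atoms: we can move $y$ within its coset of $H$ while keeping all other atoms fixed.

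Third, I will solve for $y$ in the surviving congruences. A congruence $\sum r_ix_i+ty+b\in K$, with $t=p^a t'$ and $p\nmid t'$, becomes after inverting $t'$ modulo $p$-powers a condition of the form $y\in \frac{1}{t}(\ldots)+K'$. Here $K'$ is again of the form $\alpha^{[p^{\ell}]}+p^{i}G$ with $\ell\le s$ and $i\le r$; the shifts come from dividing by $p^a$. This is the step where the correction $k_D$, and hence the functions $f^{H}$ rather than plain projections, appear, following \Cref{Functions to G are linear}. A conjunction of such conditions gives the intersection $\bigcap_t f_{j,t}^{H_{j,t}}$. Negated congruences give the subtracted terms $g_{j,t}^{K_{j,t}}$. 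Single-valuedness of $f$ then forces the index of $\bigcap_t H_{j,t}$ over $H$ to be finite, since otherwise a fibre would contain infinitely many $H$-cosets; that index condition is exactly the finite-index requirement in the statement.

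The main obstacle is the second step. One has to show that atoms modulo groups built from spine elements $\beta\neq\alpha$, and from the convex pieces, really are irrelevant, or can be absorbed into $X_j$. One also has to check that the only quotients that appear are of the form $\alpha^{[p^\ell]}+p^iG$ with $\ell\le s$ and $i\le r$. To do this, I would first try to analyse explicitly the lattice of subgroups $\beta^{[p^\ell]}+p^iG$ as $\beta$ ranges over the spine, using \Cref{def D^{[n]}} and the note that $D\subsetneq D^{[n]}$ implies $D\in\Sn$. The hypothesis that $H\neq D+p^rG$ for any convex $D$ is what should prevent collapse to the convex case of \Cref{main}.
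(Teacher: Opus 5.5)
Your Step~2 is not a detail to settle later. It is the whole content of the theorem, and the mechanism you sketch for it does not work.

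\begin{itemize}
\item \emph{Convex atoms.} The ``key observation'' is false. Since $\alpha\subseteq H$, a coset $y_0+H$ meets only those cosets of $\alpha$ that it contains. The single atom $y-x\in\alpha$ already forces $y+H=x+H$.
\item \emph{Order atoms.} Cofinality of $y_0+H$ does not dispose of them. Two-sided order atoms can confine $y$ to a bounded interval meeting only finitely many $H$-cosets (e.g.\ $x<y<x+2$ forces $y=x+1$ if $1$ is the least positive element of $G$).
\item \emph{Atoms below $H$.} An atom $y\in a+M$ is constant on $H$-cosets only if $H\subseteq M$. This fails for $M$ a convex $\beta\leq\alpha$. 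It fails in general for $M=\beta+p^iG$ with $\beta\leq\alpha$, or $M=\alpha^{[p^\ell]}+p^iG$ with $\ell>s$.
\end{itemize}

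So you cannot ``move $y$ within its $H$-coset keeping all other atoms fixed'', and compactness does not help. These atoms must be \emph{projected}: for each disjunct with fibre $Y=(a_0+M_0)\setminus\bigcup_i(a_i+M_i)$ you must describe $Y+H$ definably. The paper does this in three steps. First, it rewrites the graph as $(\exists z)(y\equiv_{p^r}z\wedge\ldots)$ and eliminates $z$, which leaves only congruence literals. Second, it normalises each literal to pick out exactly one coset; your division by $p^a$ via \Cref{keylemma} is the right move here. Third, it peels off the smallest subgroups one at a time with \Cref{QE 3.2}, where membership in $Y+G'$ is governed by $\sum 1/[M_0\cap G':M_i\cap G']<1$. \Cref{QE 3.3} turns that condition into finitely many definable case distinctions, since all indices are powers of $p$ or infinite.

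Atoms for $\beta>\alpha$ are not ``irrelevant'' either. They are congruences modulo groups containing $H$, so they do constrain $y+H$. They also come with an existential quantifier over spine parameters, which your plan does not address. The paper shows they cannot cut the fibre down to finitely many $H$-cosets on their own, because $[\alpha^{[p^s]}+(\beta\cap p^{r-1}G)+p^rG:H]=\infty$ (\Cref{[b^p^s :a^p^s] infinite}, resting on \Cref{nonconvex conditions} and \Cref{infiniterightquotientsnonconvex}). This is exactly where the hypothesis $H\neq D+p^rG$ is used. Hence a positive congruence modulo some $\alpha^{[p^j]}+p^rG$ must occur. That congruence implies or contradicts every $\beta$-atom, so those atoms, and the quantifier over spine parameters, can be dropped.

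Your closing finite-index argument is a Neumann-type covering argument. It is a reasonable substitute for the paper's induction on the exponent $\ell$, but only once every surviving literal is a congruence modulo some $\alpha^{[p^\ell]}+p^iG$ with $\ell\le s$, $i\le r$. That is precisely what Step~2 leaves unproved.

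Finally, drop Step~1. Branching over $\acl^{eq}(B)$ makes the $X_j,f_{j,t},g_{j,t}$ only $\acl^{eq}(B)$-definable, whereas the statement demands $B$-definability. It is also unnecessary: each piece $(\bigcap_t f^{H_{j,t}}_{j,t})\setminus(\bigcup_t g^{K_{j,t}}_{j,t})$ is already a finite set of $H$-cosets, and the paper treats the $e$-valued graph directly.
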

\noindent These theorems concern functions of the form $f:G^d\rightarrow (\frac{G}{H})^{(e)}$ for various forms of definable subgroup $H\leq G$; we in fact prove some stronger uniform versions of these theorems for definable families of functions $f_H:G^d\rightarrow (\frac{G}{H})^{(e)}$. These stronger versions are straightforward generalisations in the case where $H$ is either convex or of the form $\alpha^{[p^s]}+p^rG$, but the generalisation in the case where $H=D+p^rG$ for some definable convex $D\leq G$ is more immediately similar to \Cref{(theorem) main for a^p^s+p^rG} than \Cref{main}.

\section{Functions to quotients by definable convex subgroups}\label{convex section}
We start with this case as it is the most straightforward of the various possibilities; in this case, we can actually directly prove a uniform version of \Cref{main}, which is as follows:
\begin{theorem}\label{Uniform main for convex}
    For $B\subseteq G$, let $\mathcal{D}$ be a $B$-definable family of convex subgroups of $G$, and $$\mathcal{F}= \left\{f_D:G^d\rightarrow \frac{G}{D}\right\}_{D\in\mathcal{D}}$$ be a uniformly $B$-definable family of functions. Then $\mathcal{F}$ is uniformly piecewise linear.
    More precisely, there exists a $B$-definable family of partitions of $G^d$ into $l$ pieces, $\{X_i^D\}_{i\leq l}$, and $B$-definable linear functions $f_{i}:G^d\rightarrow G$, such that for all $D\in\mathcal{D}$
    \begin{align*}
        f_D = (\bigcup\limits_{i\leq l} f_{i}^D|_{X_i^D}):G^d\rightarrow \frac{G}{D}
    \end{align*}
\end{theorem}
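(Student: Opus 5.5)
The plan is to reduce the family $f_D$ to a single definable function into $G$ via a definable choice of coset representatives, and then apply \Cref{Functions to G are linear} uniformly in the parameter $D$.

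\emph{Step 1: lifting.} Consider the $B$-definable relation $R(D,\bar x,y)$ given by $y\in f_D(\bar x)$, i.e.\ $y+D=f_D(\bar x)$. We want a $B$-definable function $F(\bar x,c)$, with $c$ ranging over a definable set of codes for $D\in\mathcal{D}$, whose value lies in the coset $f_D(\bar x)$. Each coset is convex, so its infimum need not exist in $G$. The approach is to use the quantifier elimination of \cite{QE} to describe the fibres $\{y: R(D,\bar x,y)\}$ as finite unions of intervals intersected with congruence conditions. The endpoints of these intervals are given by terms $\frac1s(\sum r_ix_i+b+k)$, possibly shifted by convex subgroups from the spines. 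From this description we would show that there are finitely many $B$-definable linear terms $t_1,\dots,t_l$ in $\bar x$, together with a $B$-definable partition $X_i^D$ of $G^d$ (uniform in $D$), such that on $X_i^D$ the coset $f_D(\bar x)$ is the coset containing the term at the relevant endpoint, adjusted by a bounded integer $k$.

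\emph{Step 2: the main obstacle.} The terms must not depend on $D$, apart from the shift $k\cdot 1_D$. This is exactly why the notation $f^D$ includes $k_D$. The plan is to argue by compactness and a generic-point argument. Fix a realisation of $\bar x$ and $D$ in a monster model, and consider $y$ with $y+D=f_D(\bar x)$. The coset $f_D(\bar x)$ is $\dcl^{eq}(B,\bar x,\ulcorner D\urcorner)$-definable. The quantifier elimination of \cite{QE} is relative to the spine sorts, and the parameter $D$ is such a sort element, since any definable convex subgroup is coded by spine elements up to finitely many definable operations. Consequently the only way $D$ can enter a term into the main sort is through the quotient $G/D$; in particular it can enter through the element $1_D$ when $G/D$ has a least positive element. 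Concretely, we would show that $\dcl(B,\bar x,\ulcorner D\urcorner)\cap G$ meets the coset $f_D(\bar x)$, up to adding a multiple $k\cdot1_D$ and dividing by an integer. Then \Cref{Functions to G are linear}, applied with the spine parameter, gives the required linear form. If $\dcl$ does not meet the coset directly, the first fallback is to work with the convex hull of the coset. We would look at the type of the cut it determines over $B\bar x$, which by the description of one-types in \cite{QE} is determined by a $B\bar x$-definable term modulo $D$.

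\emph{Step 3: uniformity.} Compactness converts the pointwise statement into finitely many pairs (formula, linear term). For each pair the set of $\bar x$ on which the term $f_i^D(\bar x)$ equals $f_D(\bar x)$ is $B$-definable uniformly in $D$. Disjointifying these sets gives the partition $\{X_i^D\}_{i\le l}$ and proves the theorem. The bound on $k$ in \Cref{Functions to G are linear} ensures that only finitely many shifts $k_D$ occur.
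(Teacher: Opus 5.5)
There is a genuine gap. Your outline is logically sound: a pointwise statement in Step 2, combined with \Cref{Functions to G are linear} and compactness, would give the theorem. The problem is that the entire content of the theorem sits in the sentence ``we would show that $\dcl(B,\bar x,\ulcorner D\urcorner)\cap G$ meets the coset $f_D(\bar x)$, up to adding $k\cdot 1_D$ and dividing by an integer'', and nothing you give proves it. That claim is essentially equivalent to the theorem itself.

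No definable section $G/D\to G$ exists in general. Take $\mathbb{Z}\times\mathbb{Q}$ with the lexicographic order and $D=\{0\}\times\mathbb{Q}$: the maps $(a,q)\mapsto(a,q+\lambda a)$ are automorphisms, so no element of the coset $1_D$ is $\emptyset$-definable. Hence there is no function into $G$ to which \Cref{Functions to G are linear} could be applied until your claim is established.

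The justification you offer is that main-sort terms in the language of \cite{QE} contain no spine parameters, and this does not yield the claim. The value $f_D(\bar x)$ is not a term but an imaginary. It is the fibre of a formula $(\exists\bar\theta)(\xi\wedge\psi)$ whose literals are relations relative to existentially quantified spine elements. You would still have to show that such a formula can only cut out a single $D$-coset through a literal that pins $ry$ down modulo $D$ to a term in $\bar x,\bar b$ (possibly shifted by $1_D$). Your ``fallback'' via the cut determined by the convex hull of the coset only restates what has to be proved.

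The missing idea is to move the problem into the quotient rather than lift it to $G$. The paper uses stable embeddedness of $G/D$ (Section 4 of \cite{StableEmbeddedness}), together with QE and compactness, to rewrite the defining formula as $(\exists\bar\theta)(\xi\wedge\phi(\bar x,\bar b,\bar\theta)\wedge\chi(\pi_D(\bar x),\pi_D(y),\pi_D(\bar b),\pi_D(\bar\theta)))$. Here $\chi$ is a formula of the ordered abelian group $G/D$, which is why the relative Presburger symbols for $G/D$ are added to the language. Inside $G/D$ the function's values are genuine elements, so the proof of \Cref{Functions to G are linear} applies there directly. It gives finitely many equations $r_i\pi_D(y)=s_i(\pi_D(\bar x))+t_i(\pi_D(\bar b))$, with the terms coming from literals of $\chi$ and possibly involving $1_D$. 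These lift to $r_iy\equiv_D s_i(\bar x)+t_i(\bar b)$. Because all terms come from one fixed formula, their independence from $D$ (apart from $k_D$) and the uniformity your Step 3 needs come for free.
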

\begin{note}
    In this formulation, we parameterise the family of definable subgroups by the sequence of imaginaries for the definable subgroups as opposed to parameterising by tuples from the group. Because of this, we may assume that the parameters from the definable family do not appear in formulas except in a location where a group may appear. Note also that we may still apply these theorems in cases where we parameterise by tuples in the model by considering the parameters as additional input variables in the function.
\end{note}
\begin{proof}
    First, note that we may add $1_D, \equiv_D, \equiv_{D,n}, <_D, \pi_D$ to our language, as these are just expansions by definitions; these are symbols for Presburger arithmetic relative to $\frac{G}{D}$ and projection to $\frac{G}{D}$. We may also add a uniform family of predicates $\mathcal{D}_n = \{D_n\}_{D\in\mathcal{D}}$ to each $n$-spine, where $\alpha\in D_n\iff\alpha\leq D$
    \nl\nl
    Next, by \cite{QE}, we may assume (possibly at the cost of splitting into finitely many $B$-definable pieces) that $\mathcal{F}$ is defined by a formula of the form
    $$(\exists\bar\theta)(\xi(\bar\theta, \bar\eta,D)\wedge\psi(\bar x, \pi_D(y), \bar b, \bar\theta))$$
    where $\xi$ is a formula lying purely in $\Spn$ along with the aforementioned uniform family of predicates, and $\psi$ is a conjunction of literals.
    \nl\nl
    Now, by Section 4 of \cite{StableEmbeddedness}, $(D,\frac{G}{D})$ is stably embedded in $G$, and so by compactness (and allowing ourselves to split further into finitely many $B$-definable pieces) we can take this formula to be of the form
    $$(\exists\bar\theta)(\xi(\bar\theta, \bar\eta,D_n)\wedge\phi(\bar x, \bar b, \bar\theta)\wedge \chi(\pi_D(\bar x), \pi_D(y),\pi_D(\bar b), \pi_D(\bar\theta)))$$
    where $\chi$ is a formula in $\frac{G}{D}$ (in particular, here is where we need the inclusion of Presburger arithmetic relative to $\frac{G}{D}$ in the language), and $\pi_H(\bar\theta) = \bar\theta \cup H$.
    We may further take $\chi$ to be of the form given by quantifier elimination relative to the spines.
    \nl\nl
    But now, by the proof of \Cref{Functions to G are linear}, there are a finite number of $\mathbb{Z}$-linear terms $t_1(\bar w), s_1(\bar u), \dots, t_n(\bar w), s_n(\bar u)$, which may include $k_D$, and $r_1,\dots,r_n\in \mathbb{Z}\setminus\{0\}$ such that $\forall \bar w\in\frac{G}{D}, \Delta\in \Spn^{\frac{G}{D}}$ where $\chi(\bar u, v, \bar w, \bar\Delta)$ defines a function $g_D:\frac{G}{D}^{|\bar u|}\rightarrow\frac{G}{D}$, $g(\bar u) = \frac{1}{r_i}(s_i(\bar u) + t_i(\bar w))$ for some $i$. Specifically, these terms come from the inequalities present as literals within $\chi$.
    \nl\nl
    Hence, we see that $\chi(\pi_D(\bar x), \pi_D(y),\pi_D(\bar b), \pi_D(\bar\theta))$ is piecewise given by linear functions of the form
    $$r_i\pi_D(y) = s_i(\pi_D(\bar x)) + t_i(\pi_D(\bar b))$$
    which is equivalent to
    $$r_iy \equiv_D s_i(\bar x) + t_i(\bar b)$$
    But then we may take $f_i(\bar x) = \frac{1}{r_i}(s_i(\bar x) + t_i(\bar b))$ and then $f_D$ can be uniformly decomposed into a finite number of pieces, each of which is given by a uniform projection of $f_i$ as required.

\end{proof}

\section{Functions to quotients by subgroups of the form $\alpha^{[p^s]}+p^rG$, with $\alpha\in \Sn$ for some $n$}\label{alpha^{[p^s]}+p^rG section}

In this section, we prove a uniform version of \Cref{(theorem) main for a^p^s+p^rG}, which turns out to be a rather direct uniform adaptation. We do this case first since proving the uniform version for subgroups of the form $D+p^rG$ with $D$ convex uses an induction in which it is helpful to be able to reference this in the inductive step.
\nl\nl
We start with noting a useful reduction: we may assume $r\leq s$, since by \cite{QE} if $r<s$ then $\alpha^{[p^s]}+p^rG = \alpha^{[p^r]}+p^rG$. We also have some important preliminary lemmas to prove, starting with a lemma that is key in understanding the behaviour os subgroups of the form $\alpha^{[p^s]}$. We follow this by proving that if there is any convex subgroup $D$ such that $\alpha^{[p^s]}+p^rG = D+p^rG$, then we can find such a $D$ that is definable, and so definable functions to $\frac{G}{\alpha^{[p^s]}+p^rG}$ are covered by \Cref{D+p^rG section}; hence, we may safely assume that $\alpha^{[p^s]}+p^rG$ is not of this form.
\begin{lemma}\label{keylemma}
	Let $D\leq G$ be a convex subgroup, and $s\geq r$. Then $$D^{[p^s]}\cap p^rG = p^rD^{[p^{s-r}]}$$
\end{lemma}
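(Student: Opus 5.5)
The plan is to reduce the statement to a computation for a single convex subgroup, and then pass to the intersection in \Cref{def D^{[n]}}. Two elementary facts about an oag $G$ do the work. First, $G$ is torsion-free, so the map $x\mapsto p^rx$ is an injective endomorphism of $G$. Second, for any convex $H\leq G$ we have $H\cap p^rG = p^rH$. The inclusion $\supseteq$ is trivial. For $\subseteq$, if $p^rz\in H$ then $0\leq |z|\leq |p^rz|$, so $z\in H$ by convexity.

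\textbf{Step 1 (single convex subgroup).} For convex $H\leq G$ and $s\geq r$, I will show
\begin{align*}
(H+p^sG)\cap p^rG = p^r(H+p^{s-r}G).
\end{align*}
For $\supseteq$: an element $p^rh+p^sg$ lies in $p^rG$, and it lies in $H+p^sG$ since $p^rh\in H$.

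For $\subseteq$: suppose $x=h+p^sg=p^ry$ with $h\in H$. Then $h=p^r(y-p^{s-r}g)\in H\cap p^rG=p^rH$, so $h=p^rh'$ for some $h'\in H$. This gives $p^ry=p^r(h'+p^{s-r}g)$. Torsion-freeness then yields $y=h'+p^{s-r}g\in H+p^{s-r}G$, so $x\in p^r(H+p^{s-r}G)$.

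\textbf{Step 2 (intersection).} By \Cref{def D^{[n]}}, with $H$ ranging over convex subgroups with $D\leq H\leq G$, we have
\begin{align*}
D^{[p^s]}\cap p^rG=\bigcap_{H}\big((H+p^sG)\cap p^rG\big)=\bigcap_H p^r(H+p^{s-r}G).
\end{align*}
Since $x\mapsto p^rx$ is injective, it commutes with arbitrary intersections of subsets. Hence the last expression equals $p^r\bigcap_H(H+p^{s-r}G)=p^rD^{[p^{s-r}]}$, as required.

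In the boundary case $s=r$ we read $D^{[p^0]}=D^{[1]}=\bigcap_H(H+G)=G$. The statement then becomes $D^{[p^r]}\cap p^rG=p^rG$, which is consistent with the computation above since $p^rG\subseteq H+p^rG$ for every $H$.

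The only step with real content is the convexity fact $H\cap p^rG=p^rH$ used in Step 1. Everything else is formal bookkeeping with torsion-freeness. One could instead use the $\Sn$-indexed description of $D^{[n]}$ from \cite{QE}, but that is unnecessary: the definition via all convex $H\supseteq D$ already gives the result directly.
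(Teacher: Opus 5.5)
Your proof is correct and is essentially the paper's argument: the paper also intersects $p^rG$ termwise with each $\beta+p^sG$. It rewrites this as $(p^rG\cap\beta)+p^sG=p^r\beta+p^sG=p^r(\beta+p^{s-r}G)$ using the modular law and purity of convex subgroups (your Step 1 is an element-chasing version of exactly this), and then pulls $p^r$ out of the intersection by torsion-freeness. The only difference is the index family: the paper intersects over $\beta\in\mathcal{S}_p$ with $\beta>D$ (the description from \cite{QE}), while you use the defining family of convex $H\supseteq D$, and since your computation is termwise it works for any common index family and spares you from checking that $D^{[p^s]}$ and $D^{[p^{s-r}]}$ are computed over the same set of spine elements.
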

\begin{proof}
	\begin{align*}
		 D^{[p^s]}\cap p^rG  &= p^rG\cap\bigcap\limits_{\substack{ \beta > D\\\beta\in \mathcal{S}_p}}\beta+p^{s}G \\
		&= \bigcap\limits_{\substack{ \beta > D\\\beta\in \mathcal{S}_p}}(p^rG\cap\beta)+p^{s}G & \tx{since } p^sG\subseteq p^rG\\
		&= \bigcap\limits_{\substack{ \beta > D\\\beta\in \mathcal{S}_p}}p^r\beta+p^{s}G & \tx{since }\beta\tx{ is a pure subgroup} \\
		&= \bigcap\limits_{\substack{ \beta > D\\\beta\in \mathcal{S}_p}}p^r(\beta+p^{s-r}G) \\
		&= p^r\bigcap\limits_{\substack{ \beta > D\\\beta\in \mathcal{S}_p}}\beta+p^{s-r}G & \tx{since }G\tx{ is torsion-free} \\
		&= p^rD^{[p^{s-r}]}
	\end{align*}
	
	
\end{proof}
\begin{lemma}\label{convex implies definable convex}
	Let $\alpha\in\Sn$. If $\alpha^{[p^s]}= D+p^sG$ for some $D\leq G$ convex, then there is some $\alpha$-definable convex $D\leq G$ such that $\alpha^{[p^s]} = D +p^sG$.
\end{lemma}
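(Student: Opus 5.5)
The plan is to take $D$ to be the largest convex subgroup contained in $\alpha^{[p^s]}$ and to check that it is $\alpha$-definable and has the required property. Write $A = \alpha^{[p^s]}$. By \Cref{def D^{[n]}} (the description from \cite{QE}),
\begin{align*}
A = \bigcap\limits_{\substack{\alpha\leq \beta\\ \beta\in\mathcal{S}_{p^s}}}\beta+p^sG .
\end{align*}
The spine $\mathcal{S}_{p^s}$ is an imaginary sort, and membership $x\in\beta+p^sG$ is uniformly definable in $x$ and $\beta$. So $x\in A$ is expressed by the formula $(\forall \beta\in\mathcal{S}_{p^s})(\alpha\leq\beta \rightarrow x\in \beta+p^sG)$, and $A$ is $\alpha$-definable. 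Note also that $\alpha\subseteq A$ and $p^sG\subseteq A$.

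Next, define
\begin{align*}
C = \{x\in G : (\forall y)(|y|\leq |x| \rightarrow y\in A)\},
\end{align*}
which is $\alpha$-definable because $A$ is. I will check three things.
\begin{enumerate}[(i)]
\item $C$ is convex and symmetric. This is immediate from the definition, and $0\in C$ since $0\in A$.
\item $C$ is closed under addition. It suffices to show that $x\in C$ implies $2|x|\in C$, as convexity then handles general sums. Take $0\leq y\leq 2|x|$ and set $y_1=\min(y,|x|)$ and $y_2 = y-y_1$. Then $0\leq y_1,y_2\leq |x|$, so $y_1,y_2\in A$, and hence $y\in A$ because $A$ is a subgroup. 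The case of negative $y$ follows by symmetry.
\item Every convex subgroup $E$ with $E\subseteq A$ satisfies $E\subseteq C$, directly from the definition.
\end{enumerate}
Thus $C$ is the largest convex subgroup contained in $A$, and it is $\alpha$-definable.

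Now suppose $A = D+p^sG$ for some convex $D$. Then $D\subseteq A$, so $D\subseteq C$ by (iii). This gives
\begin{align*}
A = D+p^sG\subseteq C+p^sG\subseteq A+p^sG = A,
\end{align*}
where the last equality uses $p^sG\subseteq A$. Hence $A = C+p^sG$ with $C$ convex and $\alpha$-definable, as required.

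I expect the only real subtlety to be the $\alpha$-definability of $A$, which relies on the \cite{QE} description of $D^{[n]}$ as an intersection indexed by the definable sort $\mathcal{S}_{p^s}$, rather than the original intersection over all convex subgroups. The remaining steps are elementary ordered-group arguments.
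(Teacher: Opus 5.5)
Your proof is correct, and it takes a genuinely different route from the paper's. The paper invokes Lemma~2.4 of \cite{QE} to split into two cases, according to whether some $\beta\in\mathcal{S}_p$ with $\beta>\alpha$ already satisfies $\beta+p^sG=\alpha^{[p^s]}$. It then names $D$ explicitly through the spine: $D=\bigcap_{\beta>\alpha}\beta$ if no such $\beta$ exists, and otherwise $D$ is the union of all such $\beta$. You instead take the canonical candidate, namely the largest convex subgroup $C$ contained in $\alpha^{[p^s]}$. The chain $D+p^sG\subseteq C+p^sG\subseteq\alpha^{[p^s]}$ then shows that $C$ works as soon as any convex $D$ does.

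Your approach buys several things:
\begin{enumerate}[(1)]
\item There is no case distinction.
\item The only input you need from \cite{QE} is that $\alpha^{[p^s]}$ is $\alpha$-definable.
\item You verify the identity $\alpha^{[p^s]}=C+p^sG$ directly. In the paper's first case, the check that $\bigcap_{\beta>\alpha}\beta$ works is left to the reader, and that is exactly where the hypothesis is needed.
\item You prove a more general fact: any subgroup $A\supseteq p^sG$ of the form $D+p^sG$ with $D$ convex is of that form with $D$ definable over the same parameters as $A$.
\end{enumerate}
In exchange, the paper's route gives an explicit description of $D$ in terms of $\mathcal{S}_p$, in the same vocabulary as \Cref{nonconvex conditions}.

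One cosmetic point: the intersection describing $\alpha^{[p^s]}$ should run over $\beta>\alpha$ strictly, as in the proof of \Cref{keylemma}. Your displayed formula inherits the $\leq$ from the paper's definition. This is harmless, since your argument only uses that $\alpha^{[p^s]}$ is an $\alpha$-definable subgroup containing $p^sG$.
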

\begin{proof}
	By Lemma~2.4 of \cite{QE}, we have that either:
	\begin{align}
		&(\forall \beta\in \mathcal{S}_p)(\beta > \alpha \rightarrow (\beta+p^sG \neq\alpha^{[p^s]})) \\
		&(\exists\beta\in \mathcal{S}_p)(\beta > \alpha \wedge \beta + p^sG = \alpha^{[p^s]}
	\end{align}
	\setcounter{equation}{0}
	And in case (1), we may define $D$ by
	$$D = \bigcap\limits_{\substack{\beta > \alpha\\\beta\in \mathcal{S}_p }}\beta$$
	while in case (2) we may take $D$ to be defined as:
	$$D = \bigcup\limits_{\substack{\beta > \alpha\\\beta\in \mathcal{S}_p \\ \beta + p^sG = \alpha^{[p^s]}}}\beta$$
\end{proof}
\noindent
Now, we prove an easy to check condition which determines whether $\alpha^{[p^s]}$ is of the form $D+p^sG$ for some convex $D$, which will be used later to transfer this result between different values of $s$; this will then be useful for studying certain quotients involving $\alpha^{[p^s]}+p^rG$. It is worth noting that if $\alpha^{[p^s]} = D+p^sG$, then $\alpha^{[p^s]}+p^rG = D+p^rG$ since $r\leq s$, and since we are in the case where $\alpha^{[p^s]}+p^rG\neq D+p^rG$ for any convex subgroup $D$, the same holds for $\alpha^{[p^s]}$.

\begin{lemma}\label{nonconvex conditions}
	Suppose $\alpha\leq G$ is convex, then $\alpha^{[p^s]}\neq D+p^sG$ for any convex $D\leq G$ iff all of the following hold:
	\begin{enumerate}[(1)]
		\item $(\forall \gamma > \alpha)(\gamma\in \mathcal{S}_{p^s} \rightarrow (|\{\beta+p^sG :\beta\in\mathcal{S}_{p^s}\wedge (\gamma>\beta>\alpha)\}| =\infty))$
		\item $\alpha = \bigcap\limits_{\substack{\beta> \alpha \\ \beta \in \mathcal{S}_{p^s}}}\beta$
		\item $\alpha = s_{p^s}(a)$ for some $a\in G$
	\end{enumerate}
	
\end{lemma}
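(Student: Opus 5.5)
We prove the lemma by contraposition in both directions: we show that $\alpha^{[p^s]}=D+p^sG$ for some convex $D$ if and only if at least one of (1), (2), (3) fails. Throughout we use the description
\[
\alpha^{[p^s]}=\bigcap_{\substack{\beta\geq\alpha\\ \beta\in\mathcal{S}_{p^s}}}(\beta+p^sG)
\]
from \Cref{def D^{[n]}}, together with the fact that $\{\beta+p^sG\}$ is a chain decreasing in $\beta$. By the Note after \Cref{def D^{[n]}}, if $\alpha\subsetneq\alpha^{[p^s]}$ then $\alpha\in\mathcal{S}_{p^s}$.

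\textbf{Failure of a condition gives convexity.} Each condition is handled separately.
\begin{itemize}
\item If (3) fails, then $\alpha\notin\mathcal{S}_{p^s}$. By the Note this forces $\alpha^{[p^s]}\subseteq\alpha+p^sG$, so $\alpha^{[p^s]}=\alpha+p^sG$ and we may take $D=\alpha$.
\item If (2) fails, let $D_0=\bigcap_{\beta>\alpha,\,\beta\in\mathcal{S}_{p^s}}\beta\supsetneq\alpha$. Every $\beta\in\mathcal{S}_{p^s}$ with $\beta\geq\alpha$ is then either $\alpha$ or contains $D_0$. Hence $\alpha^{[p^s]}=(\alpha+p^sG)\cap D_0^{[p^s]}$. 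We would like to compare this with $D_0$. Take $g\in\alpha^{[p^s]}$. If $g\notin \alpha+p^sG$ we immediately get a contradiction. Using that $\alpha$ is the only spine element strictly between $\alpha$ and $D_0$, one should get $\alpha^{[p^s]}=\alpha+p^sG$ or $=D_0+p^sG$. The exact case analysis of which of these occurs needs care.
\item If (1) fails, fix $\gamma>\alpha$ in $\mathcal{S}_{p^s}$ with only finitely many distinct classes $\beta+p^sG$ for $\alpha<\beta<\gamma$. We then split according to whether the chain $\beta+p^sG$, for $\beta\leq\gamma$ near $\alpha$, stabilises.
  \begin{itemize}
  \item If it stabilises, the smallest such class is $\beta_0+p^sG$. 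The intersection then equals $(\alpha+p^sG)\cap(\beta_0+p^sG)=\alpha+p^sG$, unless $\alpha$ is excluded.
  \item More carefully: the $\beta\leq\gamma$ contribute only finitely many cosets-groups, so the intersection over $\beta>\alpha$ is attained at a least class $\beta_1+p^sG$. Then $\alpha^{[p^s]}=\beta_1+p^sG$, a convex group plus $p^sG$. The $\beta$ above $\gamma$ are irrelevant because the chain decreases.
  \end{itemize}
\end{itemize}

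\textbf{All three conditions give non-convexity.} Assume (1), (2), (3) and suppose $\alpha^{[p^s]}=D+p^sG$ with $D$ convex. By \Cref{convex implies definable convex} (via Lemma~2.4 of \cite{QE}) there are two cases.
\begin{itemize}
\item In case (2) of that lemma, $\alpha^{[p^s]}=\beta+p^sG$ for some $\beta>\alpha$ in $\mathcal{S}_p$. Condition (1), applied with $\gamma=\beta$ or the next element of $\mathcal{S}_{p^s}$, yields some $\beta'$ strictly between with $\beta'+p^sG\subsetneq\beta+p^sG$. This contradicts $\alpha^{[p^s]}\subseteq\beta'+p^sG$.
\item In case (1) of that lemma, $D=\bigcap_{\beta>\alpha}\beta$, and (2) gives $D=\alpha$. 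So $\alpha^{[p^s]}=\alpha+p^sG$. Using (3), pick $a$ with $s_{p^s}(a)=\alpha$, so $a\notin\alpha+p^sG$ but $a\in\beta+p^sG$ for all $\beta>\alpha$. Then $a\in\alpha^{[p^s]}$ via the first description, which is a contradiction.
\end{itemize}

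\textbf{Main obstacle.} The hardest step will be the failure-of-(1) and failure-of-(2) directions. There one must check that the finitely many relevant classes genuinely produce a minimum expressible as $\beta+p^sG$ or $\alpha+p^sG$. This requires careful handling of the preorder on $\mathcal{S}_{p^s}$ versus $\mathcal{S}_p$, i.e.\ distinct $\beta$ giving equal $\beta+p^sG$.
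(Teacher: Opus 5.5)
There is a genuine gap in the failure-of-(2) case, and it traces back to your working description $\alpha^{[p^s]}=\bigcap_{\beta\geq\alpha}(\beta+p^sG)$. The non-strict inequality in \Cref{def D^{[n]}} is a slip. The group the lemma is about (and the one used in \Cref{keylemma} and in the paper's proof) is the intersection over $\beta>\alpha$, equivalently over convex $H\supsetneq\alpha$.

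Read literally, your description gives $\alpha^{[p^s]}=\alpha+p^sG$ as soon as $\alpha\in\mathcal{S}_{p^s}$, i.e.\ as soon as (3) holds. That makes the left-hand side of the lemma incompatible with (3). It also makes your final step, ``$a\in\alpha^{[p^s]}$ via the first description'', false. You in fact alternate between the two readings: compare the ``unless $\alpha$ is excluded'' sub-bullet with the ``more carefully'' one. This is why the failure-of-(2) case ends with ``one should get $\alpha^{[p^s]}=\alpha+p^sG$ or $=D_0+p^sG$ \ldots\ needs care'' instead of a proof. The justification you offer, that ``$\alpha$ is the only spine element strictly between $\alpha$ and $D_0$'', does not make sense as stated.

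The missing idea is short, and it is exactly what the paper uses. Under the strict reading, $D_0$ is itself a convex subgroup strictly containing $\alpha$. So it is one of the groups in the defining intersection, and $\alpha^{[p^s]}\subseteq D_0+p^sG$. Conversely, $D_0\subseteq\beta$ for every $\beta\in\mathcal{S}_{p^s}$ with $\beta>\alpha$, so the spine description (Lemma~2.4 of \cite{QE}) gives $D_0+p^sG\subseteq\alpha^{[p^s]}$. Hence $\alpha^{[p^s]}=D_0+p^sG$ outright, with no case analysis.

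Once the strict reading is fixed throughout, the rest of your proposal is sound. Your failure-of-(1) argument (taking the minimum $\beta_1+p^sG$ of a finite chain) and your failure-of-(3) argument are the paper's. Your converse is valid but takes a different route. You go through the dichotomy of \Cref{convex implies definable convex}, using from its proof that in its case (1) the witness can be taken to be $\bigcap_{\beta>\alpha}\beta$; this is true, but the paper only asserts it. The paper instead argues directly:
\begin{itemize}
\item (1) gives $\alpha^{[p^s]}\subsetneq\beta+p^sG$ for every spine $\beta>\alpha$;
\item (2) places such a $\beta$ inside every convex $D\supsetneq\alpha$;
\item the witness $a$ from (3) rules out every convex $D\subseteq\alpha$.
\end{itemize}
The direct route is self-contained, whereas yours leans on that sketched step.
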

\begin{note}
	By Lemma~2.2 of \cite{QE}, we may replace $\mathcal{S}_{p^s}$ by $\mathcal{S}_p$, and $s_{p^s}(a)$ by $s_p(a)$
\end{note}
\begin{proof}
	First, let $H = \bigcap\limits_{\substack{\beta> \alpha \\ \beta \in \mathcal{S}_p}}\beta$. Note that $H+p^sG\subseteq \alpha^{[p^s]}$ by Lemma~2.4 of \cite{QE}. Also, if $H \neq \alpha$, $\alpha^{[p^s]}\subseteq H+p^sG $ by the definition of $\alpha^{[p^s]}$.
	\nl\nl
	Now, suppose that $\alpha^{[p^s]}\neq D+p^sG$ for any convex $D\leq G$. Then we must have $H = \alpha$, and so (2) holds. Also, (1) must hold, since if not then $\bigcap\limits_{\substack{\beta > \alpha\\\beta\in \mathcal{S}_p}}\beta+p^sG$ is equivalent to a finite intersection of descending groups, and is hence equal to one of these groups, but then $\alpha^{[p^s]} = \beta + p^sG$. Finally, let $x\in \alpha^{[p^s]}\setminus (\alpha+p^sG)$. Then for $\beta > \alpha$, $x\in \beta +p^sG$ by the definition of $\alpha^{[p^s]}$, so $s_{p^s}(x) = \alpha$.
	\nl\nl
	Conversely, (1) implies that for $\beta \in \mathcal{S}_p,\beta > \alpha$, $\alpha^{[p^s]}\subsetneq \beta+p^sG$, and (2) implies that if $D > \alpha$ is convex, then there is some $\beta\in \mathcal{S}_p, \alpha < \beta \leq D$, and so $\alpha^{[p^s]}\subsetneq\beta+p^sG\subseteq D+p^sG$. Also, if $s_{p^s}(a) = \alpha$, then $a\in \alpha^{[p^s]}$, but also $a\notin D+p^sG$ for any $D\leq G$ convex with $D\leq \alpha$. Hence, $\alpha^{[p^s]}\neq D+p^sG$ for any $D\leq G$ convex.
\end{proof}

\begin{lemma}\label{equal p-divisibility}
	Let $L\leq K\leq G$ be convex, $r\geq 1$. Then $$L + pG = K+pG\iff L+p^rG = K + p^rG$$
\end{lemma}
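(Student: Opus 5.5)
The plan is to prove both directions; the backward one is immediate. If $L+p^rG = K+p^rG$, add $pG$ to both sides. Since $p^rG\subseteq pG$, this gives $L+pG = L+p^rG+pG = K+p^rG+pG = K+pG$.

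For the forward direction I will induct on $r$, with $r=1$ being the hypothesis. Assume $L+pG=K+pG$ and $L+p^{r}G=K+p^{r}G$. Since $L\le K$, the inclusion $L+p^{r+1}G\subseteq K+p^{r+1}G$ is automatic, so it suffices to show $K\subseteq L+p^{r+1}G$. Take $k\in K$. By the inductive hypothesis, $k = l + p^r g$ with $l\in L$ and $g\in G$. Then $p^r g = k-l\in K$. Because $K$ is convex, it is a pure subgroup of a torsion-free group; this is the same purity fact used in \Cref{keylemma}. Hence $g\in K$.

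Now apply the case $r=1$ to $g$: write $g = l' + p h$ with $l'\in L$ and $h\in G$. Substituting gives
\[
k = l + p^r l' + p^{r+1}h \in L + p^{r+1}G,
\]
which is what we need, since $l+p^rl'\in L$. This completes the induction.

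The only point requiring care is the purity step: deducing $g\in K$ from $p^rg\in K$. It holds because a convex subgroup is pure. If $g\notin K$, then, replacing $g$ by $-g$ if necessary, $g>K$ and $g>0$. It follows that $p^rg\ge g>K$, contradicting $p^rg\in K$. Everything else is bookkeeping with the two decompositions. I expect no real obstacle.
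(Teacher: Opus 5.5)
Your proof is correct and is essentially the paper's argument: both induct on $r$, and the only real input is purity of a convex subgroup, i.e.\ $p^rg\in K\Rightarrow g\in K$. The difference is presentation. The paper phrases the inductive step through quotients: the natural surjection $\frac{L+p^iG}{L+p^{i+1}G}\twoheadrightarrow\frac{L+p^iG}{K+p^{i+1}G}$ is forced to be injective because both sides are compatibly isomorphic to $\frac{G}{L+pG}$. You instead chase elements directly, and you also record the trivial converse, which the paper leaves implicit.
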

\begin{proof}
	First, suppose that $L+pG = K + pG$. We prove inductively that $K+p^rG= L+p^rG$. Note that
	$$\frac{G}{K+pG}\cong\frac{K+p^iG}{K+p^{i+1}G}$$ via the map $$x+K+pG\mapsto p^ix + K+p^{i+1}G$$
	which is injective as $K$ is convex, hence pure. It is also clearly surjective. Then, by induction we get that
	\begin{align*}
		\frac{G}{L+pG}&\cong \frac{L+p^iG}{K+p^{i+1}G}\\
		f:x+L+pG&\mapsto p^ix + K+p^{i+1}G
	\end{align*}
	But we also have that
	\begin{align*}
		\frac{G}{L+pG}&\cong \frac{L+p^iG}{L+p^{i+1}G}\\
		g:x+L+pG&\mapsto p^ix + L+p^{i+1}G
	\end{align*}
	As well as the surjection
	\begin{align*}
		\frac{L+p^iG}{L+p^{i+1}G}&\twoheadrightarrow\frac{L+p^iG}{K+p^{i+1}G} \\
		h:x + L+p^{i+1}G &\mapsto x + K + p^{i+1}G
	\end{align*}
	But then $h\circ g = f$, and so $h$ must be an isomorphism, and hence we conclude that $L+p^{i+1}G=K+p^{i+1}G$.
\end{proof}

\begin{corollary}\label{same number above}
	Suppose $\alpha< D\leq G$ are convex, then
	\begin{align*}
		|\{\beta+pG :\beta\in\mathcal{S}_p\wedge D>\beta>\alpha\}|= |\{\beta+p^sG :\beta\in\mathcal{S}_p\wedge D>\beta>\alpha\}|
	\end{align*}
\end{corollary}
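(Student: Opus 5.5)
We will show that the map $\beta+pG\mapsto\beta+p^sG$, defined on the set of $\beta\in\mathcal{S}_p$ with $D>\beta>\alpha$, is a well-defined bijection between the two sets. Equal cardinality, finite or infinite, then follows at once.

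The key input is \Cref{equal p-divisibility}. Take $\beta,\beta'\in\mathcal{S}_p$ with $\alpha<\beta,\beta'<D$. Elements of $\mathcal{S}_p$ are convex subgroups, and the convex subgroups are totally ordered by inclusion. So after possibly swapping the two we may assume $\beta\leq\beta'$. Applying \Cref{equal p-divisibility} with $L=\beta$, $K=\beta'$ and $r=s$ gives
\begin{align*}
\beta+pG=\beta'+pG\iff\beta+p^sG=\beta'+p^sG.
\end{align*}
The forward direction says the map is well defined: if two representatives give the same coset $\beta+pG=\beta'+pG$, they give the same $\beta+p^sG$. The backward direction says the map is injective. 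Surjectivity is immediate, because the target set is indexed by exactly the same family of $\beta$'s.

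The argument is routine, and we do not expect a serious obstacle. The only point to check is that \Cref{equal p-divisibility} needs its two subgroups comparable under inclusion. This holds automatically for convex subgroups of an ordered abelian group, so the lemma applies to any pair of indices.

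Finally, by the note following \Cref{nonconvex conditions} (Lemma~2.2 of \cite{QE}), $\mathcal{S}_p$ can be replaced by $\mathcal{S}_{p^s}$ in the index sets without changing anything. This is the form in which the corollary is used when transferring condition (1) of \Cref{nonconvex conditions} between different values of $s$.
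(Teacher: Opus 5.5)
Your proof is correct and is the same as the paper's, which simply declares the corollary immediate from \Cref{equal p-divisibility}; you make explicit that, because convex subgroups are totally ordered by inclusion, the lemma shows $\beta+pG\mapsto\beta+p^sG$ is a well-defined bijection. Well-definedness uses the nontrivial direction of the lemma proved in the paper, while injectivity uses the trivial one (add $pG$ to both sides).
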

\begin{proof}
	Immediate from \Cref{equal p-divisibility}
\end{proof}
\begin{corollary}
	Let $\alpha\in \mathcal{S}_p$. Then $\alpha^{[p^s]} \neq D+p^sG$ for any convex $D\leq G$ iff $\alpha^{[p]} \neq D+pG$ for any convex $D\leq G$
\end{corollary}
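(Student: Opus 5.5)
The plan is to reduce both sides to the three conditions of \Cref{nonconvex conditions} and check that each condition does not depend on $s$. By that lemma, $\alpha^{[p^s]}\neq D+p^sG$ for every convex $D$ holds exactly when (1), (2) and (3) hold with $p^s$ in place of the exponent. The same lemma with $s=1$ says that $\alpha^{[p]}\neq D+pG$ for every convex $D$ holds exactly when (1), (2) and (3) hold with $p$. By the note after \Cref{nonconvex conditions} (Lemma~2.2 of \cite{QE}), in each version we may replace $\mathcal{S}_{p^s}$ by $\mathcal{S}_p$ and $s_{p^s}$ by $s_p$. It therefore suffices to show that each of the three conditions, written with $\mathcal{S}_p$, is equivalent for exponent $p$ and exponent $p^s$.

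Conditions (2) and (3), after this replacement, no longer mention $s$. Condition (2) becomes $\alpha=\bigcap_{\beta>\alpha,\,\beta\in\mathcal{S}_p}\beta$, and (3) becomes $\alpha=s_p(a)$ for some $a\in G$. So these two are literally the same statement on both sides, and there is nothing to prove.

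Condition (1) is the only one that still depends on $s$: it asks that for every $\gamma>\alpha$ with $\gamma\in\mathcal{S}_p$, the set $\{\beta+p^sG:\beta\in\mathcal{S}_p,\ \gamma>\beta>\alpha\}$ is infinite. Apply \Cref{same number above} with $D=\gamma$ and $\alpha<\gamma$ convex. It says this set has the same cardinality as $\{\beta+pG:\beta\in\mathcal{S}_p,\ \gamma>\beta>\alpha\}$. Hence the set for $p^s$ is infinite iff the set for $p$ is infinite, for every such $\gamma$, and condition (1) for $p^s$ is equivalent to condition (1) for $p$.

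The one point needing care is the use of Lemma~2.2 of \cite{QE} in condition (1). That replacement changes the index set $\mathcal{S}_{p^s}$ to $\mathcal{S}_p$ in both the quantifier over $\gamma$ and the set of $\beta$'s. This is legitimate because the two spines list the same convex subgroups. Once both sides range over $\mathcal{S}_p$, \Cref{same number above} applies directly. Combining the three equivalences with \Cref{nonconvex conditions} gives the corollary.
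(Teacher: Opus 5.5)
Your proposal is correct and is essentially the paper's own proof. The paper likewise reduces both sides to the three conditions of \Cref{nonconvex conditions}, uses \Cref{same number above} (with $D=\gamma$) to see that condition (1) does not depend on $s$, and uses the note after that lemma (Lemma~2.2 of \cite{QE}) to see that conditions (2) and (3) do not depend on $s$ either.
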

\begin{proof}
	By \Cref{same number above}, condition (1) of \Cref{nonconvex conditions} is independent of $s$. By the note after \Cref{nonconvex conditions}, so are (2) and (3).
\end{proof}


\begin{lemma}\label{infiniterightquotientsnonconvex}
	Let $\alpha <D\leq G$ be convex, $\alpha^{[p^s]}\neq K +p^sG$ for any convex $K\leq G$, $s\geq r\geq 1$. Then 
	\begin{align*}
		\infty=[\alpha^{[p^s]}+p^{r-1}D:\alpha^{[p^s]}+p^rD] &= [D:(\alpha^{[p^{s-r+1}]}\cap D)+pD] \\
		&\leq [D:(\alpha^{[p^s]}\cap D)+pD] = \infty
	\end{align*}
	In particular, $[\alpha^{[p^s]}+p^{r-1}G:\alpha^{[p^s]}+p^rG] =\infty$
\end{lemma}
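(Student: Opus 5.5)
The plan is to prove the displayed chain one piece at a time, writing $A=\alpha^{[p^s]}$ and $s'=s-r+1\geq 1$: first the equality, then the inequality, then the infinitude. The "in particular" then follows by taking $D=G$. This is legitimate because $\alpha<G$: if $\alpha=G$ then $A=G=G+p^sG$, contradicting the hypothesis.

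\textbf{The equality.} Consider the map $\phi:D\to \frac{A+p^{r-1}D}{A+p^rD}$ given by $x\mapsto p^{r-1}x+A+p^rD$. It is a surjective homomorphism, so it suffices to show $\ker\phi=(\alpha^{[p^{s'}]}\cap D)+pD$.
\begin{itemize}
\item For $\supseteq$: first, $p\cdot p^{r-1}D=p^rD$, so $pD\subseteq\ker\phi$. Second, if $x\in\alpha^{[p^{s'}]}\cap D$, then $p^{r-1}x\in p^{r-1}\alpha^{[p^{s'}]}=A\cap p^{r-1}G\subseteq A$ by \Cref{keylemma}, applied with $r-1$ in place of $r$ (when $r=1$ this is trivial).
\item For $\subseteq$: let $x\in\ker\phi$. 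Then $p^{r-1}x=a+p^ry$ with $a\in A$ and $y\in D$. So $p^{r-1}(x-py)=a\in A\cap p^{r-1}G=p^{r-1}\alpha^{[p^{s'}]}$, again by \Cref{keylemma}. Since $G$ is torsion-free, $x-py\in\alpha^{[p^{s'}]}$. Also $x-py\in D$, so $x\in(\alpha^{[p^{s'}]}\cap D)+pD$.
\end{itemize}

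\textbf{The inequality.} Since $p^sG\subseteq p^{s'}G$, we have $A\subseteq\alpha^{[p^{s'}]}$ directly from \Cref{def D^{[n]}}. Hence $(A\cap D)+pD\subseteq(\alpha^{[p^{s'}]}\cap D)+pD$, so the index of the smaller subgroup in $D$ is at least the index of the larger one. This gives the inequality.

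\textbf{Infinitude.} I expect this to be the main obstacle. It suffices to show $[D:(\alpha^{[p^{s'}]}\cap D)+pD]=\infty$, since the right-hand index is then at least as large.
\begin{itemize}
\item By the corollary following \Cref{same number above}, the non-convexity hypothesis transfers from $s$ to $s'$. So conditions (1)–(3) of \Cref{nonconvex conditions} hold for $s'$, and by the note we may use $\mathcal{S}_p$.
\item By (2), since $D>\alpha$ is convex there is $\gamma\in\mathcal{S}_p$ with $\alpha<\gamma\leq D$. By (1) and \Cref{same number above}, there are infinitely many $\beta_i\in\mathcal{S}_p$ with $\alpha<\beta_i<\gamma$ and the $\beta_i+pG$ pairwise distinct.
\item Each $\beta_i$ contains $(\alpha^{[p^{s'}]}\cap D)+pD$ in $\beta_i+pD$. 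Indeed, take $x\in D$ with $x\in\beta_i+p^{s'}G$, say $x=b+p^{s'}g$ with $b\in\beta_i$. Then $p^{s'}g=x-b\in D$. Since $D$ is convex, hence pure, $g\in D$, so $x\in\beta_i+pD$.
\item The subgroups $\beta_i+pD\leq D$ are pairwise distinct, since $\beta_i+pD=\beta_j+pD$ would give $\beta_i+pG=\beta_j+pG$.
\item The $\beta_i$ are nested, as they are convex subgroups. So $D$ contains an infinite strictly monotone chain of subgroups, all containing $(\alpha^{[p^{s'}]}\cap D)+pD$. This forces the index to be infinite.
\end{itemize}

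The step I would check most carefully is the use of purity of the convex subgroup $D$, i.e. $p^{s'}g\in D$ implies $g\in D$. This holds because $0<|g|\leq|p^{s'}g|$ and $D$ is convex.
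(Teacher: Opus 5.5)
Your proof is correct and follows the same route as the paper. You identify the quotient with $D/((\alpha^{[p^{s-r+1}]}\cap D)+pD)$ via \Cref{keylemma}, computing the kernel of $x\mapsto p^{r-1}x+\alpha^{[p^s]}+p^rD$ directly where the paper chains isomorphism theorems. You then get infinite index from the infinitely many distinct $\beta+pG$ ($\beta\in\mathcal{S}_p$) strictly between $\alpha$ and $D$, each of which contains that subgroup.

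Two of your details fill in points the paper glosses over. First, you use condition (2) of \Cref{nonconvex conditions} to find $\gamma\in\mathcal{S}_p$ with $\alpha<\gamma\leq D$, which is needed since $D$ need not lie in a spine. Second, you use purity of $D$ to land in $\beta_i+pD$ rather than $\beta_i+pG$.
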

\begin{proof}
	First, by \Cref{nonconvex conditions} and \Cref{same number above}, $|\{\beta+pG :\beta\in\mathcal{S}_p\wedge\alpha < \beta<D\}|=\infty$.
	\nl\nl
	Next we prove the following:
	\begin{claim}
		$\frac{\alpha^{[p^s]}+p^{r-1}D}{\alpha^{[p^s]}+p^rD}\cong\frac{D}{(\alpha^{[p^{s-r+1}]}\cap D)+pD}$
	\end{claim}
	\begin{claimproof}
		We note that $\alpha^{[p^s]}\cap D$ is the same as calculating $\alpha^{[p^s]}$ within $D$. Now, we have:
		\begin{align*}
			\frac{\alpha^{[p^s]}+p^{r-1}D}{\alpha^{[p^s]}+p^rD}\cong &\frac{p^{r-1}D}{p^{r-1}D\cap(\alpha^{[p^s]}+p^rD)} \\
			=& \frac{p^{r-1}D}{p^{r-1}D\cap (\alpha^{[p^s]}\cap D)+p^rD} \\
			=&\frac{p^{r-1}D}{p^{r-1}(\alpha^{[p^{s-r+1}]}\cap D)+p^rD} & \tx{by \Cref{keylemma}} \\
			\cong& \frac{D}{(\alpha^{[p^{s-r+1}]}\cap D)+pD}
		\end{align*}
		
	\end{claimproof}
	\noindent To conclude, note that $(\forall D> \beta > \alpha)((\alpha^{[p^{s-r+1}]}\cap D)+pG\subseteq \beta+pG)$. Combined with $|\{\beta+pG :\beta\in\mathcal{S}_p\wedge D>\beta>\alpha\}|=\infty$, we get $[D:(\alpha^{[p^{s-r+1}]}\cap D)+pD]=\infty$. Observe that $(\alpha^{[p^{s}]}\cap D)+pD\leq (\alpha^{[p^{s-r+1}]}\cap D)+pD$, and so $[G:(\alpha^{[p^s]}\cap D)+pG]\geq[G:(\alpha^{[p^{s-r+1}]}\cap D)+pG]=\infty$.
\end{proof}

\begin{lemma}\label{leftnonconvexconvexrelationship}
	Suppose that $H=\alpha^{[p^s]}$ with $\alpha\leq G$ convex, $H\neq D+p^sG$ for any $D\leq G$ convex, and $K\leq G$ is convex with $\alpha <K\leq G$. Then for $r\leq s$, $$[K+p^rG:\alpha^{[p^s]}+p^rG] =\infty = [\alpha^{[p^s]}+K\cap p^{r-1}G+p^rG:\alpha^{[p^s]}+p^rG] $$
\end{lemma}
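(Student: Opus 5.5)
The plan is to reduce both indices to the single index computed in \Cref{infiniterightquotientsnonconvex}, taking $D=K$ there.

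\emph{Step 1: rewrite the middle group and compare the two groups.} Since $K$ is convex, it is pure, so $K\cap p^{r-1}G=p^{r-1}K$. The middle group is therefore $\alpha^{[p^s]}+p^{r-1}K+p^rG$. Next, $K$ is one of the convex subgroups $H$ with $\alpha\leq H$ appearing in \Cref{def D^{[n]}}, so $\alpha^{[p^s]}\subseteq K+p^sG\subseteq K+p^rG$, using $r\leq s$. It follows that
\begin{align*}
\alpha^{[p^s]}+p^rG\subseteq \alpha^{[p^s]}+p^{r-1}K+p^rG\subseteq K+p^rG.
\end{align*}
Hence $[K+p^rG:\alpha^{[p^s]}+p^rG]\geq[\alpha^{[p^s]}+p^{r-1}K+p^rG:\alpha^{[p^s]}+p^rG]$, and it suffices to show that the right-hand index is infinite.

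\emph{Step 2: transfer from \Cref{infiniterightquotientsnonconvex}.} Applying that lemma with $D=K$ (its hypotheses $\alpha<K$, $\alpha^{[p^s]}$ not of the form $K'+p^sG$, and $s\geq r\geq 1$ all hold) gives
\begin{align*}
[\alpha^{[p^s]}+p^{r-1}K:\alpha^{[p^s]}+p^rK]=\infty.
\end{align*}
Consider the natural homomorphism
\begin{align*}
\frac{\alpha^{[p^s]}+p^{r-1}K}{\alpha^{[p^s]}+p^rK}\rightarrow\frac{\alpha^{[p^s]}+p^{r-1}K+p^rG}{\alpha^{[p^s]}+p^rG}.
\end{align*}
It is clearly surjective. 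If it is also injective, the target is infinite, which gives both equalities.

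\emph{Step 3 (main obstacle): injectivity.} We need
\begin{align*}
(\alpha^{[p^s]}+p^{r-1}K)\cap(\alpha^{[p^s]}+p^rG)\subseteq\alpha^{[p^s]}+p^rK.
\end{align*}
Take an element $a+p^{r-1}k=a'+p^rg$ with $a,a'\in\alpha^{[p^s]}$, $k\in K$ and $g\in G$. Then $p^rg\in\alpha^{[p^s]}+K\subseteq K+p^sG$, so $p^rg=k'+p^sh$ for some $k'\in K$ and $h\in G$.

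Since $r\leq s$, we have $k'=p^rg-p^sh\in K\cap p^rG=p^rK$, again by purity. So $p^rg\in p^rK+p^sG$.

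Finally, $p^sG\subseteq\beta+p^sG$ for every $\beta$ in the intersection defining $\alpha^{[p^s]}$, so $p^sG\subseteq\alpha^{[p^s]}$. Therefore $p^rg\in\alpha^{[p^s]}+p^rK$, and hence $a'+p^rg\in\alpha^{[p^s]}+p^rK$, as required.

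The only delicate points are the two uses of purity of $K$ and the inclusion $\alpha^{[p^s]}\subseteq K+p^sG$. If the latter needed strictness $\alpha<K$ in the definition, I would instead invoke the characterisation $\alpha^{[p^s]}=\bigcap_{\beta>\alpha,\beta\in\mathcal{S}_p}\beta+p^sG$ used in \Cref{keylemma}, together with condition (2) of \Cref{nonconvex conditions}. Condition (2) supplies some $\beta\in\mathcal{S}_p$ with $\alpha<\beta\leq K$, which gives the same inclusion.
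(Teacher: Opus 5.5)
Your proof is correct, but it is organised differently from the paper's.

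\textbf{The paper's route.} The paper treats the two indices separately. From $\alpha^{[p^s]}\subseteq K+p^sG$ and purity of $K$ it derives $K\cap(\alpha^{[p^s]}+p^rG)=(K\cap\alpha^{[p^s]})+p^rK$. This gives $\frac{K+p^rG}{\alpha^{[p^s]}+p^rG}\cong\frac{K}{(K\cap\alpha^{[p^s]})+p^rK}$. A second pass with the modular law and \Cref{keylemma} then gives $\frac{\alpha^{[p^s]}+K\cap p^{r-1}G+p^rG}{\alpha^{[p^s]}+p^rG}\cong\frac{K}{(K\cap\alpha^{[p^{s-r+1}]})+pK}$. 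Both are then shown infinite using the last two quantities in \Cref{infiniterightquotientsnonconvex} with $D=K$ (the indices of subgroups of $K$).

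\textbf{Your route.} You use the sandwich $\alpha^{[p^s]}+p^rG\subseteq\alpha^{[p^s]}+p^{r-1}K+p^rG\subseteq K+p^rG$ to reduce both claims to the smaller index. You then show by an element chase that the natural map from $\frac{\alpha^{[p^s]}+p^{r-1}K}{\alpha^{[p^s]}+p^rK}$ is an isomorphism. This lets you quote the \emph{first} equality of \Cref{infiniterightquotientsnonconvex} directly.

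\textbf{Comparison.} The underlying input is the same in both arguments. The inclusion $\alpha^{[p^s]}\subseteq K+p^sG$, together with purity of $K$, is what lets one replace $p^rG$ by $p^rK$. Your version is shorter: it avoids re-running \Cref{keylemma} and avoids a separate computation for $K+p^rG$, since that work is already packaged in \Cref{infiniterightquotientsnonconvex}. The paper's version gives more in exchange: explicit descriptions of both quotients as quotients of $K$, rather than just their infinitude.

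Your fallback remark about strictness is harmless but unnecessary. Since $\alpha<K$ strictly, $K$ belongs to the family of convex subgroups in the (intended, strict) definition of $\alpha^{[p^s]}$, so $\alpha^{[p^s]}\subseteq K+p^sG$ holds directly. The detour through condition (2) of the characterisation would only be needed for the $\mathcal{S}_p$-indexed description.
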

\begin{proof}
	First, note that 
	\begin{align*}
		\frac{K+p^rG}{\alpha^{[p^s]}+p^rG} \cong& \frac{K}{K\cap(\alpha^{[p^s]}+p^rG)} 
	\end{align*}
	Now, suppose $x = a+p^rg_1$ with $a\in \alpha^{[p^s]}, g_1\in G$. Then by the definition of $\alpha^{[p^s]}$, we have that $\alpha^{[p^s]}\subseteq K+p^sG$. So $a = k + p^sg_2$ with $k\in K, g_2\in G$. Note that as $\alpha^{[p^s]} = \alpha^{[p^s]}+p^sG$, we have $k\in \alpha^{[p^s]}$. Now, as $s\geq r$, we have $x\in K\cap\alpha^{[p^s]}+p^rG$. Hence, $K\cap (\alpha^{[p^s]}+p^rG)\subseteq K\cap(K\cap\alpha^{[p^s]}+p^rG)$. The reverse inclusion is clear. Hence, we have that 
	\begin{align*}
		\frac{K}{K\cap(\alpha^{[p^s]}+p^rG)} = & \frac{K}{K\cap(K\cap \alpha^{[p^s]}+p^rG)} \\
		= & \frac{K}{K\cap \alpha^{[p^s]}+K\cap p^rG} \\
		= & \frac{K}{K\cap \alpha^{[p^s]}+p^rK}
	\end{align*}
	Note also that
	\begin{align*}
		\frac{\alpha^{[p^s]}+K\cap p^{r-1}G +p^rG}{\alpha^{[p^s]}+p^rG} \cong& \frac{K\cap p^{r-1}G}{K\cap p^{r-1}G\cap(\alpha^{[p^s]}+p^rG)} \\
		\cong& \frac{p^{r-1}K}{p^{r-1}G\cap(K\cap\alpha^{[p^s]}+p^rG\cap K)} \\
		= & \frac{p^{r-1}K}{K\cap\alpha^{[p^s]}\cap p^{r-1}G+p^rK} \\
		= & \frac{p^{r-1}K}{K\cap p^{r-1}\alpha^{[p^{s-r+1}]}+p^rK} \\
		= & \frac{p^{r-1}K}{p^{r-1}(K\cap \alpha^{[p^{s-r+1}]}+pK)} \\
		\cong & \frac{K}{K\cap \alpha^{[p^{s-r+1}]}+pK}
	\end{align*}
	Now, note that $K$ is an oag with $\alpha\leq K$ convex, and $\alpha^{[p^s]}\cap K$ is the same as evaluating $\alpha^{[p^s]}$ within $K$. Hence, we may apply \Cref{infiniterightquotientsnonconvex} and conclude that both of these quotients are infinite.
\end{proof}
\begin{corollary}\label{[b^p^s :a^p^s] infinite}
	Suppose that $H = \alpha^{[p^s]}, K = \beta^{[p^t]}$ with $\alpha < \beta \leq G$ convex, and $\alpha^{[p^s]}\neq D+p^sG$ for any $D\leq G$ convex. Then for $r\leq s$, $$[K+p^rG:\alpha^{[p^s]}+p^rG] =\infty = [\alpha^{[p^s]}+K\cap p^{r-1}G+p^rG:\alpha^{[p^s]}+p^rG] $$
\end{corollary}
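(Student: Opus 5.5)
Since $\alpha<\beta$ are convex and $\beta\leq\beta^{[p^t]}=K$ by definition of $K$, we reduce to \Cref{leftnonconvexconvexrelationship} applied to the convex subgroup $\beta$ in place of $K$. That lemma, with $\alpha<\beta\leq G$ and the hypothesis $\alpha^{[p^s]}\neq D+p^sG$ for all convex $D$, gives
\begin{align*}
[\beta+p^rG:\alpha^{[p^s]}+p^rG]=\infty=[\alpha^{[p^s]}+\beta\cap p^{r-1}G+p^rG:\alpha^{[p^s]}+p^rG].
\end{align*}
The remaining step is monotonicity of indices over the common base $\alpha^{[p^s]}+p^rG$.

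For the first index, $\beta\subseteq K$ gives $\beta+p^rG\subseteq K+p^rG$. Both contain $\alpha^{[p^s]}+p^rG$: indeed $\alpha^{[p^s]}\subseteq\beta+p^sG\subseteq\beta+p^rG$ by the definition of $\alpha^{[p^s]}$ as an intersection over convex $H\geq\alpha$ (taking $H=\beta$) and $r\leq s$. Hence $[K+p^rG:\alpha^{[p^s]}+p^rG]\geq[\beta+p^rG:\alpha^{[p^s]}+p^rG]=\infty$.

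For the second index, $\beta\cap p^{r-1}G\subseteq K\cap p^{r-1}G$, so
\begin{align*}
\alpha^{[p^s]}+\beta\cap p^{r-1}G+p^rG\subseteq\alpha^{[p^s]}+K\cap p^{r-1}G+p^rG,
\end{align*}
and both contain $\alpha^{[p^s]}+p^rG$. The index is therefore again at least the infinite one from the lemma.

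The only point needing care, and the main (mild) obstacle, is confirming $\beta\subseteq\beta^{[p^t]}$. This holds since every convex $H\geq\beta$ satisfies $\beta\subseteq H\subseteq H+p^tG$. It is also worth noting that $K$ need not be convex, which is why we pass through $\beta$ rather than applying the lemma to $K$ directly. No assumption relating $t$ to $s$ or $r$ is needed.
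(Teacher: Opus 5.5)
Your proposal is correct and matches the paper's proof: both observe $\beta\leq\beta^{[p^t]}=K$, use monotonicity of indices over the common base $\alpha^{[p^s]}+p^rG$, and apply \Cref{leftnonconvexconvexrelationship} to the convex subgroup $\beta$. You also check explicitly the containments the paper leaves implicit, namely $\beta\subseteq\beta^{[p^t]}$ and $\alpha^{[p^s]}+p^rG\subseteq\beta+p^rG$.
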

\begin{proof}
	Immediate after noticing that $\beta\leq K$, and hence$$[K+p^rG:\alpha^{[p^s]}+p^rG]\geq [\beta+p^rG:\alpha^{[p^s]}+p^rG]$$ and $$[\alpha^{[p^s]}+K\cap p^{r-1}G+p^rG:\alpha^{[p^s]}+p^rG]\geq [\alpha^{[p^s]}+\beta \cap p^{r-1}G+p^rG:\alpha^{[p^s]}+p^rG]$$ and then applying \Cref{leftnonconvexconvexrelationship}.
\end{proof}

\begin{lemma}\label{Describing a^p^j - a^p^s quotients}
	Let $0\leq i < r<s$. Then
	$$\frac{\alpha^{[p^{s-1}]}\cap p^iG + \alpha^{[p^s]}+p^rG}{\alpha^{[p^{s-1}]}\cap p^{i+1}G + \alpha^{[p^s]}+p^rG} \cong \frac{\alpha^{[p^{s-i-1}]}+pG}{\alpha^{[p^{s-i}]}+pG}$$
	And hence we have
	$$[\alpha^{[p^{s-1}]}+p^rG:\alpha^{[p^s]}+p^rG] = \prod\limits_{1\leq i 
		\leq r} [{\alpha^{[p^{s-i}]}+pG}:{\alpha^{[p^{s-i+1}]}+pG}]$$
	And so $$[\alpha^{[p^{s-1}]}+p^iG:\alpha^{[p^s]}+p^iG]\leq [\alpha^{[p^{s-1}]}+p^rG:\alpha^{[p^s]}+p^rG]$$
\end{lemma}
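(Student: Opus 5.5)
The plan is to prove the displayed isomorphism first and then obtain the index formula by multiplying indices along a filtration. Write $N = \alpha^{[p^s]}+p^rG$. Since $\alpha^{[p^s]}\subseteq \alpha^{[p^{s-1}]}$, the groups
\[
M_i := \alpha^{[p^{s-1}]}\cap p^iG + N, \qquad 0\leq i\leq r,
\]
form a descending chain. The two ends are $M_0 = \alpha^{[p^{s-1}]}+p^rG$ and $M_r = N$, because $\alpha^{[p^{s-1}]}\cap p^rG\subseteq p^rG$. So the index $[M_0:M_r]$ is the product of the successive indices $[M_i:M_{i+1}]$.

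For the isomorphism, I will argue as in the Claim of \Cref{infiniterightquotientsnonconvex}. The composite map
\[
\alpha^{[p^{s-1}]}\cap p^iG \to M_i/M_{i+1}
\]
is surjective, so $M_i/M_{i+1}\cong (\alpha^{[p^{s-1}]}\cap p^iG)/(\alpha^{[p^{s-1}]}\cap p^iG\cap M_{i+1})$.

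\begin{itemize}
\item By \Cref{keylemma} (with $s-1\geq i$), the numerator is $p^i\alpha^{[p^{s-1-i}]}$.
\item For the denominator, I will show it equals $p^i(\alpha^{[p^{s-i-1}]}\cap pG + \alpha^{[p^{s-i}]}+p^{r-i}G)$ intersected appropriately. Inside $p^iG$, the modular law (everything involved except $\alpha^{[p^s]}$ already lies in $p^iG$) reduces the denominator to $\alpha^{[p^{s-1}]}\cap p^{i+1}G + \alpha^{[p^s]}\cap p^iG + p^rG$.
\item Applying \Cref{keylemma} again and dividing by $p^i$ (allowed since $G$ is torsion-free, so $x\mapsto p^ix$ is injective) gives
\[
\frac{\alpha^{[p^{s-1-i}]}}{\alpha^{[p^{s-1-i}]}\cap pG\cdot{} + \alpha^{[p^{s-i}]}+p^{r-i}G}.
\]
To make this intersection well behaved, I will need $\alpha^{[p^{s-i}]}\subseteq\alpha^{[p^{s-1-i}]}$ and $p^{r-i}G\subseteq pG$. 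The second holds since $r-i\geq1$.
\item Finally I simplify using $\alpha^{[p^{s-1-i}]}\cap pG + \alpha^{[p^{s-i}]}+p^{r-i}G\ \cap\ \alpha^{[p^{s-1-i}]} = \alpha^{[p^{s-1-i}]}\cap(\alpha^{[p^{s-i}]}+pG)$. By the second isomorphism theorem the quotient becomes $(\alpha^{[p^{s-i-1}]}+pG)/(\alpha^{[p^{s-i}]}+pG)$.
\end{itemize}

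The step I expect to be the main obstacle is the bookkeeping identity
\[
\alpha^{[p^{s-1}]}\cap p^iG\cap M_{i+1} = p^i\bigl(\alpha^{[p^{s-1-i}]}\cap(\alpha^{[p^{s-i}]}+pG)\bigr).
\]
The difficulty is the term $\alpha^{[p^s]}\cap p^iG = p^i\alpha^{[p^{s-i}]}$, which needs \Cref{keylemma} with $s\geq i$. A further concern is that elements of $\alpha^{[p^s]}+p^rG$ lying in $p^iG$ may split as a sum whose pieces individually leave $p^iG$. To handle this, I will use $\alpha^{[p^s]} = \alpha^{[p^s]}+p^sG$ together with $i<r\leq s$. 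Then any decomposition $a+p^rg\in p^iG$ forces $a\in p^iG$, so the modular law applies. The containments $\alpha^{[p^{s-1}]}\supseteq\alpha^{[p^s]}$ and $\alpha^{[p^{s-i}]}\supseteq \alpha^{[p^s]}$ are immediate from the definition, since $H+p^{m}G\supseteq H+p^{m+1}G$.

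The index formula then follows by multiplying over $i=0,\dots,r-1$ and reindexing with $i\mapsto i+1$. For the last inequality, I apply the product formula with $r$ replaced by $i$ (the case $i=0$ is trivial). This gives a sub-product of the one for $r$. Every factor is at least $1$, with the convention $\infty$ if any factor is infinite, so the inequality holds.
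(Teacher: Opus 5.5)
Your proposal is correct and is essentially the argument the paper has in mind: the paper only says the isomorphism is a straightforward application of \Cref{keylemma} and the isomorphism theorems, with the index formula and inequality immediate from it, and your filtration $M_i$, modular-law reduction, division by $p^i$, and telescoping fill this in correctly. Two cosmetic points: the garbled denominator should read $\alpha^{[p^{s-1-i}]}\cap(\alpha^{[p^{s-1-i}]}\cap pG+\alpha^{[p^{s-i}]}+p^{r-i}G)$, and your worry about decompositions $a+p^rg$ is moot, since $p^rG\subseteq p^iG$ already lets the modular law apply directly.
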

\begin{proof}
	The second and third statements are immediate from the first, and the first is a straightforward application of \Cref{keylemma} and group isomorphism theorems.
\end{proof}
\noindent With the following Lemmas (3.2 and 3.3 from \cite{QE}) we now have enough to prove \Cref{(theorem) main for a^p^s+p^rG}:
\begin{lemma}\label{QE 3.2}
    Suppose we have an abelian group $G$, a subgroup $G'\leq G$ and a subset $Y\subseteq G$ of the form
    \begin{align*}
            Y = (a_0+M_0)\setminus \bigcup\limits_{1\leq i\leq k}(a_i+M_i)
    \end{align*}
    where the $M_i$ are subgroups of $G$, $a_i\in G$, and $a_i+M_i\subseteq a_0+M_0$ for $1\leq i \leq j$, and $(a_i+M_i)\cap (a_j+M_j)=\emptyset$ for $1\leq i < j\leq k$. Then for $y\in G$ we have that $y\in Y'=Y+G'$ iff both of the following hold:
    \begin{align}
    	y-a_0 & \in M_0 + G' \\
    	\sum\limits_{\substack{1\leq i\leq k \\ y-a_a\in M_i+G'}} & \frac{1}{[M_0\cap G':M_i\cap G'] } < 1
    \end{align}
\end{lemma}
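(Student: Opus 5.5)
We show that $y\in Y'$ exactly when the coset $y+G'$ meets $Y$, and translate this into a covering problem for cosets inside one coset of $M_0\cap G'$. We read the statement with its evident corrections: the containment $a_i+M_i\subseteq a_0+M_0$ holds for all $1\leq i\leq k$; the second condition reads $y-a_i\in M_i+G'$; and $1/[M_0\cap G':M_i\cap G']=0$ when the index is infinite. Write $N_i=M_i\cap G'$ for $0\leq i\leq k$.

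\emph{Step 1 (first condition).} Note $y\in Y+G'$ iff $(y+G')\cap Y\neq\emptyset$. Since $Y\subseteq a_0+M_0$, this forces $(y+G')\cap(a_0+M_0)\neq\emptyset$, i.e.\ $y-a_0\in M_0+G'$, which is condition (1). Conversely, if (1) holds, pick $z\in(y+G')\cap(a_0+M_0)$. Then $(y+G')\cap(a_0+M_0)=z+N_0$, and $y\in Y'$ iff $z+N_0\not\subseteq\bigcup_{1\leq i\leq k}(a_i+M_i)$.

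\emph{Step 2 (reduce to cosets of the $N_i$).} Since $a_i+M_i\subseteq a_0+M_0$, we have $(a_i+M_i)\cap(z+N_0)=(a_i+M_i)\cap(z+G')$. This set is nonempty iff $a_i-z\in M_i+G'$. As $z-y\in G'$, that is equivalent to $y-a_i\in M_i+G'$. When it is nonempty it is a single coset $c_i+N_i$ with $c_i+N_i\subseteq z+N_0$. Let $I$ be the set of such $i$, i.e.\ exactly the indices summed over in condition (2). The cosets $c_i+N_i$ for $i\in I$ are pairwise disjoint, by the disjointness hypothesis on the $a_i+M_i$. After translating by $-z$, the question becomes: is $N_0$ equal to a disjoint union of the cosets $c_i'+N_i$ ($i\in I$), where each $N_i\leq N_0$?

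\emph{Step 3 (counting lemma).} We claim that a finite family of pairwise disjoint cosets $c_i'+N_i\subseteq N_0$ always satisfies $\sum_i 1/[N_0:N_i]\leq 1$, with equality iff the family covers $N_0$. This gives (2) $\iff$ not covered $\iff$ $y\in Y'$.
\begin{itemize}
\item For the inequality, let $N$ be the intersection of the $N_i$ of finite index in $N_0$; then $[N_0:N]<\infty$. Each $c_i'+N_i$ of finite index is a union of exactly $[N_i:N]=[N_0:N]/[N_0:N_i]$ cosets of $N$. Disjointness then gives $\sum_i [N_0:N]/[N_0:N_i]\leq[N_0:N]$, with equality iff these finite-index cosets cover $N_0$.
\item Infinite-index terms contribute $0$ to the sum, so equality of the sum with $1$ implies a cover.
\item Conversely, if the whole family covers $N_0$, B.H.~Neumann's lemma applies: if a group is a finite union of cosets, the cosets of subgroups of infinite index may be discarded. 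So the finite-index cosets already cover, and the sum equals $1$.
\end{itemize}

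The main obstacle is this last direction, which needs Neumann's lemma. I would either cite it or prove it by the standard induction on the number of distinct infinite-index subgroups: if a coset $c+K$ with $[N_0:K]=\infty$ is not needed, discard it; otherwise choose a coset of $K$ missing all cosets of $K$ appearing in the family (possible since $K$ has infinitely many cosets). That coset is covered by the remaining cosets, and translating this cover removes $K$ from the list. Everything else is bookkeeping with the isomorphism theorems.
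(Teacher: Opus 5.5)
The paper gives no proof of this lemma: it is quoted from Cluckers--Halupczok (Lemma~3.2 there), so there is no in-paper argument to compare yours with. Your argument is correct and is the natural one. Your three corrections are the right reading. In particular, the convention $1/\infty=0$ is genuinely needed, because the lemma is later applied to indices that are ``either infinite or a power of $p$''. Steps~1 and~2 correctly reduce membership in $Y+G'$ to whether $z+N_0$ is covered by the pairwise disjoint cosets $c_i+N_i$, $i\in I$. Here $I$ is exactly the index set of the sum in (2), and $N_i\le N_0$ because $M_i\le M_0$. The counting in Step~3 is right. If you simply cite Neumann's lemma in its strong form (infinite-index cosets can be deleted from any finite coset cover of a group), the proof is complete.

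The one thing to fix is the self-contained induction you offer as an alternative, which does not prove that strong form as written. Suppose you cover a fresh coset $w+K$ by the remaining cosets and translate by $x-w$ to cover a coset $x+K$ of the family. Then the finite-index cosets get translated too, so the new family contains finite-index cosets $x-w+x_j+K_j$ that were not in the original family. The inductive hypothesis then only says that \emph{some} finite-index cosets cover $N_0$, not the original ones. What your sketch really establishes is the weak form: a group is never a finite union of cosets of infinite-index subgroups.

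Either of the following repairs works.
\begin{enumerate}[(a)]
\item Let $N$ be the intersection of the finite-index subgroups in the family. For each $K$-coset $x+K$ of the family, choose $w\in x+N$ such that $w+K$ is not a $K$-coset of the family. This is possible since $x+N$ meets $[N:N\cap K]=\infty$ cosets of $K$. Translation by $x-w\in N$ then fixes every finite-index coset, and the induction goes through.
\item Deduce the strong form from the weak one. If the finite-index cosets miss a point, they miss a whole coset $x+N$, since their union is a union of $N$-cosets. So $x+N$ is covered by the infinite-index cosets. Intersecting with $x+N$ and translating covers $N$ by cosets of the subgroups $N_i\cap N$. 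Each of these has infinite index in $N$ because $[N_0:N]<\infty$, which contradicts the weak form.
\end{enumerate}
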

\begin{lemma}\label{QE 3.3}
    Suppose that $n\in\mathbb{N}, n\geq 2$, and that $q_1,\dots,q_k$ are powers of $n$. Then there exists an $N\in\mathbb{N}$ depending only on $n$ and $k$ such that $$\sum\limits_{1\leq i\leq k} q_i^{-1} \geq 1 \iff \sum\limits_{\substack{1\leq i\leq k \\ q_i < n^N}} q_i^{-1} \geq 1$$
\end{lemma}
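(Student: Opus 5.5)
The plan is a pigeonhole argument on the exponents, with an explicit $N$. Write $q_i = n^{e_i}$ with $e_i\in\mathbb{N}$. Fix $L\in\mathbb{N}$ with $n^{L-1} > k$, for instance $L = \lceil \log_n k\rceil + 2$, and set $N = kL$. This $N$ depends only on $n$ and $k$. The direction $\Leftarrow$ is trivial, since the restricted sum is a subsum of nonnegative terms. So the work is in $\Rightarrow$, which I will prove in contrapositive form: assuming $\sum_{q_i<n^N} q_i^{-1} < 1$, I will show $\sum_{i\leq k} q_i^{-1}<1$.

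The first step is to find a gap in the exponents. Consider the $k$ disjoint intervals $[mL,(m+1)L)$ for $m = 0,\dots,k-1$; they all lie in $[0,N)$. There are only $k$ exponents, but one or more of them may be $\geq N$. If every interval contains an exponent, then all $k$ exponents lie in $[0,N)$. In that case every term is in the restricted sum, and there is nothing to prove. Otherwise, fix some $m<k$ such that no $e_i$ lies in $[mL,(m+1)L)$. Split the indices into $A=\{i: e_i < mL\}$ and $C = \{i : e_i\geq (m+1)L\}$. Since $mL\leq N$, every index in $A$ contributes to the restricted sum, so $\sum_{i\in A} q_i^{-1} < 1$.

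The second step is a discreteness bound. Each term $q_i^{-1}$ with $i\in A$ is an integer multiple of $n^{-(mL-1)}$, so $\sum_{i\in A}q_i^{-1}$ is too. Being strictly less than $1$, it is therefore at most $1 - n^{-(mL-1)}$. If $m=0$, then $A$ is empty and this sum is $0$, which is consistent with reading the bound as $\leq 1-n^{1}$ replaced by $0$; that case is handled in the final step.

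The third step bounds the tail. There are at most $k$ indices in $C$, so
\begin{align*}
\sum_{i\in C} q_i^{-1} \leq k\, n^{-(m+1)L} < n^{L-1}n^{-(m+1)L} = n^{-(mL+1)} \leq n^{-(mL-1)}.
\end{align*}
Adding this to the bound from the second step gives $\sum_{i\leq k}q_i^{-1} < 1$, as required. In the case $m=0$ the total is just $\sum_{i\in C}q_i^{-1} \leq k n^{-L} < 1$.

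The only delicate point is choosing the spacing $L$ so that the tail beyond the gap is strictly smaller than the granularity $n^{-(mL-1)}$ of the head sum. With $n^{L-1}>k$ this holds, so I expect no real obstacle.
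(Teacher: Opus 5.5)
Your proof is correct, and it gives an explicit $N=k(\lceil\log_n k\rceil+2)$. The paper does not prove this lemma itself: it imports it without proof as Lemma~3.3 of \cite{QE}, so there is no internal argument to compare against. All three steps check out: the pigeonhole step, the granularity bound (for $m\geq 1$ the head sum lies in $n^{-(mL-1)}\mathbb{Z}$), and the tail estimate $k\,n^{-(m+1)L}<n^{-(mL+1)}$. The only blemish is your parenthetical about $m=0$ in the second step. There $1-n^{-(mL-1)}=1-n<0$, so the bound is simply false rather than ``consistent''. Since you treat $m=0$ separately at the end, this does no harm; just say that the granularity bound is only used for $m\geq 1$.

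For comparison, a natural alternative sorts the exponents instead of looking for a gap. Order $e_1\leq\dots\leq e_k$ and put $S_j=\sum_{i\leq j}n^{-e_i}$. If $S_{j-1}<1$, then $S_{j-1}\in n^{-e_j}\mathbb{Z}$, so $S_{j-1}\leq 1-n^{-e_j}$ and hence $S_j\leq 1$. Thus the partial sums cannot jump over $1$, and if the total is $\geq 1$, some initial segment sums to exactly $1$. Now take an exact identity $1=\sum_{i\leq j}n^{-e_i}$ with maximal exponent $E\geq 1$. The number of terms of exponent $E$ is a positive multiple of $n$, so $n$ of them can be merged into one term of exponent $E-1$. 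Every level needs at least one merge and each merge removes at least one term, so $E\leq j-1\leq k-1$, which gives $N=k$. That route exploits the divisibility chain among powers of $n$ to get a linear bound, plus the structural fact that a subfamily sums to exactly $1$. Your route gives a worse $N$, but it only uses that the head sum is discrete and the tail is small, which makes it more robust. If you space the windows by least common multiples instead of powers of $n$, the same scheme handles arbitrary positive integer denominators. In the paper's application only the existence of some $N$ depending on $p$ and $k$ matters, so either proof serves.
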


\begin{theorem}\label{main for a^p^s+p^rG}
	Let $d,e\in\mathbb{N}_{>0}$, $B\subseteq G$, $\mathcal{A}$ a $B$-definable subset of the spines, $1\leq r \leq s$, and $\left\{f_\alpha:G^d\rightarrow (\frac{G}{\alpha^{[p^s]}+p^rG})^{(e)}\right\}_{\alpha\in\mathcal{A}}$ be a $B$-definable family of functions. Then we can find:
	\begin{itemize}
        \item $k,\ell\in\mathbb{N}$, and $n_h, m_h\in \mathbb{N}$ for $h\leq k$
        \item $B$-definable subsets of the spines $\mathcal{A}_1,\dots,\mathcal{A}_\ell$  which partition $\mathcal{A}$
		\item $B$-definable families $X_1^\alpha,\dots,X_k^\alpha\subseteq G^d$ which partition $G^d$ for every fixed $\alpha\in\mathcal{A}$
		\item $1\leq l_{i,h,1} <\dots<l_{i,h,n_h}\leq s$, $r = j_{i,h,1}>\dots>j_{i,h,n_h}\geq 1$ for $i\leq\ell, h\leq k$ such that for all $\alpha\in\mathcal{A}_i$,
        \begin{align*}
            \left[\bigcap\limits_{1\leq t\leq n_h}(\alpha^{[p^{l_{i,h,t}}]}+p^{j_{i,h,t}}G):\alpha^{[p^s]}+p^rG\right]<\infty
        \end{align*}
		\item $B$-definable linear functions $f_{i,h,t}:G^d\rightarrow G$ for $1\leq t\leq n_h$, $1\leq i\leq\ell$, $1\leq h\leq k$
		\item $1\leq v_{i,h,1},\dots,v_{i,h,m_h}\leq s$, $1\leq w_{i,h,1},\dots,w_{i,h,m_h}\leq r$
		\item $B$-definable linear functions $g_{i,h,t}:G^d\rightarrow G$ for $1\leq t\leq m_h$, $1\leq i\leq\ell$, $1\leq h\leq k$
	\end{itemize} such that for $\alpha \in\mathcal{A}_i$, $f_\alpha = \bigcup\limits_{1\leq h\leq k} ((\bigcap\limits_{t\leq n_h}f_{i,h,t}^{H_{i,h,t}}|_{X_h^\alpha})\setminus(\bigcup\limits_{t\leq m_h}g_{i,h,t}^{K_{i,h,t}}|_{X_h^\alpha}))$, where $H_{i,h,t} = \alpha^{[p^{l_{i,h,t}}]}+p^{j_{i,h,t}}G$ and $K_{i,h,t} = \alpha^{[p^{v_{i,h,t}}]}+p^{w_{i,h,t}}G$. 
\end{theorem}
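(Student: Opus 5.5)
The proof will follow the template of \Cref{Uniform main for convex}, with the added difficulty that $\alpha^{[p^s]}+p^rG$ is not convex. In that case a formula in the quotient sort can no longer be pushed into a stably embedded ordered structure. Instead it gives a boolean combination of congruence conditions modulo the groups $\alpha^{[p^{l}]}+p^{j}G$, for $l\leq s$ and $j\leq r$. First I expand the language by definitions with the relations $\equiv_{\alpha^{[p^l]}+p^jG}$ for all $l\leq s$, $j\leq r$, together with the uniform predicates on the spines coming from $\mathcal{A}$. By the relative quantifier elimination of \cite{QE}, after splitting into finitely many $B$-definable pieces, the graph of $f_\alpha$ (viewed as a relation $\bar x\mapsto y$, with $y$ ranging over representatives in $G$) is defined by
$$(\exists\bar\theta)\bigl(\xi(\bar\theta,\alpha)\wedge\psi(\bar x,y,\bar b,\bar\theta)\bigr),$$
where $\xi$ lives in the spines and $\psi$ is a conjunction of literals. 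The spine part $\xi$ only affects the partition. I take $\mathcal{A}_1,\dots,\mathcal{A}_\ell$ to be the $B$-definable pieces on which the spine-type of $\alpha$ (which of the relevant indices $[\alpha^{[p^{l}]}+p^jG:\alpha^{[p^{l'}]}+p^{j'}G]$ are finite, and their values below a bound) is constant. On each $\mathcal{A}_i$ the shape of the decomposition is then fixed, so the indices $l_{i,h,t},j_{i,h,t},v,w$ can depend on $i$.

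Next I use that $f_\alpha(\bar x)$ is a set of $e$ cosets of $H:=\alpha^{[p^s]}+p^rG$, hence invariant under adding $H$ to $y$. So $\psi$, restricted to the fibre in $y$, is a boolean combination of three kinds of literal. The first kind are order literals $sy<t(\bar x)$. Since the fibre is $H$-invariant and $H$ is not convex, these can be discarded, because $H$ is cofinal: \Cref{infiniterightquotientsnonconvex} and \Cref{leftnonconvexconvexrelationship} show that $H$ meets every convex subgroup above $\alpha$ nontrivially, and in fact densely. The second kind are congruences $s y\equiv_{M} t(\bar x)$ with $M=\alpha'^{[p^{l}]}+p^{j}G$ or $D+p^jG$. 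Dividing out as in \Cref{Functions to G are linear}, each of these becomes $y\in f^{M}(\bar x)$ for a linear $f$. The third kind are the spine terms $s_{p^j}(sy-t(\bar x))$, which I will also replace by such conditions by case analysis.

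Each fibre is then a finite union of sets of the form $(a_0+M_0)\setminus\bigcup(a_i+M_i)$. I saturate these by $H$ using \Cref{QE 3.2}: $y\in Y+H$ iff $y-a_0\in M_0+H$ and a weighted sum over those $i$ with $y-a_i\in M_i+H$ is $<1$. Since all the indices are powers of $p$, \Cref{QE 3.3} lets me discard every $M_i$ with $[M_0\cap H:M_i\cap H]\ge p^N$. The remaining ones, together with $M_0+H$, form finitely many groups of the form $\alpha^{[p^{l}]}+p^jG$ containing $H$ (intersections with other groups are absorbed using \Cref{keylemma}). This gives exactly a positive part $\bigcap_t f^{H_t}$ minus a union $\bigcup_t g^{K_t}$.

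Two further points finish the proof. First, since $f_\alpha(\bar x)$ is a finite set of $H$-cosets, the positive intersection must have finite index over $H$; this is the finiteness condition in the statement. Second, \Cref{[b^p^s :a^p^s] infinite} and \Cref{Describing a^p^j - a^p^s quotients} show that groups involving a larger convex subgroup $\beta>\alpha$, or the form $D+p^jG$, have infinite index over $H$ and so cannot appear in the positive part. Sorting the positive part by $l$ and keeping only the minimal $j$ for each $l$ gives the monotone chains $l_{i,h,1}<\dots$ and $r=j_{i,h,1}>\dots$. The $X_h^\alpha$ are the sets of $\bar x$ on which a given combination of literals is the active one.

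I expect the main obstacle to be the middle step: showing uniformly in $\alpha$ that every literal, after saturation by $H$, becomes a congruence modulo some $\alpha^{[p^l]}+p^jG$ or is trivial. The hardest sub-case is literals involving spine elements $\beta\ne\alpha$ obtained from $\bar\theta$. For these I plan to use \Cref{nonconvex conditions} and its corollaries to show they are either absorbed by the partition into $\mathcal{A}_i$ and $X_h^\alpha$, or contribute infinite-index groups that \Cref{QE 3.3} removes.
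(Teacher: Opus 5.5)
There is a genuine gap, and it appears first where you discard the order literals. Your cofinality argument only shows that an order literal is vacuous on a \emph{nonempty $H$-invariant} set. The $y$-fibre of $\psi(\bar x,y,\bar b,\bar\theta)$ for a fixed witness $\bar\theta$, or a fixed disjunct, need not be $H$-invariant; only the union over all $\bar\theta$ satisfying $\xi$ is. For example, $x\mapsto x+H$ is defined by $(y\le x\wedge x\le y)\vee(y\equiv_H x\wedge y\neq x)$. Deleting the order literals turns the first disjunct into all of $G$.

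The paper avoids this by first saturating each piece, passing to $(\exists z)(y-z\in H\wedge\psi(\bar x,z,\bar\theta))$, and then eliminating $z$ along the quantifier elimination proof of \cite{QE}. After that, the endpoints of the interval in $y$ re-enter as linear terms in congruences modulo $H$. For instance, a piece whose interval is the single point $t(\bar x)$ saturates to $t(\bar x)+H$. So order literals contribute linear functions rather than disappearing. The lemmas you cite for density compute indices and say nothing about density.

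The central step is also not established, and this is where the paper's proof does its real work. It is not true that $\xi$ only affects the partition: the graph is a union over infinitely many $\bar\theta$, and $\bar\theta$ supplies the spine elements $\beta\neq\alpha$ in the congruence literals. A one-shot application of \Cref{QE 3.2} with $G'=H$ does not yield groups of the required form. For example, $M_i=\beta+p^rG$ with $\beta>\alpha$ satisfies $M_i\cap H=H$, so it has weight $1$, survives \Cref{QE 3.3}, and contributes $y-a_i\notin\beta+p^rG$. Likewise $M_0+H$, for $M_0$ an intersection of several congruence groups, need not be an intersection of groups $\alpha^{[p^{l}]}+p^{j}G$. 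Arguing that such groups have infinite index over $H$ and so cannot appear in the positive part is a non sequitur. An intersection of infinite-index groups can have finite index; what you need is redundancy.

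The paper supplies this through three ingredients you lack:
\begin{itemize}
\item It removes the groups lying below $H$ tower by tower, applying \Cref{QE 3.2} with $G'=K_{m-1}+p^rG$ so that the $M_i$ form a chain.
\item \Cref{[b^p^s :a^p^s] infinite} forces a positive congruence modulo some $\alpha^{[p^{j}]}+p^rG$. This makes every congruence involving $\beta>\alpha$ implied, up to a consistency condition on $\bar x$, and that is what eliminates the union over $\bar\theta$.
\item An induction on the maximal $\ell$ with $[\alpha^{[p^\ell]}+p^rG:H]=\infty$ shows that some positive congruence $\equiv^{[p^m]}_{\alpha,p^j}$ with $m>\ell$ and $j$ large enough must occur. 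Otherwise, by the choice of the minimal relevant $i$, infinitely many $H$-cosets remain. This induction is what produces the finite-index chains.
\end{itemize}

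Two smaller points. To get $j_1>\dots>j_n$ you must keep the \emph{maximal} $j$ for each $l$, not the minimal, since the larger $j$ implies the smaller. And deducing finite index of the positive part from finiteness of $f_\alpha(\bar x)$ needs an argument, for instance Neumann's covering lemma.
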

\begin{proof}
	By \cite{QE}, we may assume that $f_\alpha$ is defined by a formula of the form
	\begin{align*}
		\bigvee\limits_{i}(\exists\bar\theta)(\xi_i(\bar\theta, \bar\eta,\alpha)\wedge \psi_i(\bar x, y, \bar\theta))
	\end{align*}
	Where $\xi_i$ is a formula restricted to an $n$-spine, and $\psi_i$ is a conjunction of literals, with each atom of the form $r_iy\square_\alpha t_i(\bar x)$ where $t_i(\bar x)$ is a linear term and $\square \in \{<, \leq, >, \geq, \equiv_n, \equiv_n^{[m]}\}$.
    For now, we work with a fixed $\alpha\in\mathcal{A}$, and set $f = f_\alpha$; we suppress the $\alpha$ appearing in the definition and absorb it into $\bar\eta$.
	\nl\nl
	Note that by restricting to $X_i$ defined by $(\exists y)(\exists\bar\theta)(\xi_i(\bar\theta, \bar\eta)\wedge \psi_i(\bar x, y, \bar\theta))$ we may further reduce to
	\begin{align*}
		(\exists z)(y\equiv_{p^r}z\wedge(\exists\bar\theta)(\xi(\bar\theta, \bar\eta)\wedge \psi(\bar x, z, \bar\theta)))
	\end{align*}
	And furthermore, by following the quantifier-elimination proof in \cite{QE} and making use of the Chinese Remainder Theorem, we may reduce this to 
	\begin{align*}
		(\exists\bar\theta)(\xi(\bar\theta, \bar\eta)\wedge \chi(\bar x, y, \bar\theta))
	\end{align*}
	Where $\chi$ is a conjunction of literals with atoms of the form $r_iy\diamond_\beta t_i(\bar x)$, with $\diamond \in \{\equiv_{p^j}, \equiv_{p^j}^{[p^k]}\}$ and $1\leq j\leq r,k$. We may also assume that $\models (\forall \bar x)(\forall\bar\theta)((x\in\tx{dom}(f)\wedge\xi(\bar\theta,\eta))\rightarrow(\exists^{\geq 1}y)(\chi(\bar x, y,\bar\theta)$.
    \nl\nl
    The following claim lets us simplify the formula further:
    \begin{claim}
        We may also assume that $\beta \geq \alpha$, and if $\beta = \alpha$ then $k\leq s$
    \end{claim}
    \begin{claimproof}
        For fixed $\bar\theta$, we we have a consistent conjunction of congruence conditions coming from a family of lattices of subgroups of the following form:

        \[\begin{tikzcd}[
        	scale cd=.4,
            font = \huge,
        	text height = height("$H_1+p^{r-1}G$"),
        	text width = width("$H_1+p^{r-1}G$"),
        	align = center
        	]
        	&& {H_n+pG} \\
        	& {H_n+p^{r-1}G} && {H_1+pG} \\
        	{H_n+p^rG} && {H_1+p^{r-1}G} && {\alpha^{[p^s]}+pG} \\
        	& {H_1+p^rG} && {\alpha^{[p^s]}+p^{r-1}G} && {K_1+pG} \\
        	&& {\alpha^{[p^s]}+p^rG} && {K_1+p^{r-1}G} && {K_m+pG} \\
        	&&& {K_1+p^rG} && {K_m+p^{r-1}G} \\
        	&&&& {K_m+p^rG}
        	\arrow[dashed, no head, from=1-3, to=2-2]
        	\arrow[dotted, no head, from=1-3, to=2-4]
        	\arrow[no head, from=2-2, to=3-1]
        	\arrow[dotted, no head, from=2-2, to=3-3]
        	\arrow[dashed, no head, from=2-4, to=3-3]
        	\arrow[no head, from=2-4, to=3-5]
        	\arrow[dotted, no head, from=3-1, to=4-2]
        	\arrow[no head, from=3-3, to=4-2]
        	\arrow[no head, from=3-3, to=4-4]
        	\arrow[dashed, no head, from=3-5, to=4-4]
        	\arrow[no head, from=3-5, to=4-6]
        	\arrow[no head, from=4-2, to=5-3]
        	\arrow[no head, from=4-4, to=5-3]
        	\arrow[no head, from=4-4, to=5-5]
        	\arrow[dashed, no head, from=4-6, to=5-5]
        	\arrow[dotted, no head, from=4-6, to=5-7]
        	\arrow[no head, from=5-3, to=6-4]
        	\arrow[no head, from=5-5, to=6-4]
        	\arrow[dotted, no head, from=5-5, to=6-6]
        	\arrow[dashed, no head, from=5-7, to=6-6]
        	\arrow[dotted, no head, from=6-4, to=7-5]
        	\arrow[no head, from=6-6, to=7-5]
        \end{tikzcd}\]
        Where $K_i, H_i$ are either convex or of the form $\beta^{[p^k]}$. For $K_i$ we have $\beta \leq\alpha$, and if $\beta = \alpha$ then $k> s$; on the other hand for $H_i$ we have that $\beta \geq\alpha$, and if $\alpha = \beta$ then $k< s$. As an intermediate step it is helpful to modify the congruence conditions so that each individual condition either excludes or restricts to exactly 1 coset. We may do at the cost of having the terms potentially be $\mathbb{Z}[\frac{1}{p}]$-linear; we do however still retain $\mathbb{Z}$-linearity for the $p^rG$-congruences. Specifically, given a congruence of the form $my\equiv_{K_m,p^k} t(\bar x) + g$ where $p|m$, we may replace it by $\frac{m}{p}y\equiv_{K_m,p^{k-1}} \frac{1}{p}(t(\bar x) + g)$, and we may repeat this until $p\nmid m$.
        \nl\nl
        We show by induction on $m$ that we may uniformly remove and replace the congruence conditions coming from $K_i+p^jG$ with $\alpha^{[p^s]}+p^\ell G$ congruence conditions. In each step of the induction, we split the above lattice into two pieces as follows:
        \[\begin{tikzcd}[
        	scale cd=.5,
            font = \LARGE,
        	text height = height("$H_1+p^{r-1}G$"),
        	text width = width("$H_1+p^{r-1}G$"),
        	align = center
        	]
        	&& {\alpha^{[p^s]}+pG} &&& G \\
        	& {\alpha^{[p^s]}+p^{r-1}G} && {K_1+pG} && {K_m+pG} \\
        	{\alpha^{[p^s]}+p^rG} && {K_1+p^{r-1}G} && {K_{m-1}+pG} & {K_m+p^2G} \\
        	& {K_1+p^rG} && {K_{m-1}+p^{r-1}G} && {K_m+p^{r-1}G} \\
        	&& {K_{m-1}+p^rG} &&& {K_m+p^rG}
        	\arrow[dashed, no head, from=1-3, to=2-2]
        	\arrow[no head, from=1-3, to=2-4]
        	\arrow[no head, from=1-6, to=2-6]
        	\arrow[no head, from=2-2, to=3-1]
        	\arrow[no head, from=2-2, to=3-3]
        	\arrow[dashed, no head, from=2-4, to=3-3]
        	\arrow[no head, from=2-4, to=3-5]
        	\arrow[no head, from=2-6, to=3-6]
        	\arrow[no head, from=3-1, to=4-2]
        	\arrow[no head, from=3-3, to=4-2]
        	\arrow[dotted, no head, from=3-3, to=4-4]
        	\arrow[dashed, no head, from=3-5, to=4-4]
        	\arrow[dashed, no head, from=3-6, to=4-6]
        	\arrow[dotted, no head, from=4-2, to=5-3]
        	\arrow[no head, from=4-4, to=5-3]
        	\arrow[no head, from=4-6, to=5-6]
        \end{tikzcd}\]
        with $K_{m-1}+p^rG\leq \alpha^{[p^s]}+p^rG$. Note that we may assume that the tower on the right contains at most $1$ positive congruence relation, and also that the negative congruence relations are disjoint (since they are either disjoint or contained in one another, and this is a definable condition). So, if we set the congruence conditions from the right tower give rise to a subset of the form
        \begin{align*}
            Y = (a_0+M_0)\setminus \bigcup\limits_{1\leq i\leq k}(a_i+M_i)
        \end{align*}
        where $M_i = K_m+p^{r_i}G$ for some $0\leq r_i\leq r$. Then, noting that what remains on the left is a union of $K_{m-1}+p^rG$-cosets, we may replace $Y$ by $Y+K_{m-1}+p^rG=Y+K_{m-1}$. Hence, we are in a position to make use of \Cref{QE 3.2} with $G'=K_{m-1}+p^rG$, so $y\in X+K_{m-1}+p^rG$ iff both of the following hold:
        \begin{align}
            \setcounter{equation}{0}
        	y-a_0&\in M_0 + K_{m-1} +p^rG \\
        	\sum\limits_{\substack{1\leq i\leq k \\ y-a_i\in M_i+K_{m-1}+p^rG}} &\frac{1}{[M_0\cap (K_{m-1}+p^rG):M_i\cap (K_{m-1}+p^rG)] } < 1
        \end{align}
        Now, if we have no positive congruence relations, (1) is equivalent to $y\in G$, and otherwise it is equivalent to $my \equiv_{K_{m-1}, p^{r_0}} t(\bar x) + g$. We now need only show that (2) is definable. To do this, first note that $[M_0\cap (K_{m-1}\cap p^rG):M_i\cap (K_m+p^rG)]$ is either infinite or a power of $p$, since these are both intermediate subgroups between $G$ and $p^rG$, and $\frac{G}{p^rG}$ is a $p$-group, and any quotient of subgroups of a $p$-group is again a $p$-group. Then by \Cref{QE 3.3}, (2) is equivalent to a finite boolean combination of formulas of the form $[M_0\cap (K_{m-1}+p^rG):M_i\cap (K_{m-1}+p^rG)] = p^j$, and is hence also definable, and can be added as a case distinction fixing the set that the sum in (2) runs over.
    \end{claimproof}
    \noindent Now, for fixed $\theta$, this gives rise to congruence conditions coming from a lattice of the following form, where $k<s$ is maximal such that a congruence of the form $r_iy\equiv_{\alpha,{p^j}}^{[p^k]} t_i(\bar x)$ appears, $h<s$ is minimal such that such a congruence appears, $\beta>\alpha$ is minimal such that a congruence involving $\beta$ appears, and $\gamma>\alpha$ is maximal such that a congruence involving $\gamma$ appears:
	\[\begin{tikzcd}[
		scale cd=.4,
        font = \huge,
		text height = height("$\alpha^{[p^k]}+p^{r-1}G$"),
		text width = width("$\alpha^{[p^k]}+p^{r-1}G$"),
		align = center
		]
		&& {\gamma^{[p]}+pG} \\
		& {\gamma^{[p]}+p^{r-1}G} && {\beta+pG} \\
		{\gamma^{[p]}+p^rG} && {\beta+p^{r-1}G} && {\alpha^{[p^h]}+pG} \\
		& {\beta+p^rG} && {\alpha^{[p^h]}+p^{r-1}G} && {\alpha^{[p^k]}+pG} \\
		&& {\alpha^{[p^h]}+p^rG} && {\alpha^{[p^k]}+p^{r-1}G} && {\alpha^{[p^s]}+pG} \\
		&&& {\alpha^{[p^k]}+p^rG} && {\alpha^{[p^s]}+p^{r-1}G} \\
		&&&& {\alpha^{[p^s]}+p^rG}
		\arrow[dashed, no head, from=1-3, to=2-2]
		\arrow[dotted, no head, from=1-3, to=2-4]
		\arrow[no head, from=2-2, to=3-1]
		\arrow[dotted, no head, from=2-2, to=3-3]
		\arrow[dashed, no head, from=2-4, to=3-3]
		\arrow[no head, from=2-4, to=3-5]
		\arrow[dotted, no head, from=3-1, to=4-2]
		\arrow[no head, from=3-3, to=4-2]
		\arrow[no head, from=3-3, to=4-4]
		\arrow[dashed, no head, from=3-5, to=4-4]
		\arrow[dotted, no head, from=3-5, to=4-6]
		\arrow[no head, from=4-2, to=5-3]
		\arrow[no head, from=4-4, to=5-3]
		\arrow[dotted, no head, from=4-4, to=5-5]
		\arrow[dashed, no head, from=4-6, to=5-5]
		\arrow[no head, from=4-6, to=5-7]
		\arrow[dotted, no head, from=5-3, to=6-4]
		\arrow[no head, from=5-5, to=6-4]
		\arrow[no head, from=5-5, to=6-6]
		\arrow[dashed, no head, from=6-6, to=5-7]
		\arrow[no head, from=7-5, to=6-4]
		\arrow[no head, from=7-5, to=6-6]
	\end{tikzcd}\]
	First, suppose that no congruence from a subgroup of the form $\alpha^{[p^\ell]}+p^jG$ appears for $\ell < s$. Then, we have the following arrangement of subgroups:

    \[\begin{tikzcd}[
			scale cd=.8,
            font = \large,
			text height = height("$H_1+p^{r-1}G$"),
			text width = width("$H_1+p^{r-1}G$"),
			align = center
		]
		&& {\gamma^{[p]}+pG} \\
		& {\gamma^{[p]}+p^{r-1}G} && {\beta+pG} \\
		{\gamma^{[p]}+p^rG} && {\beta+p^{r-1}G} && \alpha^{[p^s]}+pG \\
		& {\beta+p^rG} && {\alpha^{[p^s]}+p^{r-1}G} \\
		&& {\alpha^{[p^s]}+\beta\cap p^{r-1}G + p^rG} \\
		&& {\alpha^{[p^s]}+p^rG}
		\arrow[dashed, no head, from=1-3, to=2-2]
		\arrow[dotted, no head, from=1-3, to=2-4]
		\arrow[no head, from=2-2, to=3-1]
		\arrow[dotted, no head, from=2-2, to=3-3]
		\arrow[dashed, no head, from=2-4, to=3-3]
		\arrow[no head, from=2-4, to=3-5]
		\arrow[dotted, no head, from=3-1, to=4-2]
		\arrow[no head, from=3-3, to=4-2]
		\arrow[no head, from=3-3, to=4-4]
		\arrow[dashed, no head, from=3-5, to=4-4]
		\arrow[no head, from=4-2, to=5-3]
		\arrow[no head, from=4-4, to=5-3]
		\arrow[no head, from=5-3, to=6-3]
	\end{tikzcd}\]
    But, by \Cref{[b^p^s :a^p^s] infinite}, $[\alpha^{[p^s]}+\beta\cap p^{r-1}G + p^rG:\alpha^{[p^s]}+p^rG]=\infty$. If there is no positive congruence of the form $ry\equiv_{\alpha, p^r}^{[p^s]}t(\bar x)$ appearing, this conjunction of congruence conditions results in at least one coset of $\alpha^{[p^s]}+\beta\cap p^{r-1}G + p^rG$ with only a finite number of $\alpha^{[p^s]}+p^rG$ cosets being excluded - which results in an infinite number of $\alpha^{[p^s]}+p^rG$ cosets. Hence, there must be such a positive congruence condition, and $f$ must be given by this single linear congruence.
    \nl\nl
    We can now simplify the setup further; consider the function $f_h:G^d\rightarrow \frac{G}{\alpha^{[p^h]}+p^rG}$ given by $f_h(x) = f(x) + \alpha^{[p^h]}$. By following the process for removing congruences on smaller subgroups, we see that this function is piecewise given by single congruence conditions $ry\equiv_{\alpha,p^r}^{[p^h]}t(\bar x)$ where $ry\equiv_{\alpha,p^r}^{[p^j]}t(\bar x)$ is a congruence appearing in the definition of $f$ for some $j\geq h$.
    \nl\nl
    Hence we may remove all congruences relating to some $\beta > \alpha$, since all of these congruences must be implied by $ry\equiv_{\alpha,p^r}^{[p^j]}t(\bar x)$. Immediately we see that $f$ is therefore piecewise defined by purely a conjunctions of literals with atoms $ry\diamond_\alpha t(\bar x)$ with $\diamond\in{\equiv_{p^j},\equiv_{p^j}^{[p^i]}}$ and $1\leq j\leq r, j\leq i\leq s$; there is no longer a family-union we need to consider, and we may take the defining formula to be of the form $\chi(\bar x, y)$
    \nl\nl
	Now, let $\ell\geq 1$ be maximal such that:
	\begin{enumerate}[(1)]
		\item There is a congruence of the form $ry\equiv_{\alpha,{p^j}}^{[p^\ell]} t(\bar x)$ appearing
		\item $[\alpha^{[p^\ell]}+p^rG:\alpha^{[p^s]}+p^rG] = \infty$
	\end{enumerate}
    If there is no such $\ell$, then the result is immediate by once again considering $f_h(x) = f(x) + \alpha^{[p^h]}$ and repeating the previous observation that there is a positive congruence coming from some $\alpha^{[p^m]}+p^rG$.
	\nl\nl
	Otherwise, let $i$ be minimal such that 
    \begin{align*}
        \left[(\alpha^{[p^s]}+p^iG)\cap(\alpha^{[p^\ell]} + p^r G): \alpha^{[p^s]}+p^rG\right] <\infty
    \end{align*}

	\begin{claim}
		There are some $j,m$ with $r \geq j \geq i$, $\ell > m \geq s$ and a positive congruence condition $C(\bar x,y):=$\cs$ ry\equiv_{\alpha,{p^j}}^{[p^m]} t(\bar x)$\ce , such that $C(\bar x, y)$ is a literal appearing as a conjunct of $\chi(\bar x, y)$
	\end{claim}
	\begin{claimproof}
		We will show that if the claim does not hold, then a $\chi(\bar x, y)$ is in fact an infinite union of $\alpha^{[p^s]}+p^rG$ cosets.
		\nl\nl
		Suppose no such congruence condition exists. Then, any congruence condition of the form $ry\equiv_{\alpha,{p^j}}^{[p^m]} t(\bar x)$ with $j,m$ as above must appear as a negative congruence condition; note that we may assume at least one such exists, as this only decreases the number of cosets we will end up with. Next, we may replace all instances of $\equiv_{\alpha, p^j}^{[p^m]}$ with finitely many instances of $\equiv_{\alpha, p^j}^{[p^s]}$, since by choice of $l$ in combination with \Cref{Describing a^p^j - a^p^s quotients} we have that $[\alpha^{[p^m]}+p^iG:\alpha^{[p^s]}+p^iG] <\infty$. Now, we may replace instances of $\equiv_{\alpha, p^j}^{[p^s]}$ with $\equiv_{\alpha, p^i}^{[p^s]}$, since this also only decreases the number of resulting cosets.
		\nl\nl
		Now, we are in a situation where we have some union of $(\alpha^{[p^l]}+p^rG)\cap(\alpha^{[p^s]}+p^{i-1}G)$ cosets intersected with a cofinite number of cosets of $\alpha^{[p^s]}+p^iG$. But, by choice of $i$, we have that 
        \begin{align*}
		    \left[(\alpha^{[p^l]}+p^rG)\cap(\alpha^{[p^s]}+p^{i-1}G):(\alpha^{[p^l]}+p^rG)\cap(\alpha^{[p^s]}+p^iG)\right]=\infty
		\end{align*}
		And so this intersections results in infinitely many $(\alpha^{[p^l]}+p^rG)\cap(\alpha^{[p^s]}+p^iG)$ cosets, and hence infinitely many $\alpha^{p^s}+p^rG$ cosets.
	\end{claimproof}
	\noindent Let $j,m$ be a witness to the above claim with $j$ maximal, with $C(\bar x, y)$ as in the claim. We may write $\chi(\bar x, y)=\psi(\bar x, y)\wedge \phi(\bar x, y)$, where $\phi(\bar x, y)$ contains all congruences relating to $\alpha^{[p^t]}$ with $l< t\leq s$, and $\psi(\bar x, y, \bar\theta)$ contains all remaining congruences. Now, $\chi(\bar x, G) + \alpha^{[p^l]}+p^rG$ defines a function $g:G^d\rightarrow (\frac{G}{\alpha^{[p^l]}+p^rG})^{(n)}$ for some $n\leq e$, and so by induction we may find
	\begin{itemize}
		\item $B$-definable sets $\{X_h\}_{h\leq k}$
		\item $1\leq l_{1},\dots,l_{n},v_{1},\dots,v_{m}\leq l$
		\item $1\leq j_{1},\dots,j_{n},w_{1},\dots,w_{m}\leq r$
		\item $f_{t}:G^d\rightarrow G$ for $1\leq t\leq n$
		\item $g_{t}:G^d\rightarrow G$ for $1\leq t\leq m$
	\end{itemize}
	as in the theorem statement (with $\mathcal{A} = \mathcal{A}_1$ and $k=1$) such that $g = \bigcup\limits_{h\leq k} ((\bigcap\limits_{t\leq n}f^{H_{t}}_{t}|_{X_1})\setminus(\bigcup\limits_{t\leq m}g^{K_{t}}_{t}|_{X_1}))$ and $\left[\bigcap\limits_{1\leq t\leq n}(\alpha^{[p^{l_{t}}]}+p^{i_{t}}G):\alpha^{p^l}+p^rG\right]<\infty$.
	\nl\nl
	Furthermore, we note that $\chi(\bar x, G) + \alpha^{[p^s]}+p^rG = \psi(\bar x, G)\cap (\phi(\bar x, y) + \alpha^{[p^l]})$, and in particular we conclude that $\chi(\bar x, y) = g(\bar x)\cap\phi(\bar x, y)$, and so upon restricting to $X_h$ we find that $\chi(\bar x, y, \bar\theta) = g_h(\bar x, y)\cap\phi(\bar x, y)$, where $g_h = (\bigcap\limits_{t\leq n_h}f_{h,t}|_{X_h})\setminus(\bigcup\limits_{t\leq m_h}g_{h,t}|_{X_h})$.
    \nl\nl
	Also, note that $\phi(\bar x, y)$ contains $C(\bar x, y)$, and so the intersection of all of the subgroups containing a positive congruence is contained in $\bigcap\limits_{1\leq t\leq n_j}(
    \alpha^{[p^{l_{j,t}}]}+p^{i_{j,t}})\cap (\alpha^{[p^m]}+p^jG)$, which is a finite index extension of $\alpha^{[p^s]}$. And so taking $X_h$ given by the earlier case distinctions and removal of the first disjunction we arrive at the theorem in the case of fixed $\alpha$.
    \nl\nl
    All that remains is to generalise this from fixed $\alpha$ to a family $\mathcal{A}$. Note, however, that regardless of the indexes of $\alpha^{[p^m]}+p^jG$ over $\alpha^{[p^s]}+p^jG$, there are only finitely many possible choices of $f_{h,t},g_{h,t}$ (coming from terms appearing in the original definition), as well as choices for $l_{h,t},j_{h,t},v_{h,t},w_{h,t}$. The case distinctions are also clearly uniformly definable from $\alpha$, and so we may partition $\mathcal{A}$ into definable subsets based on which of these choices and which families of case distinctions give rise to $g_\alpha$ with $g_\alpha \subseteq f_\alpha$. Note that the finite index requirement is automatic for any configuration that gives a subset of $f$; if the index of the intersections from positive congruences is not finite then we do not obtain $g_\alpha\subseteq f_\alpha$, as seen earlier in the proof.
	
\end{proof}

\section{Functions to quotients by subgroups of the form $D+p^rG$ with $D\leq G$ a definable convex subgroup}\label{D+p^rG section}

The goal of this section is to prove \Cref{main} in the case where $H =
D + p^rG$ with $D$ any convex definable subgroup of G. It will be helpful to prove a lot of supplementary facts regarding the interactions between various definable subgroups, each of which contains $D+p^rG$. For notational simplicity we prove these facts for $p^rG$, noting that they are immediately applicable to $D+p^rG$ as well since $G/D$ is a convex subgroup, and $D+p^rG$ is the lift of $p^r(G/D)$. To start with, it will be helpful to consider how the following lattice of subgroups behaves:
\[\begin{tikzcd}[
	scale cd = .8,
    font = \large,
	text height = height("$K+p^{r-1}G$"),
	text width = width("$K+p^{r-1}G$"),
	align = center
	]
	&& {L+pG} \\
	& {L+p^{r-1}G} && {K+pG} \\
	{L+p^rG} && {K+p^{r-1}G} && {pG} \\
	& {K+p^rG} && {p^{r-1}G} \\
	&& {p^rG}
	\arrow[dashed, no head, from=1-3, to=2-2]
	\arrow[dotted, no head, from=1-3, to=2-4]
	\arrow[no head, from=2-2, to=3-1]
	\arrow[dotted, no head, from=2-2, to=3-3]
	\arrow[dashed, no head, from=2-4, to=3-3]
	\arrow[dotted, no head, from=2-4, to=3-5]
	\arrow[dotted, no head, from=3-1, to=4-2]
	\arrow[no head, from=3-3, to=4-2]
	\arrow[dotted, no head, from=3-3, to=4-4]
	\arrow[dashed, no head, from=3-5, to=4-4]
	\arrow[dotted, no head, from=4-2, to=5-3]
	\arrow[no head, from=5-3, to=4-4]
\end{tikzcd}\]
Where $K,L$ are of the form $\alpha$ or $\alpha^{[p^s]}$, noting that for $\alpha < \beta$ and $s,k\in\mathbb{N}$ we have 
\begin{align*}
    \alpha+p^kG \subseteq \alpha^{[p^s]}+p^kG\subseteq\alpha^{[p^{s-1}]}+p^kG\subseteq\beta+p^kG
\end{align*}
\noindent Of particular interest will be those subgroups $K$ where $[K+p^iG:p^iG]$ is infinite for some $i$. As we will see later, if this is not the case then congruence conditions involving $K$ can be reduced to a finite union of congruence conditions not involving $K$, which we can then separate into individual congruence conditions at the cost of moving to $acl^{eq}(\emptyset)$. In particular, we would like to understand the following:
\begin{itemize}
	\item How does $[K+p^rG:p^rG]$ relate to $([K+p^iG:p^iG]: {i<r})$
	\item How does $[(K+p^rG)\cap(p^{r-1}G):p^rG]$ relate to $[K+p^rG:p^rG]$ and $[p^{r-1}G:p^rG]$
	\item How does $[p^{r-1}G:p^rG]$ relate to $[G:p^rG]$ when it is non-trivial
\end{itemize}
First, as a reminder, we have the following:
\begin{lemma*}
	Let $D\leq G$ be a convex subgroup, and $s\geq r$. Then $$D^{[p^s]}\cap p^rG = p^rD^{[p^{s-r}]}$$
\end{lemma*}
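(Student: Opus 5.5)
The plan is to compute $D^{[p^s]}\cap p^rG$ directly from the description of $D^{[p^s]}$ as an intersection of subgroups $\beta+p^sG$. The only structural facts needed are that convex subgroups are pure, that $G$ is torsion-free, and that multiplication by $p^r$ is injective, so it commutes with arbitrary intersections. This is the restated \Cref{keylemma}, and I would follow the same route. By \Cref{def D^{[n]}}, write
\begin{align*}
D^{[p^s]} = \bigcap_{\beta} (\beta + p^sG),
\end{align*}
where $\beta$ ranges over the relevant convex subgroups containing $D$. The argument will not depend on which index family is used, as long as it is the same on both sides: either all convex $H\geq D$, or the elements of $\mathcal{S}_{p}$ above $D$ via the remark citing \cite{QE}.

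First step: intersecting with $p^rG$ distributes over the intersection. So it suffices to show, for each convex $\beta$,
\begin{align*}
(\beta+p^sG)\cap p^rG = p^r\beta + p^sG = p^r(\beta + p^{s-r}G).
\end{align*}
For the inclusion $\supseteq$, note that $p^sG\subseteq p^rG$ since $s\geq r$. For $\subseteq$, take $x=b+p^sg\in p^rG$ with $b\in\beta$. Then $b = x-p^sg\in p^rG\cap\beta$. Since $\beta$ is convex, it is pure, so $b\in p^r\beta$, as required. The second equality is just factoring out $p^r$.

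Second step: pull $p^r$ outside the intersection, using
\begin{align*}
\bigcap_\beta p^r(\beta+p^{s-r}G) = p^r\bigcap_\beta(\beta+p^{s-r}G).
\end{align*}
This holds because $x\mapsto p^rx$ is an injective homomorphism, since $G$ is torsion-free, and injective maps preserve intersections of images. The right-hand intersection is $D^{[p^{s-r}]}$ by definition. When $s=r$, it is the intersection of the convex subgroups $\beta\geq D$ themselves, using the convention $p^0G=G$ in the obvious way.

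The only point needing care is the purity of convex subgroups in the first step. The argument: if $p^r c\in\beta$ with $c\geq 0$, then $0\leq c\leq p^rc$, so $c\in\beta$ by convexity. It is also worth being careful that the index family for $D^{[p^{s-r}]}$ matches the one used for $D^{[p^s]}$. If one uses the $\mathcal{S}_p$ version, I would justify it via Lemma~2.2 of \cite{QE}. Alternatively, one can simply work with all convex $H\geq D$ throughout, where the identity is immediate.
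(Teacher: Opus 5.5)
Your proof is correct and is essentially the paper's own argument. You distribute $p^rG$ over the intersection, use $p^sG\subseteq p^rG$ together with purity of the convex $\beta$ to get $(\beta+p^sG)\cap p^rG=p^r\beta+p^sG=p^r(\beta+p^{s-r}G)$, and then pull $p^r$ outside using torsion-freeness. The paper also silently indexes both sides by $\beta\in\mathcal{S}_p$, $\beta>D$, which is the matching-index-family point you flag.

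There is one slip in your aside about $s=r$. There $\beta+p^0G=G$, so the right-hand intersection is $G$, not $\bigcap\beta$. The lemma then just says $D^{[p^s]}\cap p^sG=p^sG$, which holds since $p^sG\subseteq D^{[p^s]}$. The main argument is unaffected.
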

\noindent We now continue to prove various other supplementary facts.

\begin{proposition}\label{infiniterightquotientsconvex}
	Let $r\geq 1$. Then $\frac{G}{pG}\cong\frac{p^{r-1}G}{p^rG}$, and hence $[G:p^rG]=[G:pG]^r$
\end{proposition}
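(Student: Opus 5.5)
The plan is to realise the isomorphism as the map $\varphi: G \to \frac{p^{r-1}G}{p^rG}$, $x \mapsto p^{r-1}x + p^rG$, apply the first isomorphism theorem, and then obtain the index formula by multiplying indices along the chain $G \supseteq pG \supseteq \dots \supseteq p^rG$. This is the case $K=\{0\}$ of the isomorphism $\frac{G}{K+pG}\cong\frac{K+p^iG}{K+p^{i+1}G}$ already used in the proof of \Cref{equal p-divisibility}, so the argument mirrors that one.

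First, $\varphi$ is a group homomorphism because $G$ is abelian. It is surjective by definition of $p^{r-1}G$: every element of $p^{r-1}G$ has the form $p^{r-1}x$. For the kernel, $x \in \ker\varphi$ iff $p^{r-1}x = p^r g$ for some $g\in G$. Then $p^{r-1}(x - pg) = 0$. Since $G$ is an ordered abelian group it is torsion-free, so $x = pg \in pG$. Conversely $pG \subseteq \ker\varphi$ is clear. Hence $\ker\varphi = pG$, which gives $\frac{G}{pG}\cong\frac{p^{r-1}G}{p^rG}$.

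For the index statement, apply the isomorphism with each $i$ in place of $r$, for $1\leq i\leq r$. This gives $[p^{i-1}G:p^iG] = [G:pG]$. Multiplicativity of indices along the chain then yields
\begin{align*}
[G:p^rG] = \prod_{1\leq i\leq r}[p^{i-1}G:p^iG] = [G:pG]^r,
\end{align*}
read as cardinal arithmetic. If $[G:pG]$ is infinite, both sides are infinite: the left side is at least $[G:pG]$ because $p^rG\subseteq pG$.

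There is no real obstacle here. The only point needing care is the kernel computation, where torsion-freeness of $G$ is what allows us to cancel $p^{r-1}$. This is the same fact invoked as ``$G$ is torsion-free'' in \Cref{keylemma}.
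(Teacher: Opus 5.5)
Your proposal is correct and follows essentially the same route as the paper: you use the map $x+pG\mapsto p^{r-1}x+p^rG$, which is an isomorphism because $G$ is torsion-free, and then multiply indices along the chain $G\supseteq pG\supseteq\dots\supseteq p^rG$. You simply spell out the kernel computation and the cardinal-arithmetic reading of the index formula, which the paper leaves implicit.
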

\begin{proof}
	The map $x+pG\mapsto p^{r-1}x+p^rG$ is an isomorphism since $G$ is torsion-free. The second part follows from the fact that $$[G:p^rG]=\prod\limits_{1\leq i \leq r}[p^{i-1}G:p^iG]$$
\end{proof}
\begin{note}
	From this we also observe that $[G:pG] = \infty \iff [G:p^rG] = \infty$.
\end{note}

\noindent In order to deal with the questions involving $\frac{K+p^rG}{p^rG}$, it will be helpful to consider the following arrangement of subgroups:
\[\begin{tikzcd}
	{K+p^rG} & {pG} \\
	{(K\cap pG) +p^rG} & {p^2G} \\
	{(K\cap p^2G) +p^rG} & {p^{r-1}G} \\
	{(K\cap p^{r-1}G)+p^rG} & {p^rG} \\
	{p^rG}
	\arrow[no head, from=1-1, to=2-1]
	\arrow[no head, from=1-2, to=2-1]
	\arrow[no head, from=1-2, to=2-2]
	\arrow[no head, from=2-1, to=3-1]
	\arrow[no head, from=2-2, to=3-1]
	\arrow[dotted, no head, from=2-2, to=3-2]
	\arrow[dotted, no head, from=3-1, to=4-1]
	\arrow[no head, from=3-2, to=4-1]
	\arrow[no head, from=3-2, to=4-2]
	\arrow[no head, from=4-1, to=5-1]
	\arrow[no head, from=4-2, to=5-1]
\end{tikzcd}\]
Noting that $(K+p^rG)\cap p^iG =  (K\cap p^iG)+p^rG$ for $i\leq r$. We will prove the following:
\begin{proposition}\label{DescendingInfiniteQuotients}
	If $K\leq G$ is either convex or of the form $\alpha^{[p^s]}$, then
	$$[K+p^rG:p^rG]=\infty \iff [(K\cap p^{r-1}G)+p^rG:p^rG] = \infty$$
\end{proposition}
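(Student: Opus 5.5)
The backward direction is immediate: $(K\cap p^{r-1}G)+p^rG\leq K+p^rG$, so if the smaller quotient over $p^rG$ is infinite, so is the larger. For the forward direction I use the chain in the diagram above,
\begin{align*}
K+p^rG \supseteq (K\cap pG)+p^rG\supseteq\dots\supseteq (K\cap p^{r-1}G)+p^rG\supseteq p^rG .
\end{align*}
If $[K+p^rG:p^rG]=\infty$, then some consecutive quotient in this chain is infinite. It therefore suffices to show that the successive quotients do not decrease as we go down the chain:
\begin{align*}
\left[(K\cap p^{i}G)+p^rG:(K\cap p^{i+1}G)+p^rG\right]\leq \left[(K\cap p^{r-1}G)+p^rG:p^rG\right]
\end{align*}
for every $0\leq i\leq r-1$.

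To compare the quotients, I identify each one with an explicit group by the isomorphism theorems, exactly as in the claim of \Cref{infiniterightquotientsnonconvex}. Since $(K+p^rG)\cap p^iG=(K\cap p^iG)+p^rG$, the $i$-th quotient is isomorphic to
\begin{align*}
\frac{K\cap p^iG}{(K\cap p^{i+1}G)+(K\cap p^rG)} = \frac{K\cap p^iG}{K\cap p^{i+1}G},
\end{align*}
the equality holding because $i+1\leq r$.

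\textbf{Convex case.} $K$ is pure, so $K\cap p^iG=p^iK$. The $i$-th quotient is then $p^iK/p^{i+1}K\cong K/pK$, using torsion-freeness as in \Cref{infiniterightquotientsconvex}. All the quotients are isomorphic, so if their product is infinite then each one is, and in particular the bottom one $[(K\cap p^{r-1}G)+p^rG:p^rG]$ is infinite.

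\textbf{Case $K=\alpha^{[p^s]}$.} This needs \Cref{keylemma}. For $i\leq s$ we get $K\cap p^iG=p^i\alpha^{[p^{s-i}]}$, and multiplying by $p^{-i}$ gives
\begin{align*}
\frac{K\cap p^iG}{K\cap p^{i+1}G}\cong\frac{\alpha^{[p^{s-i}]}}{p\,\alpha^{[p^{s-i-1}]}},
\end{align*}
where $\alpha^{[p^0]}=G$. This quotient has the subquotient $\alpha^{[p^{s-i}]}/p\alpha^{[p^{s-i}]}$. The remaining range $i>s$ can be handled by $\alpha^{[p^s]}\cap p^iG=p^s(\alpha^{[1]}\cap p^{i-s}G)$, which reduces it to the convex-like computation with $G$.

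\textbf{Main obstacle.} The delicate step is showing monotonicity in $i$ in this second case. I would aim to prove that
\begin{align*}
\left[\alpha^{[p^{t}]}:p\,\alpha^{[p^{t-1}]}\right]
\end{align*}
is nondecreasing as $t$ decreases. Since $\alpha^{[p^t]}\subseteq\alpha^{[p^{t-1}]}$, the map $x\mapsto x$ induces a homomorphism
\begin{align*}
\alpha^{[p^t]}/p\alpha^{[p^{t-1}]}\to\alpha^{[p^{t-1}]}/p\alpha^{[p^{t-2}]}.
\end{align*}
First I would try to show that this homomorphism is injective, which amounts to $\alpha^{[p^t]}\cap p\alpha^{[p^{t-2}]}=p\alpha^{[p^{t-1}]}$, again by \Cref{keylemma}. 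If that fails, the fallback is a cruder bound: show that an infinite quotient at level $i$ forces infinitely many distinct $\beta+pG$ above $\alpha$, using \Cref{nonconvex conditions} and \Cref{same number above}. That in turn makes the bottom quotient $G/\alpha^{[p]}+pG$-type index infinite, via \Cref{infiniterightquotientsnonconvex}.
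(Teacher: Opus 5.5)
Your proposal is correct and is essentially the paper's argument. It uses the same chain $(K\cap p^iG)+p^rG$. The convex case goes through purity, giving $K/pK$ at every level. For $K=\alpha^{[p^s]}$ the factors are computed with \Cref{keylemma}; the paper writes them as $[\alpha^{[p^{s-i}]}+pG:pG]$ and bounds each by the bottom factor $[\alpha^{[p^{s-r+1}]}+pG:pG]$.

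The step you hedge on needs no fallback. By \Cref{keylemma}, $\alpha^{[p^t]}\cap p\alpha^{[p^{t-2}]}\subseteq\alpha^{[p^t]}\cap pG=p\alpha^{[p^{t-1}]}$, and the reverse inclusion is clear, so your map is injective. Equivalently, $\alpha^{[p^t]}/p\alpha^{[p^{t-1}]}\cong(\alpha^{[p^t]}+pG)/pG$, and monotonicity in $t$ is then just the inclusion $\alpha^{[p^t]}\subseteq\alpha^{[p^{t-1}]}$. Incidentally, this factor is a quotient of $\alpha^{[p^t]}/p\alpha^{[p^t]}$, not a group containing it as a subquotient as you wrote, though you never use that remark.
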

\begin{proof}
    We split the proof into two cases; first the case where $K$ is convex in \Cref{leftconvexrelationship}, and then the case where $K=\alpha^{[p^s]}$ in \Cref{leftconvexnonconvexrelationship2}
\end{proof}
\begin{lemma}\label{leftconvexrelationship}
	Suppose $ K \leq G$ is convex. Then $$\frac{(K\cap p^{r-1}G) + p^rG}{p^rG}\cong\frac{K}{pK}$$ and $$[K+p^rG:p^rG]= [K:pK]^r$$ and hence $$[K+p^rG:p^rG]=\infty \iff [(K\cap p^{r-1}G)+p^rG:p^rG] = \infty$$
\end{lemma}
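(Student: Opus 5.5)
We prove the three assertions in order, using throughout that a convex subgroup $K$ is pure in $G$: $K\cap p^iG=p^iK$ for every $i$. Purity holds because $G/K$ is an ordered abelian group, hence torsion-free, so $p^ig\in K$ forces $g\in K$.

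\textbf{First isomorphism.} By purity, $(K\cap p^{r-1}G)+p^rG=p^{r-1}K+p^rG$. The second isomorphism theorem then gives
\begin{align*}
\frac{p^{r-1}K+p^rG}{p^rG}\cong\frac{p^{r-1}K}{p^{r-1}K\cap p^rG}=\frac{p^{r-1}K}{p^rK},
\end{align*}
where the equality $p^{r-1}K\cap p^rG=p^rK$ is again purity: it is $K\cap p^rG$, since $p^rK\subseteq p^{r-1}K\subseteq K$. Since $G$ is torsion-free, multiplication by $p^{r-1}$ is an isomorphism $K\to p^{r-1}K$ carrying $pK$ onto $p^rK$. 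Hence $\frac{p^{r-1}K}{p^rK}\cong\frac{K}{pK}$, which is the first claim.

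\textbf{Index formula.} We filter $K+p^rG$ by the chain from the diagram,
\begin{align*}
K+p^rG\supseteq (K\cap pG)+p^rG\supseteq\cdots\supseteq (K\cap p^{r-1}G)+p^rG\supseteq p^rG.
\end{align*}
By purity the terms are $p^iK+p^rG$ for $0\le i\le r$. Fix $i<r$. Then $p^iK\cap(p^{i+1}K+p^rG)=p^{i+1}K+(p^iK\cap p^rG)=p^{i+1}K+p^rK=p^{i+1}K$, using the modular law and purity. Therefore
\begin{align*}
\frac{p^iK+p^rG}{p^{i+1}K+p^rG}\cong\frac{p^iK}{p^{i+1}K}\cong\frac{K}{pK},
\end{align*}
the last step by multiplication by $p^i$ and torsion-freeness. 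Multiplying the $r$ successive indices gives $[K+p^rG:p^rG]=[K:pK]^r$. Equivalently, $\frac{K+p^rG}{p^rG}\cong\frac{K}{p^rK}$ and we may apply \Cref{infiniterightquotientsconvex} inside the oag $K$. Either route works; the second is quicker.

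\textbf{Final equivalence.} By the first isomorphism, $[(K\cap p^{r-1}G)+p^rG:p^rG]=[K:pK]$. By the index formula, $[K+p^rG:p^rG]=[K:pK]^r$. Since $r\ge1$, one is infinite iff the other is. The only point requiring care is the repeated use of purity together with the modular law in the index formula; everything else is routine group theory.
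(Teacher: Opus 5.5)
Your proof is correct and follows the paper's argument: the first isomorphism is obtained exactly as in the paper (purity of $K$, the second isomorphism theorem, and torsion-freeness). Your second route to the index formula, $\frac{K+p^rG}{p^rG}\cong\frac{K}{p^rK}$ followed by \Cref{infiniterightquotientsconvex} applied inside $K$, is precisely the paper's step, and your filtration argument via the modular law is a correct unwound version of that same proposition.
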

\begin{proof}
	First, note that
	\begin{align*}
		\frac{(K\cap p^{r-1}G) +p^rG}{p^rG} \cong& \frac{K\cap p^{r-1}G}{(K\cap p^{r-1}G)\cap p^rG} \\
		=&\frac{p^{r-1}K}{K\cap(p^{r-1}G \cap p^rG)} &\textrm{as $K$ is convex, hence pure}\\
		=&\frac{p^{r-1}K}{p^rK} \\
		\cong&\frac{K}{pK} &\textrm{as $G$ is torsion-free}
	\end{align*}
	And also that
	\begin{align*}
		\frac{K+p^rG}{p^rG} \cong& \frac{K}{p^rK}
	\end{align*}
	Then by \Cref{infiniterightquotientsconvex} we get that $$[K+p^rG:p^rG]=[(K\cap p^{r-1}G)+p^rG:p^rG]^r$$
\end{proof}
\noindent Next, let us deal with the case where $K=\alpha^{[p^s]}$. First, we prove a general fact about quotients that appear in the tower of subgroups:
\begin{lemma}
	Let $\alpha\leq G$ be convex, $0\leq i < j\leq r\leq s$. Then $$\frac{(\alpha^{[p^s]}\cap p^iG) + p^rG}{(\alpha^{[p^s]}\cap p^jG)+p^rG} \cong \frac{\alpha^{[p^{s-i}]}+p^{j-i}G}{p^{j-i}G}$$
\end{lemma}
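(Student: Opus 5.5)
The plan is to reduce everything to \Cref{keylemma}, the modular law, and division by $p^i$ inside $p^iG$ (legitimate because $G$ is torsion-free). Write $A = \alpha^{[p^{s-i}]}$. Since $i\leq s$, \Cref{keylemma} gives
\begin{align*}
\alpha^{[p^s]}\cap p^iG = p^iA ,
\end{align*}
so the numerator is $N := p^iA + p^rG$. All the groups involved lie inside $p^iG$, because $i<j\leq r$.

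The first step is to identify the denominator as $N\cap p^jG$. Since $j>i$, we have $p^jG\subseteq p^iG$, and therefore
\begin{align*}
\alpha^{[p^s]}\cap p^jG = (\alpha^{[p^s]}\cap p^iG)\cap p^jG = p^iA\cap p^jG .
\end{align*}
Because $p^rG\subseteq p^jG$ (as $j\leq r$), the modular law gives
\begin{align*}
(p^iA+p^rG)\cap p^jG = (p^iA\cap p^jG)+p^rG = (\alpha^{[p^s]}\cap p^jG)+p^rG .
\end{align*}
So the denominator is exactly $N\cap p^jG$.

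The second step is the second isomorphism theorem:
\begin{align*}
\frac{N}{N\cap p^jG}\cong\frac{N+p^jG}{p^jG}=\frac{p^iA+p^jG}{p^jG},
\end{align*}
where the last equality again uses $p^rG\subseteq p^jG$.

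The third step is to transport this quotient along the map $p^iG\to G$ given by $p^ix\mapsto x$. This map is a well-defined group isomorphism because $G$ is torsion-free. It sends $p^iA+p^jG$ onto $A+p^{j-i}G$ and $p^jG$ onto $p^{j-i}G$, so it induces
\begin{align*}
\frac{p^iA+p^jG}{p^jG}\cong\frac{\alpha^{[p^{s-i}]}+p^{j-i}G}{p^{j-i}G},
\end{align*}
which is the claimed isomorphism.

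The only step needing care is the modular-law identification of the denominator. That is where the hypotheses $i<j\leq r$ are used: we need $p^jG\subseteq p^iG$ so that intersecting with $p^iG$ first does nothing, and $p^rG\subseteq p^jG$ so that the modular law applies. Note also that we do not need to evaluate $\alpha^{[p^s]}\cap p^jG$ separately as $p^j\alpha^{[p^{s-j}]}$; we only use it as $p^iA\cap p^jG$. This avoids having to check any compatibility between the two applications of \Cref{keylemma}.
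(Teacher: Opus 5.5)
Your proof is correct and is essentially the paper's argument. It uses the same ingredients: \Cref{keylemma} to get $\alpha^{[p^s]}\cap p^iG = p^i\alpha^{[p^{s-i}]}$, the modular law with the second isomorphism theorem, and division by $p^i$ via torsion-freeness. The only difference is the order of steps: the paper passes to $p^i\alpha^{[p^{s-i}]}/(\alpha^{[p^s]}\cap p^jG)$, divides by $p^i$, then applies the second isomorphism theorem, whereas you identify the denominator as $N\cap p^jG$, pass to $(p^i\alpha^{[p^{s-i}]}+p^jG)/p^jG$, and divide by $p^i$ last.
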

\begin{proof}
	\begin{align*}
		\frac{(\alpha^{[p^s]}\cap p^iG) + p^rG}{(\alpha^{[p^s]}\cap p^jG) + p^rG} \cong& \frac{\alpha^{[p^s]}\cap p^iG}{\alpha^{[p^s]}\cap p^iG\cap(\alpha^{[p^s]}\cap p^jG + p^rG)} \\
		=& \frac{p^i\alpha^{[p^{s-i}]}}{\alpha^{[p^s]}\cap p^jG} & \tx{by \Cref{keylemma}} \\
		=& \frac{p^i\alpha^{[p^{s-i}]}}{p^i(\alpha^{[p^{s-i}]}\cap p^{j-i}G)} \cong \frac{\alpha^{[p^{s-i}]}}{\alpha^{[p^{s-i}]}\cap p^{j-i}G} \\
		\cong& \frac{\alpha^{[p^{s-i}]} +p^{j-i}G}{p^{j-i}G}
	\end{align*}
\end{proof}
\noindent From this, we get the following equation for the index of $p^rG$ in $K+p^rG$:
\begin{corollary}\label{leftconvexnonconvexrelationship1}
    If $\alpha\leq G$ is convex, with $s\geq r$, then
	\begin{align*}
	    [\alpha^{[p^s]}+p^rG:p^rG] = \prod\limits_{0\leq i< r}[\alpha^{[p^{s-i}]}+pG:pG]
	\end{align*}
\end{corollary}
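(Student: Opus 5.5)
The plan is to filter $\alpha^{[p^s]}+p^rG$ down to $p^rG$ by the descending chain
\begin{align*}
\alpha^{[p^s]}+p^rG \supseteq (\alpha^{[p^s]}\cap pG)+p^rG \supseteq \dots \supseteq (\alpha^{[p^s]}\cap p^{r-1}G)+p^rG \supseteq (\alpha^{[p^s]}\cap p^rG)+p^rG = p^rG,
\end{align*}
and then multiply the indices of consecutive terms. The index of the whole chain is
\begin{align*}
[\alpha^{[p^s]}+p^rG:p^rG]=\prod_{0\leq i<r}\left[(\alpha^{[p^s]}\cap p^iG)+p^rG:(\alpha^{[p^s]}\cap p^{i+1}G)+p^rG\right],
\end{align*}
with the convention that a product containing an infinite factor is infinite.

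The last term of the chain is $p^rG$, because $\alpha^{[p^s]}\cap p^rG\subseteq p^rG$. The first term is $\alpha^{[p^s]}+p^rG$, because $\alpha^{[p^s]}\cap p^0G=\alpha^{[p^s]}$. So the product really computes $[\alpha^{[p^s]}+p^rG:p^rG]$.

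For each $0\leq i<r$, apply the preceding lemma with $j=i+1$. Its hypothesis $0\leq i<j\leq r\leq s$ holds, so the $i$-th factor is isomorphic to
\begin{align*}
\frac{\alpha^{[p^{s-i}]}+pG}{pG}.
\end{align*}
The $i$-th factor therefore equals $[\alpha^{[p^{s-i}]}+pG:pG]$. Substituting these values into the product gives exactly the claimed formula.

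The main point needing care is the multiplicativity of indices when some factors are infinite. This is just the tower law for a finite chain of subgroups: the whole index is infinite iff some consecutive index is infinite, and otherwise it is the product of the finite indices. No further obstacle is expected, since the isomorphism of consecutive quotients is already supplied by the preceding lemma, which itself rests on \Cref{keylemma}.
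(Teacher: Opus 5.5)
Your proof is correct and follows the same route as the paper: filter $\alpha^{[p^s]}+p^rG$ down to $p^rG$ through the groups $(\alpha^{[p^s]}\cap p^iG)+p^rG$, multiply the indices along the chain, and identify each consecutive quotient using the preceding lemma with $j=i+1$. Your chain uses $+p^rG$ at every step, which is right; the $+pG$ in the first displayed line of the paper's proof is a typo.
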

\begin{proof}
	\begin{align*}
		[\alpha^{[p^s]}+p^rG:p^rG] =& \prod\limits_{0\leq i< r}[(\alpha^{[p^s]}\cap p^iG)+pG:(\alpha^{[p^s]}\cap p^{i+1}G)+pG] \\
		=& \prod\limits_{0\leq i< r}[\alpha^{[p^{s-i}]}+pG:pG]
	\end{align*}
\end{proof}
\noindent Using this equation, we can bound the full index by a power of the final index, and hence complete the proof of \Cref{DescendingInfiniteQuotients}:
\begin{corollary}\label{leftconvexnonconvexrelationship2}
	$$[\alpha^{[p^s]}+p^rG:p^rG] \leq [(\alpha^{[p^s]}\cap p^{r-1}G)+p^rG:p^rG]^r$$
	And hence
	\begin{align*}
		&[\alpha^{[p^s]}+p^rG:p^rG] =\infty \\
		\iff& [(\alpha^{p^s}\cap p^{r-1}G)+p^rG:p^rG] = \infty
	\end{align*}
\end{corollary}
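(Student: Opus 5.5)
The plan is to read the inequality off the product formula in \Cref{leftconvexnonconvexrelationship1}, writing each factor as an index $[\alpha^{[p^{s-i}]}+pG:pG]$. The key point is that these factors are monotone in $i$. Since $\alpha^{[p^{t}]}$ decreases as $t$ increases (each $\beta+p^tG$ shrinks as $t$ grows, so the intersection defining $\alpha^{[p^t]}$ shrinks), we have $\alpha^{[p^{s}]}\subseteq\alpha^{[p^{s-1}]}\subseteq\dots\subseteq\alpha^{[p^{s-r+1}]}$. Hence every factor satisfies
\begin{align*}
[\alpha^{[p^{s-i}]}+pG:pG]\leq[\alpha^{[p^{s-r+1}]}+pG:pG] \qquad (0\leq i<r),
\end{align*}
and the product is at most $[\alpha^{[p^{s-r+1}]}+pG:pG]^r$.

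It remains to identify $[\alpha^{[p^{s-r+1}]}+pG:pG]$ with $[(\alpha^{[p^s]}\cap p^{r-1}G)+p^rG:p^rG]$. I would apply the preceding (unnamed) lemma with $i=r-1$ and $j=r$, which gives
\begin{align*}
\frac{(\alpha^{[p^s]}\cap p^{r-1}G)+p^rG}{(\alpha^{[p^s]}\cap p^{r}G)+p^rG}\cong\frac{\alpha^{[p^{s-r+1}]}+pG}{pG}.
\end{align*}
The denominator on the left is just $p^rG$, since $\alpha^{[p^s]}\cap p^rG\subseteq p^rG$. This yields the identification, and with it the first claim. Alternatively, one can compute directly using \Cref{keylemma}: $\alpha^{[p^s]}\cap p^{r-1}G=p^{r-1}\alpha^{[p^{s-r+1}]}$, and multiplication by $p^{r-1}$ is injective because $G$ is torsion-free.

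For the equivalence, the direction ``$\Leftarrow$'' is immediate, since $(\alpha^{[p^s]}\cap p^{r-1}G)+p^rG\leq\alpha^{[p^s]}+p^rG$. For ``$\Rightarrow$'', if the left-hand index is infinite, the bound just proved forces the finite-or-infinite quantity on the right to be infinite, because a finite number raised to the power $r$ is finite.

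The only step needing care is the monotonicity $\alpha^{[p^{t+1}]}\subseteq\alpha^{[p^{t}]}$. It follows from $p^{t+1}G\subseteq p^tG$, applied termwise inside the intersection in \Cref{def D^{[n]}}. Some care is also needed that the indexing of the factors in \Cref{leftconvexnonconvexrelationship1} runs from $s$ down to $s-r+1$, which requires $s\geq r$ so that every exponent is at least $1$.
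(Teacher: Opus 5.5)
Your proof is correct and follows the paper's argument. You identify $[(\alpha^{[p^s]}\cap p^{r-1}G)+p^rG:p^rG]$ with $[\alpha^{[p^{s-r+1}]}+pG:pG]$, use that $\alpha^{[p^t]}$ decreases in $t$ to bound each factor of the product in \Cref{leftconvexnonconvexrelationship1}, and deduce the equivalence. The identification via the preceding lemma with $i=r-1$, $j=r$, and the two directions of the equivalence, are spelled out more explicitly than in the paper.
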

\begin{proof}
	Note that if $0<i<r$, then $\alpha^{[p^{s-i}]}\subseteq \alpha^{[p^{s-r+1}]}$, and so we have
	\begin{align*}
		[(\alpha^{p^s}\cap p^{r-1}G)+p^rG:p^rG] =& [\alpha^{[p^{s-r+1}]}+pG:pG] \\
		\geq& [\alpha^{[p^{s-i}]}+pG:pG] \\
	\end{align*}
	The result then follows immediately from \Cref{leftconvexnonconvexrelationship1}.
\end{proof}

\noindent Now, we have enough to prove \Cref{main} in the remaining case:
\begin{theorem}\label{prG uniformisable version}
    Let $d,e\in\mathbb{N}_{>0}$, $D\leq G$ a definable convex subgroup, $B\subseteq G$, and $f:G^d\rightarrow (\frac{G}{D+p^rG})^{(e)}$ be a $B$-definable function. Then there are:
    \begin{itemize}
        \item $k,N,n\in \mathbb{N}$, $B$-definable sets $\{X_i\subseteq G^d\}_{i\leq k}$, $n_i,m_i\in \mathbb{N}$ for $i\leq k$
        \item  $B$-definable linear functions $f_{i,v},g_{i,w}:G^d\rightarrow G$, for $i\leq k$, $v\leq n_i$, $w\leq m_i$
        \item Definable subgroups $H_{i,v},K_{i,w}$ which are either of the form $\alpha+p^sG$ or $\alpha^{[p^t]}+p^sG$ for some $s\leq r, t$ and $\alpha\in \Spn$, with $[\alpha+p^rG:D+p^rG], [\alpha^{[p^t]}+p^rG:D+p^rG]< N$, and $[H_{i,0}:D+p^rG]<N$. 
    \end{itemize}
    Such that $f = \bigcup\limits_{i\leq k}((\bigcap\limits_{v\leq n_i}f_{i,v}^{H_{i,v}}|_{X_i})\setminus(\bigcup\limits_{w\leq m_i}g_{i,w}^{K_{i,w}}|_{X_i}))$.
\end{theorem}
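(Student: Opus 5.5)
We follow the template of the proof of \Cref{main for a^p^s+p^rG}, now with the bottom subgroup $D+p^rG$ in place of $\alpha^{[p^s]}+p^rG$. As noted at the start of this section, we may pass to $G/D$ and assume $D=\{0\}$, so the target is $\frac{G}{p^rG}$.

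\textbf{Normal form.} By \cite{QE}, after splitting into finitely many $B$-definable pieces, $f$ is defined by a formula $(\exists\bar\theta)(\xi(\bar\theta,\bar\eta)\wedge\psi(\bar x,y,\bar\theta))$, where $\xi$ lives in the spines. Restricting to the domain and replacing $y$ by $(\exists z)(y\equiv_{p^r}z\wedge\dots)$ removes the order atoms. Using the Chinese Remainder Theorem as before, $\psi$ becomes a conjunction of literals $m y\equiv_{\beta,p^j}t(\bar x)$ or $m y\equiv^{[p^k]}_{\beta,p^j}t(\bar x)$ with $1\leq j\leq r$. As in the first claim of \Cref{main for a^p^s+p^rG}, we divide out powers of $p$ from $m$. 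This makes each literal pick out or exclude a single coset, at the cost of $\mathbb{Z}[\frac1p]$-coefficients, which are harmless for the $f^{H}$ notation.

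\textbf{Finite-index bound and removing the existential.} Fix $\bar\theta$. Since $f$ takes at most $e$ values, the intersection of the subgroups carrying positive literals must have finite index over $p^rG$. If instead every relevant subgroup $K$ had $[K+p^rG:p^rG]=\infty$ in the bottom step, we argue as in the infinitude argument of \Cref{main for a^p^s+p^rG}. By \Cref{DescendingInfiniteQuotients}, $[(K\cap p^{r-1}G)+p^rG:p^rG]=\infty$, so a coset of a larger subgroup minus finitely many cosets would contain infinitely many $p^rG$-cosets, a contradiction.

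The finiteness of an index $[K+p^rG:p^rG]$ is not yet bounded uniformly in $\bar\theta$. To bound it, we use \Cref{QE 3.3}. By \Cref{leftconvexrelationship} and \Cref{leftconvexnonconvexrelationship1}, these indices are powers of $p$ of the form $[K':pK']^{r}$ or products $\prod_i[\alpha^{[p^{s-i}]}+pG:pG]$. Lemma~\ref{QE 3.2} with $G'=p^rG$ expresses membership of $y$ through a sum of reciprocals of such powers of $p$. By \Cref{QE 3.3}, only indices below some $p^N$ matter, where $N$ depends only on the formula. This yields the bound $N$ in the statement.

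Literals on subgroups $K$ with $[K+p^rG:p^rG]\geq p^N$, including infinite index, are eliminated by the induction on the number of such subgroups used in the first claim of \Cref{main for a^p^s+p^rG}. At each step we project the tower for the largest such $K$ onto the union of cosets of the next subgroup below and apply \Cref{QE 3.2}. Subgroups of the form $\alpha^{[p^t]}+p^sG$ are treated exactly as there; the results of Section~\ref{alpha^{[p^s]}+p^rG section} supply the needed index computations.

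\textbf{Uniformity in $\bar\theta$.} After this reduction, for each $\bar\theta$ the set $f(\bar x)$ is a boolean combination of literals on subgroups of index $<N$ over $p^rG$. The terms $t(\bar x)$ come from a fixed finite list in the formula, and the index data range over a finite set. We therefore make a finite case distinction, definable over $B$ through the spine parameters $\bar\eta$, on which configuration is realised. This gives the sets $X_i$, the functions $f_{i,v},g_{i,w}$ and the subgroups $H_{i,v},K_{i,w}$, and the existential over $\bar\theta$ becomes a finite union.

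\textbf{Main obstacle.} The hardest step is the elimination of literals on large-index subgroups. Unlike the case of $\alpha^{[p^s]}+p^rG$, the subgroups above $p^rG$ are not linearly arranged with a single dominant positive congruence. One must check that the application of \Cref{QE 3.2} at each inductive step keeps all coefficients in the required form and involves only indices that \Cref{QE 3.3} can threshold. The first attempt will be to order the subgroups by the position of $\alpha$ in the spine and to peel off the largest one first, mirroring the $K_m$-induction of the earlier claim.
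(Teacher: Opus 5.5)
Your proposal has a genuine gap at its central step: removing literals on subgroups of large (possibly infinite) index over $p^rG$, and with it the bound $N$. The projection device from the first claim in the proof of \Cref{main for a^p^s+p^rG} is sound only when the group $G'$ you saturate by is contained in the target subgroup. There the tower being removed lies \emph{below} $\alpha^{[p^s]}+p^rG$, the remaining literals are $G'$-invariant, and replacing $Y$ by $Y+G'$ does not change the image in the quotient. Every subgroup $K+p^jG$ you want to remove contains $p^rG$, and saturating by a group strictly larger than $p^rG$ in general changes which $p^rG$-cosets lie in $f(\bar x)$. If you take $G'=p^rG$ instead, \Cref{QE 3.2} becomes vacuous: every $M_i\supseteq G'$, so $[M_0\cap G':M_i\cap G']=1$ and no sum of reciprocals of nontrivial powers of $p$ arises. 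In any case \Cref{QE 3.3} only says which terms of such a sum matter; it does not bound the indices of the subgroups that survive. Coarse literals cannot simply be projected away, because they carry information unless something finer makes them redundant.

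That redundancy is the missing idea. For fixed $\bar\theta$ the paper produces a positive literal at the top level, $my\equiv_{H_i,p^r}t(\bar x)$ with $p\nmid m$ and $[H_i+p^rG:D+p^rG]<\infty$. It takes $j$ maximal with finite index, projects $f_{\bar\theta}$ to $G/(H_j+p^rG)$, and, when $H_j=\alpha^{[p^s]}$, invokes \Cref{main for a^p^s+p^rG}. Your observation that the positive subgroups have finite-index intersection, together with \Cref{DescendingInfiniteQuotients}, would also give this literal. That intersection contains $p^{r-1}G$ if no positive literal has level $p^r$, and otherwise contains $(H_*\cap p^{r-1}G)+p^rG$ for the smallest level-$r$ positive $H_*$.

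Once you have this literal, every literal on some $K+p^jG$ with $K+p^rG\supseteq H_i+p^rG$ becomes a condition on $\bar x$ alone and merely restricts the domain. Because the groups $K+p^rG$ form a chain, every other $K$ satisfies $K+p^rG\subseteq H_i+p^rG$ and so already has finite index. The uniform $N$ then comes from compactness over $\bar\theta$, not from \Cref{QE 3.3}. The existential over $\bar\theta$ collapses to finitely many definable cases because a finite-index extension of $D+p^rG$ in this chain is determined by its index. Your ``finite case distinction on the configuration'' silently relies on this.

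Finally, the remark at the start of the section does not license replacing $D$ by $\{0\}$. It transfers index computations, but $f$ lives on $G^d$ and its formula may involve subgroups below $D$. The paper removes those with the \Cref{QE 3.2} projection, which is exactly where that tool legitimately applies.
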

\begin{proof}
    By \cite{QE}, we may assume that $f$ is defined by a formula of the form
	\begin{align*}
		\bigvee\limits_{i}(\exists\bar\theta)(\xi_i(\bar\theta, \bar\eta)\wedge \psi_i(\bar x, y, \bar\theta))
	\end{align*}
	Where $\xi_i$ is a formula restricted to an $n$-spine, and $\psi_i$ is a conjunction of literals, with each atom of the form $r_iy\square_\alpha t_i(\bar x)$ where $t_i(\bar x)$ is a linear term and $\square \in \{<, \leq, >, \geq, \equiv_n, \equiv_n^{[m]}\}$. This $n$ gives us the $n$ in the proposition statement.
	\nl\nl
	Note that by restricting to $X_i$ defined by $(\exists y)(\exists\bar\theta)(\xi_i(\bar\theta, \bar\eta)\wedge \psi_i(\bar x, y, \bar\theta))$ we may further reduce to
	\begin{align*}
		(\exists z)(y\equiv_{p^r}z\wedge(\exists\bar\theta)(\xi(\bar\theta, \bar\eta)\wedge \psi(\bar x, z, \bar\theta)))
	\end{align*}
	And furthermore, by following the quantifier-elimination proof in \cite{QE} and making use of the Chinese Remainder Theorem, we may reduce this to 
	\begin{align*}
		(\exists\bar\theta)(\xi(\bar\theta, \bar\eta)\wedge \chi(\bar x, y, \bar\theta))
	\end{align*}
	Where $\chi$ is a conjunction of literals with atoms of the form $ry\diamond_\alpha t(\bar x)$, with $\diamond \in \{\equiv_{p^j}, \equiv_{p^j}^{[p^s]}\}$ and $1\leq j\leq r,s$. As before, we may also assume that $\models (\forall \bar x)(\forall\bar\theta)((x\in\tx{dom}(f)\wedge\xi(\bar\theta,\eta))\rightarrow(\exists^{\geq 1}y)(\chi(\bar x, y,\bar\theta)$.
    \nl\nl
    For notational simplicity we will let $H_i$ denote groups of the form $\alpha$ or $\alpha^{[p^s]}$, and if $H_i = \alpha^{[p^s]}$, then $\equiv_{H_i,p^k}$ will mean $\equiv_{p^k,\alpha}^{[p^s]}$. If $\beta\leq G$ is convex, then $\beta^{H_i} = \beta$ if $H_i=\alpha$, and $\beta^{H_i} = \beta^{[p^s]}$ if $H_i = \alpha^{[p^s]}$. $\mathcal{S}_{H_i}$ will mean $\mathcal{S}_l$ for some $l$ with $\alpha\in \mathcal{S}_l$.
	\nl\nl
    Now, as $D$ need not be an element of any of the spines, we add new quantifier-free symbols $\equiv_D$ and $\equiv_{D,p^r}$ and do our analysis in this extended language. Much as in \Cref{alpha^{[p^s]}+p^rG section}, we want to remove reference to subgroups of the form $H+p^iG\subseteq D+p^iG$. We may do this in the exact same method as in the proof of \Cref{main for a^p^s+p^rG}. Note that this both ensures that $\alpha\geq D$ and introduces formulas using the new symbols, so we also have literals $ry\diamond_Dt(\bar x)$.
    \nl\nl
	Now, we are currently in a situation where, for fixed $\bar \theta\in\xi(\mathcal{A},\bar\eta)$, we have a function $f_{\bar\theta}$ where $f_{\bar\theta}(x)$ is given by congruence conditions coming from various subgroups that form a lattice as follows:

	\[\begin{tikzcd}[
			scale cd=.8,
            font = \large,
			text height = height("$H_1+p^{r-1}G$"),
			text width = width("$H_1+p^{r-1}G$"),
			align = center
		]
		&& {H_k+pG} \\
		& {H_k+p^{r-1}G} && {H_1+pG} \\
		{H_k+p^rG} && {H_1+p^{r-1}G} && D+pG \\
		& {H_1+p^rG} && {D+p^{r-1}G} \\
		&& {(H_1\cap p^{r-1}G) +D+ p^rG} \\
		&& {D+p^rG}
		\arrow[dashed, no head, from=1-3, to=2-2]
		\arrow[dotted, no head, from=1-3, to=2-4]
		\arrow[no head, from=2-2, to=3-1]
		\arrow[dotted, no head, from=2-2, to=3-3]
		\arrow[dashed, no head, from=2-4, to=3-3]
		\arrow[no head, from=2-4, to=3-5]
		\arrow[dotted, no head, from=3-1, to=4-2]
		\arrow[no head, from=3-3, to=4-2]
		\arrow[no head, from=3-3, to=4-4]
		\arrow[dashed, no head, from=3-5, to=4-4]
		\arrow[no head, from=4-2, to=5-3]
		\arrow[no head, from=4-4, to=5-3]
		\arrow[no head, from=5-3, to=6-3]
	\end{tikzcd}\] and the intersection of all of these congruence conditions results in at most 
    $e$ cosets of $D+p^rG$ - note that some of the refinements earlier may have 
    reduced the number of cosets, so we cannot say that it is equal to $e$. 
    First, note that if $[G:D+p^rG]<\infty$, the result is immediate. So we may 
    assume that $[G:D+p^rG]=\infty$, and hence $[D+p^{r-1}G:D+p^rG]=\infty$ also.
    We will show that there is some $i$ with $[H_i+p^rG:D+p^rG]$ finite such that 
    a positive congruence relation of the form $ry\equiv_{H_i,p^r}t(\bar x)$ 
    appears in the definition, with $p\nmid r$ and $t$ a $\mathbb{Z}$-linear term. As in the proof of \Cref{main for a^p^s+p^rG} we first ensure that each congruence condition either restricts to or excludes exactly 1 coset of the appropriate subgroup, at the cost of the terms being $\mathbb{Z}[\frac{1}{p}]$-linear for congruences coming from subgroups of the form $H+p^iG$ with $i < r$.
    \nl\nl
    First, suppose that $[H_1 + p^rG:D+p^rG] = \infty$. Then, by \Cref{leftconvexnonconvexrelationship2} and \Cref{leftconvexrelationship} $[(H_1\cap p^{r-1}G) +D+ p^rG:D+p^rG] =\infty$ also. But then if there is no positive $D+p^rG$ congruence then for this $\bar\theta$, this intersection of congruence conditions results in an infinite number of $D+p^rG$ cosets intersected with a cofinite number of $D+p^rG$ cosets, which is hence infinite. So, there must be a positive $D+p^rG$ congruence.
    \nl\nl
    Let $j$ be maximal such that $[H_j+p^rG:D+p^rG]$ is finite. Consider $f_{\bar\theta, j}(x) = f_{\bar\theta}(x) +H_j$. We obtain (with some uniform case distinctions) a definition for $f_{\bar\theta, j}$ coming from the definition for $f_{\bar\theta}$ by once again making use of \Cref{QE 3.2,QE 3.3} as in the proof of \Cref{main for a^p^s+p^rG}; specifically, this definition contains no new terms, makes no reference to any new subgroups, and makes no reference to $H_l$ for $l<j$.
    \nl\nl
    If $H_j$ is a convex subgroup of $G$, then by the above this definition must contain a positive congruence condition of the form $ry\equiv_{H_j,p^r}t(\bar x)$ which is a lift of a congruence condition $ry\equiv_{H_i,p^r}t(\bar x)$ for some $i \leq j$, and so $f_{\bar\theta}$ satisfies the proposition, and any congruence condition coming from $H_l$ with $l>i$ is redundant beyond potentially imposing a consistency condition, which can also be ignored by the earlier restriction that $\models(\exists^{\geq 1}y)(\chi(\bar x, y, \bar\theta))$.
    \nl\nl
    On the other hand, if $H_j$ is of the form $\alpha^{[p^s]}$ for some $\alpha\geq D$, then we have that $[\alpha+p^rG:D+p^rG]<\infty$, and also by \Cref{main for a^p^s+p^rG} we obtain a positive congruence of the form $ry\equiv_{\alpha, p^r}^{[p^t]}$ for some $t\leq s$, where $\alpha^{[p^t]} = H_i$ for some $i$. Now, we may assume that $i$ is minimal such that there is a positive congruence of the form $H_i$.
    \nl\nl
    If $[H_i+p^rG:D+p^rG] =\infty$, then $[(D+p^{r-1}G)\cap (H_i+p^rG):D+p^rG]=\infty$ by \Cref{leftconvexnonconvexrelationship2}. Now, the positive congruence conditions we obtain from \Cref{main for a^p^s+p^rG} come from subgroups $H_i+p^rG,K_1+p^{l_1}G,\dots,K_m+p^{l_m}G$ with $l_i < r$. Hence, $D+p^{r-1}G\subseteq \bigcap\limits_{j\leq m}(K_j+p^{l_j}G)$. But then we have that $[(H_i+p^rG)\cap\bigcap\limits_{j\leq m}(K_j+p^{l_j}G):D+p^rG]=\infty$, and so $[(H_i+p^rG)\cap\bigcap\limits_{j\leq m}(K_j+p^{l_j}G):H_j+p^rG]=\infty$ as well, but this contradicts \Cref{main for a^p^s+p^rG}. So we must have that $[H_i+p^rG:D+p^rG] <\infty$. Hence $f_{\bar\theta}$ satisfies the proposition, and the congruence conditions coming from $H_l$ with $H_l = \beta^{H_l}$ and $\beta > \alpha$ are again redundant beyond imposing a consistency condition, which can again be ignored.
	\nl\nl
    Now, we have that each $f_{\bar\theta}$ satisfies the theorem statement, however $f$ is a family-union of the various $f_{\bar\theta}$; in order to extend this to $f$ we start by noting that there is a uniform bound on $N$; This is a straightforward application of compactness: for each $\bar\theta$ in the family there is some finite bound, so by compactness there must be a uniform bound for the entire family.
    Now, note that knowing $[H_i+p^rG:D+p^rG] = N<\infty$ fully determines $H_i+p^rG$, and hence also $H_i+p^jG$ for $j\leq r$, since we have that if $\alpha <\beta$ are convex subgroups then
    \begin{align*}
        \alpha+p^rG\leq \dots\leq \alpha^{[p^2]}+p^rG\leq\alpha^{[p]}+p^rG\leq\beta+p^rG
    \end{align*}
    so there is at most one such extension of $D+p^rG$ of given finite index.
    Hence, we see that there are only finitely many possible configurations for $f_{\bar\theta}$, each of which is definable by the above note, and so by refining the domain we see that $f$ satisfies the theorem statement also.    
\end{proof}

\begin{corollary}\label{nonuniform main for prG}
	Let $d,e\in\mathbb{N}_{>0}$, and $f:G^d\rightarrow (\frac{G}{p^rG})^{(e)}$ be a $B$-definable function. Then we can find some $j\in \mathbb{N}$, $\acl^{eq}(B)$-definable sets $\{Y_i\}_{i\leq j}$, $\acl^{eq}(B)$-definable linear functions $f_i:X_i\rightarrow \frac{G}{p^rG}$ such that $f = \bigcup_if_i|_{Y_i}$.
\end{corollary}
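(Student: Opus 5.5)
The plan is to derive the corollary from \Cref{prG uniformisable version} with $D=\{0\}$, which is a definable convex subgroup, so that $D+p^rG=p^rG$. That theorem gives $B$-definable sets $X_i$ and $B$-definable linear $f_{i,v},g_{i,w}$, together with subgroups $H_{i,v},K_{i,w}$, such that
\[
f=\bigcup_{i\leq k}\Bigl(\bigl(\bigcap_{v\leq n_i}f_{i,v}^{H_{i,v}}|_{X_i}\bigr)\setminus\bigl(\bigcup_{w\leq m_i}g_{i,w}^{K_{i,w}}|_{X_i}\bigr)\Bigr).
\]
Each of these subgroups contains $p^rG$ with index below $N$, and one positive subgroup $H_{i,0}$ has $[H_{i,0}:p^rG]<N$. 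What remains is to turn each such boolean combination of cosets into a finite union of single linear functions into $\frac{G}{p^rG}$. The price of this step is moving from $B$ to $\acl^{eq}(B)$.

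First I handle a fixed piece $X_i$. The positive part $\bigcap_v f_{i,v}^{H_{i,v}}$ determines, for each $\bar x\in X_i$, a single coset of $H:=\bigcap_v H_{i,v}$, which is a subgroup of finite index containing $p^rG$. This is because all the $H_{i,v}$ lie above $p^rG$ with bounded index, and $H\subseteq H_{i,0}$. The excluded cosets $g_{i,w}^{K_{i,w}}(\bar x)$ are also unions of $p^rG$-cosets. So $f(\bar x)$ is the set of $p^rG$-cosets inside a fixed coset $c(\bar x)+H$ that avoid finitely many bounded-index cosets. Next I fix a set of representatives $c_1,\dots,c_M$ of $\frac{H}{p^rG}$, with $M<N$. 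Each $p^rG$-coset in the target has the form $c(\bar x)+c_j+p^rG$, where $c(\bar x)$ is the linear lift $\frac1{s}(\sum r_ix_i+b+k)$ coming from $f_{i,v}$. The individual cosets $c_j+p^rG$ are not $B$-definable in general. However, $\frac{H}{p^rG}$ is a finite $B$-definable set, so each of its elements lies in $\acl^{eq}(B)$, which justifies passing to $\acl^{eq}(B)$.

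The refined functions are then $\bar x\mapsto f_{i,v_0}(\bar x)+c_j+p^rG$; these are linear over $\acl^{eq}(B)$ once a constant from $\acl^{eq}(B)$ is absorbed into the linear term. I set
\[
Y_{i,j}=\bigl\{\bar x\in X_i:\; f_{i,v_0}(\bar x)+c_j+p^rG\in f(\bar x)\bigr\},
\]
which is $\acl^{eq}(B)$-definable. Then $f=\bigcup_{i,j}(f_{i,v_0}+c_j)|_{Y_{i,j}}$, because every coset of $f(\bar x)$ is hit by exactly one $j$. Here $f$ is read as a relation, as in the paper's convention for unions of functions into $(\frac{G}{H})^{(e)}$.

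I expect the main obstacle to be the bookkeeping around consistency between the positive $H_{i,v}$-conditions. The coset $c(\bar x)+H$ has to be chosen compatibly: the intersection of the positive congruences might be empty, or might pin down a coset of $H$ that is not simply $f_{i,0}(\bar x)+H$. I would deal with this by refining $X_i$ further by the finitely many possible relative positions $f_{i,v}(\bar x)-f_{i,0}(\bar x)\bmod H_{i,v}$. These positions range over the finite quotients $\frac{H_{i,v}}{p^rG}$, whose elements are again in $\acl^{eq}(B)$. After this refinement every intersection is literally a translate of $f_{i,0}(\bar x)+H$ by an $\acl^{eq}(B)$-constant.
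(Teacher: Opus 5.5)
Your proposal is correct and is essentially the paper's argument. Like the paper, you apply \Cref{prG uniformisable version} with $D=\{0\}$, split the positive congruence modulo the finite-index group $H_{i,0}$ into finitely many $p^rG$-congruences given by representatives of $H_{i,0}/p^rG$ (these classes lie in $\acl^{eq}(B)$ because $H_{i,0}$ is the unique group of its form with that index over $p^rG$), and restrict to the sets where each resulting coset lies in $f(\bar x)$.

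One simplification and one correction. Take representatives of $H_{i,0}/p^rG$, as the paper does, rather than of $\bigl(\bigcap_v H_{i,v}\bigr)/p^rG$. Every coset in the $i$th piece already lies in $f_{i,0}^{H_{i,0}}(\bar x)$, so this makes your final consistency refinement unnecessary. It also avoids an error in that refinement: for $v\neq 0$, groups such as $H_{i,v}=\alpha+p^sG$ with $s<r$ can have infinite index over $p^rG$. So $H_{i,v}/p^rG$ need not be finite, and the finite set you would actually need to refine by is $H_{i,0}/(H_{i,0}\cap H_{i,v})$. For the same reason, your remark that every $H_{i,v}$ has index below $N$ over $p^rG$ is too strong; only $H_{i,0}$ is needed.
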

\begin{proof}
	Let $X_i, H_{i,v}, K_{i,v}, f_{i,v}, g_{i,v}$ be as in \Cref{prG uniformisable version}, and let $f_i$ be the $i^{th}$ summand of the obtained definition for $f$. As remarked at the end of the proof of \Cref{prG uniformisable version}, if $[H_{i,0}+p^rG:p^rG]=[H_{i,j}+p^rG:p^rG]=M_i$, then $H_{i,0}+p^rG=H_{i,j}+p^rG = \{d_{i,l}+p^rG\}_{l<M_i}$, and $d_{i,l}+p^rG \in\acl^{eq}($\cs$D$\ce$)$, as witnessed by the formula
	\begin{align*}
		(\exists \beta\in \mathcal{S}_H)(&[\beta^H+p^rG:D+p^rG] = M_i \\
		&\wedge d_{i,l}+D+p^rG \subseteq \beta^H+p^rG
	\end{align*}
	And $v\equiv_{H_i,p^r}w$ is equivalent to $\bigvee\limits_{l<M_i}v\equiv_{D,p^r}d_{i,l}+w$, and so there are a finite number of $\acl^{eq}($\cs$D$\ce$)\cup B$-definable linear congruence relations of the form $m_iy \equiv_{D,p^r}t_i(\bar x)+g_{i,l}$, with $p\nmid m_l$, such that on $X_i$ we have $y\in f_i(\bar x)\rightarrow \bigvee m_iy \equiv_{D,p^r}t_i(\bar x)+g_{i,l} $, and so we can refine to the definable sets $X_{i,l} := \{\bar x :  (\forall y) ([m_iy \equiv_{D,p^r}t_i(\bar x)+g_{i,l}]\rightarrow y\in f_i(\bar x)\}$; each of these definable sets is $\acl^{eq}($\cs$D$\ce$)\cup B$-definable, and clearly $f =\bigcup f_{i,l}|_{X_{i,l}}$ where $f_{i,l}(x) = y\iff m_iy \equiv_{p^r}t_i(\bar x)+g_{i,l}$.
	
\end{proof}

\begin{corollary}\label{uniform main for prG}
    Let $d,e\in\mathbb{N_{>0}}$, $\mathcal{D}$ a definable family of convex subgroups, and $\left\{f_D:G^d\rightarrow(\frac{G}{D+p^rG})^{(e)}\right\}_{D\in\mathcal{D}}$ a definable family of functions. Then there is a partition of $\mathcal{D}$ into definable sets $\mathcal{D}_1,\dots,\mathcal{D}_\ell$ such that \Cref{prG uniformisable version} holds uniformly in $D$ on each $\mathcal{D}_i$ (with each $X_i$ being replaced by a $B$-definable family of sets $X_i^D$).
\end{corollary}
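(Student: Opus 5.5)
We plan to run the proof of \Cref{prG uniformisable version} with $D$ treated as a parameter from the definable family $\mathcal{D}$, exactly as \Cref{Uniform main for convex} and \Cref{main for a^p^s+p^rG} were uniformised. As in the note following \Cref{Uniform main for convex}, we parameterise $\mathcal{D}$ by imaginaries and add the uniform symbols $\equiv_D$, $\equiv_{D,p^r}$ and the predicates $\mathcal{D}_n$ on the spines. By \cite{QE}, the family $\{f_D\}$ is then given by a single formula
\begin{align*}
\bigvee_i(\exists\bar\theta)(\xi_i(\bar\theta,\bar\eta,D)\wedge\psi_i(\bar x,y,\bar\theta)),
\end{align*}
where the only dependence on $D$ is through $\xi_i$ and through the new $D$-congruence atoms. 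The reductions to the formula $(\exists\bar\theta)(\xi\wedge\chi)$ (Chinese Remainder Theorem, restriction to $X_i$, and the ``at least one $y$'' assumption) involve only finitely many case distinctions, each expressed by a formula in $D$. So they produce $B$-definable families $X_i^D$.

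Next we redo the removal of congruences on subgroups $H+p^iG\subseteq D+p^iG$, using \Cref{QE 3.2,QE 3.3}. The case distinctions there concern whether indices such as $[M_0\cap(K+p^rG):M_i\cap(K+p^rG)]$ equal $p^j$ for $j<N$, with $N$ depending only on the formula. Each is a first-order condition in $D$ and $\bar\theta$, hence uniformly definable. The core argument, which finds $i$ with $[H_i+p^rG:D+p^rG]<\infty$ and a positive congruence $m y\equiv_{H_i,p^r}t(\bar x)$, uses \Cref{leftconvexrelationship}, \Cref{leftconvexnonconvexrelationship2} and \Cref{main for a^p^s+p^rG} pointwise for each $D$. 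It goes through verbatim, because those lemmas are statements about an arbitrary convex $D$.

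The main obstacle is the uniform bound $N$ on the finite indices $[H_{i,v}+p^rG:D+p^rG]$, now needed across all $D\in\mathcal{D}$ rather than all $\bar\theta$ for a fixed $D$. We would obtain it by compactness as in the last paragraph of the proof of \Cref{prG uniformisable version}. Suppose no bound exists. Then by saturation there is some $D^*$ (in an elementary extension) and a $\bar\theta$ for which every relevant index is infinite, or the witnessing positive congruence is absent. This contradicts the pointwise result applied to $f_{D^*}$. Here we must check that ``index $\geq M$'' is uniformly definable for each fixed $M$; it is, by counting cosets.

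With $N$ fixed, we have finitely many possible configurations for each $D$: choices of the terms $f_{i,v},g_{i,w}$ from the finitely many terms in the formula, the exponents $s,t\le$ some bound, and the finite index values below $N$. Each configuration holds on a definable subset of $\mathcal{D}$, by the same observation that a finite index determines the subgroup. Partitioning $\mathcal{D}$ according to which configuration realises $f_D$ gives $\mathcal{D}_1,\dots,\mathcal{D}_\ell$, and on each piece the decomposition of \Cref{prG uniformisable version} holds with the same data and families $X_i^D$. We do not pass to $\acl^{eq}$ here, which is why we state the corollary in the form of \Cref{prG uniformisable version} rather than \Cref{nonuniform main for prG}.
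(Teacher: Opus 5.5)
Your proposal is correct and follows essentially the same route as the paper. You rerun the argument of \Cref{prG uniformisable version} with $D$ as a parameter, obtain a uniform bound $N$ on the finite indices by compactness (arguing by contradiction with the pointwise result), and partition $\mathcal{D}$ according to the finitely many configurations of terms, exponents and index values; these pieces are definable because a subgroup of the form $\alpha+p^rG$ or $\alpha^{[p^t]}+p^rG$ is determined by its finite index over $D+p^rG$. The paper makes that last step explicit with a coset-counting formula defining $\alpha+p^rG$ from $D+p^rG$.
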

\begin{proof}
    First, note that if $\alpha\in\Sn$ with $[\alpha+p^jG:D+p^jG] = k$ for some $j,k\in\mathbb{N}$, then $[\alpha+p^rG:D+p^rG] = k^{\frac{r}{j}}=M$, and so since convex subgroups are totally ordered by inclusion $\alpha+p^rG$ is definable from $D+p^rG$ by
    \begin{align*}
        (\exists\alpha\in\Sn)(\exists x_1,\dots,x_M)(&\bigwedge\limits_{1\leq i\leq M} x_i\in\alpha+p^rG \\
        &\wedge\bigwedge\limits_{i\neq j}x_i-x_j\notin D+p^rG \\
        &\wedge (\forall z)(z\in \alpha+p^rG \rightarrow \bigvee\limits_{1\leq i\leq M}x_i-z\in D+p^rG)
    \end{align*}
    and hence so is $\alpha+p^iG$ for any $i\leq r$. If $[\alpha^{[p^s]}+p^jG:D+p^jG] = k$, then we get that $\alpha+p^iG$ is definable from $D+p^rG$ for any $i\leq r$, and then $\alpha^{[p^s]}+p^iG$ is definable from $\alpha+p^iG$.
    \nl\nl
    So we see that the conditions in \Cref{prG uniformisable version} are definable from $D$, and so by compactness there is a uniform bound for $N$ - if not, then there is some $D\in\mathcal{D}$ with $f_D$ not of the form given by \Cref{prG uniformisable version}. So, as each of the functions we are looking for comes from the terms in the definition, we only have finitely many possible configurations which give a function $g_D$ compatible with $f_D$, and so we may partition $\mathcal{D}$ into sets defined by which combination of these configurations and case distinctions gives rise to $f_D$. Note that the case distinctions are what give rise to the uniform families $X_i^D$.
\end{proof}




\section{Examples}\label{examples section}
In this section, we go over some examples to show why the given theorems cannot be improved the the best imaginable version (that is, being uniformly piecewise linear) in all of the cases. To start with, we give a method to generate ordered abelian groups with certain useful properties.
\subsection{Patterns in $\dim _{\mathbb{F}_p}\frac{\alpha^{[p^s]}+pG}{\alpha^{[p^{s+1}]}+pG}$}
Let $\mathbb{P}$ denote the set of primes. We show that for any function $f:\mathbb{P}\times \mathbb{N}_{\geq 1}\rightarrow\mathbb{N}\cup\{\infty\}$, there is a group $G$ such that $\dim _{\mathbb{F}_p}\frac{\alpha^{[p^s]}+pG}{\alpha^{[p^{s+1}]}+pG} = f(p,s)$

\begin{lemma}\label{Base example for dimensions}
	For $p\in\mathbb{P}$, $n\in \mathbb{N}_{\geq 1}$, let $G_{p^n} = \{q_0+q_1t + \dots q_mt^m=q(t)\in \mathbb{Z}[t] : p^n |\sum q_i\}$. Then for $\alpha = \{0\}$
	
	\[ \dim _{\mathbb{F}_p}\frac{\alpha^{[p^s]}+pG_{p^n}}{\alpha^{[p^{s+1}]}+pG_{p^n}} = \begin{cases} 
		0 & s\geq 1, s\neq n \\
		1 & s = n 
	\end{cases}
	\]
\end{lemma}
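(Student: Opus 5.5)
The plan is to compute everything explicitly inside $\mathbb{Z}[t]$, using the fact that $G_{p^n}$ is the kernel of $q\mapsto \sum q_i \bmod p^n$. In particular $G_{p^n}$ has index $p^n$ in $\mathbb{Z}[t]$ and contains $p^n\mathbb{Z}[t]$.

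\textbf{Step 1: the convex subgroups.} Take the ordering of $\mathbb{Z}[t]$ to be the lexicographic one intended in the construction; I would fix it at the outset, since the whole calculation depends on it. With this ordering, the proper nontrivial convex subgroups of $G:=G_{p^n}$ should be the sets $C_k\cap G$, where $C_k$ is the span of the monomials on one side of degree $k$. The coefficient constraint $p^n\mid\sum q_i$ is the only thing that links different degrees, so all divisibility questions reduce to coefficientwise conditions together with one condition on the coefficient sum.

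\textbf{Step 2: compute $\{0\}^{[p^s]}$ and $pG$.} By \Cref{def D^{[n]}},
\begin{align*}
\{0\}^{[p^s]}=\bigcap_k \left((C_k\cap G)+p^sG\right).
\end{align*}
\begin{itemize}
\item An element $x$ lies in $(C_k\cap G)+p^sG$ iff two things hold. First, its coefficients outside the degrees covered by $C_k$ are $\equiv 0 \bmod p^s$. Second, a correction term $c\in C_k\cap G$ can be chosen so that $(x-c)/p^s$ has coefficient sum divisible by $p^n$.
\item The freedom in choosing $c$ is a coefficient sum running over multiples of $p^{\max(n-s,0)}$. Intersecting over all $k$ therefore forces every coefficient of $x$ to be divisible by $p^s$.
\item What remains is a sum condition. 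I expect it to read: $x=p^sy$ with $p^{\max(n-s,0)}\mid\sum y$, possibly with an extra $p$-adic constraint on $\sum y$ when $s\ge n$.
\item Similarly, $pG=\{py:\ p^{n-1}\mid \sum y\}$.
\end{itemize}

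\textbf{Step 3: the quotients.} All groups involved lie between $G$ and $p^{s+1}\mathbb{Z}[t]\cap G$ and are determined by coefficientwise $p$-divisibility together with a $p$-adic valuation condition on the coefficient sum. Consequently the homomorphism $x\mapsto\sum x_i$, read modulo a suitable power of $p$, should detect the quotient $\frac{\alpha^{[p^s]}+pG}{\alpha^{[p^{s+1}]}+pG}$. So I would:
\begin{itemize}
\item reduce both groups modulo $pG$;
\item observe that the coefficientwise parts agree modulo $pG$;
\item conclude that the quotient is at most one-dimensional, spanned by the image of a single element such as $p^{s}\cdot 1$ (or $1$ adjusted to lie in $G$), whose coefficient sum has the relevant valuation.
\end{itemize}
The case split is then whether the sum-valuation thresholds for $\alpha^{[p^s]}$ and $\alpha^{[p^{s+1}]}$ differ modulo the threshold for $pG$. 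Since the threshold is $\max(n-s,0)$, this jump should occur exactly once, at $s=n$, giving dimension $1$ there and $0$ for all other $s\geq 1$.

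\textbf{Main obstacle.} The hard part is the bookkeeping in Steps 2--3: pinning down the exact sum condition in $\{0\}^{[p^s]}$, and checking that the single candidate generator is nonzero modulo $\alpha^{[p^{s+1}]}+pG$ precisely when $s=n$. A naive computation suggests $p^n\mathbb{Z}[t]\subseteq pG$, which would kill the candidate. So I would first verify carefully which convex subgroups arise under the chosen ordering, since this controls whether the intersection in Step 2 retains a nontrivial sum constraint. The natural test elements are $1$, $p^{n-s}\cdot 1$, and $t^i-t^j$. If the naive generator dies, I would look for the surviving class among elements whose coefficient sum has valuation exactly $n$ but which are not coefficientwise divisible by $p^{s+1}$.
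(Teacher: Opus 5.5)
There is a genuine gap, and it sits in the decisive step: your description of $pG$ is wrong. Since $G=G_{p^n}$, we have $pG=\{py:\ y\in G\}=\{py:\ p^n\mid\sum y_i\}=\{x:\ p\mid x_i \text{ for all } i,\ p^{n+1}\mid\sum x_i\}$. The set you wrote, $\{py:\ p^{n-1}\mid\sum y_i\}$, is $p\mathbb{Z}[t]\cap G$, which contains $pG$ with index $p$.

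This error is exactly the source of your ``naive computation'' $p^n\mathbb{Z}[t]\subseteq pG$. With your $pG$, every $\alpha^{[p^s]}$ with $s\ge 1$ already lies in $pG$, so all the quotients would be $0$ and the case $s=n$ would fail. The obstacle you flag therefore has nothing to do with the convex subgroups. With the correct $pG$, the element $p^n\cdot 1$, whose coefficient sum has valuation exactly $n$, is not in $pG$.

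Your Step~3 heuristic is also wrong. $\max(n-s,0)$ changes at every $s<n$ and is constant for $s\ge n$, so it does not ``jump exactly once at $s=n$''. The quantity that matters is the least $p$-adic valuation of $\sum x_i$ over $x\in\alpha^{[p^s]}$, namely $\max(s,n)$. It must be compared against the threshold $n+1$ coming from $pG$.

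What is missing is the following argument, which is essentially the paper's. Take the order in which the lowest-degree nonzero coefficient decides the sign. Then the nontrivial convex subgroups are $\{q\in G:\ q_0=\dots=q_{k-1}=0\}$, and your Step~2 formula is correct with no extra constraint: $\alpha^{[p^s]}=\{x:\ p^s\mid x_i,\ p^n\mid\sum x_i\}$. You were right that the order must be fixed. For the other lexicographic order, the smallest nontrivial convex subgroup is $p^n\mathbb{Z}$, $\alpha^{[p^s]}=p^n\mathbb{Z}+p^sG$, and every quotient is $0$.

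Now for $s\ge 1$ one has $pG\subseteq\alpha^{[p^s]}+pG\subseteq p\mathbb{Z}[t]\cap G$. The map $x\mapsto p^{-n}\sum x_i \bmod p$ induces $(p\mathbb{Z}[t]\cap G)/pG\cong\mathbb{Z}/p\mathbb{Z}$.
\begin{itemize}
\item If $s\le n$, then $p^n\cdot 1\in\alpha^{[p^s]}\setminus pG$, so $\alpha^{[p^s]}+pG=p\mathbb{Z}[t]\cap G$.
\item If $s\ge n+1$, then every coefficient is divisible by $p^{n+1}$, so $\alpha^{[p^s]}\subseteq pG$.
\end{itemize}
This gives dimensions $0$, $1$, $0$ for $s<n$, $s=n$, $s>n$ respectively.

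The paper handles $s<n$ by writing each $q\in\alpha^{[p^s]}$ as $(q+u)-u$, with an explicit $u\in pG$ and $q+u\in\alpha^{[p^{s+1}]}$. It handles $s=n$ with the same sum map. Once $pG$ is computed correctly, the index-$p$ sandwich above is a slightly tidier packaging of the same computation.
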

\begin{proof}
    First, note that $\alpha^{[p^s]} = \{\sum q_i t^i : p^s | q_i \wedge p^n|\sum q_i\}$. So, if $s > n$ this reduces to $\{\sum q_i t^i : p^s | q_i\}\subseteq \{\sum q_it^i : p^{n+1}|q_i\}\subseteq \{\sum q_it^i : p | q_i \wedge p^{n+1} |\sum q_i\} =pG_{p^n}$. This gives us the case for $s > n$.
    
    In the case where $s < n$, let $q(t)=q_0+\dots+q_mt^m\in \alpha^{[p^s]}$.
    Let $u_i = (p-1)q_i$ for $0\leq i \leq m$, and $u_{m+1} = -\sum\limits_{0\leq i \leq m} u_i$. Let $u(t) = \sum\limits_{0\leq i\leq
    m+1}u_it^i$; note that $u(t)\in pG_{p^n} $. Also note that $p^n | u_{m+1}$, and hence $p^{s+1}|u_{m+1}$, and so we have that $u(t) + q(t) \in \alpha^{[p^{s+1}]}=\{v(t) : p^{s+1}|v_i \wedge p^n|\sum v_i\}$.
    \nl\nl
    Finally, we deal with the case where $s=n$. Let $f:
    \alpha^{[p^s]}+pG_{p^n}$ be defined by $f(q(t)) = \frac{\sum q_i}{p^n}
    \mod p$. Note that if $f(q(t)) = 0$, then $p^{n+1}|\sum q_i$ and $p| q_i$,
    so $q_i \in pG_{p^n}$. Also, note that $f(rp^n)=r$. Hence, this descends
    to a group isomorphism $\tilde{f}:\frac{\alpha^{[p^n]}+pG_{p^n}}
    {\alpha^{[p^{n+1}]}+pG_{p^n}}\rightarrow \frac{\mathbb{Z}}{p\mathbb{Z}}$
\end{proof}
\begin{lemma}\label{General example for dimensions}
	Let $\mathbb{N}_{\geq 1} = \coprod\limits_{i\in\omega} U_i$ with each
    $U_i$ infinite, and let $s_i(q(t)) = \sum\limits_{j\in U_i} q_j$. Let $G = 
    \{q(t)\in \mathbb{Z}[t] : (\forall i)( p_i^{n_i} | s_i(t))\}$ where
    $p_i\in\mathbb{P}, n_i\in \mathbb{N}_{\geq 1}$, then for $\alpha = \{0\}$ 
    we have that:
	\begin{align*}
	    \dim _{\mathbb{F}_p}\frac{\alpha^{[p^s]}+pG}{\alpha^{[p^{s+1}]}+pG} = |\{i : p_i = p\wedge n_i = s\}|
	\end{align*}
\end{lemma}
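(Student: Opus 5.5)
We first compute $\alpha^{[p^s]}$ explicitly in $G$, exactly as in the proof of \Cref{Base example for dimensions}. The ordering on $\mathbb{Z}[t]$ is presumably the same one used there. Since $\alpha=\{0\}$, $\alpha^{[p^s]}$ is the intersection of $\beta+p^sG$ over nonzero convex $\beta$. We expect the analogous description to hold:
\[
\alpha^{[p^s]} = \{q(t)\in G : (\forall j)(p^s\mid q_j)\},
\qquad\text{where the conditions } p_i^{n_i}\mid s_i(q) \text{ are inherited from } G.
\]
The reason is the same as in the base case. Any convex $\beta\neq 0$ contains the tail monomials. Modulo $p^sG$, the tail coordinates can therefore be used to fix every constraint $p_i^{n_i}\mid s_i$ simultaneously, because each $U_i$ is infinite. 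So only the coordinatewise divisibility survives in the intersection. We would record this as a first claim and verify it by adapting the $u(t)$ correction trick. Now a correction has to be made for each block $U_i$, placing the compensating coefficient at some unused index of $U_i$.

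The key step is to exhibit the quotient $\frac{\alpha^{[p^s]}+pG}{\alpha^{[p^{s+1}]}+pG}$ as a direct sum of copies of $\mathbb{F}_p$, one for each block $i$. Define $F:\alpha^{[p^s]}+pG\to\bigoplus_{i}\mathbb{F}_p$ by
\[
q\mapsto\Bigl(\tfrac{s_i(q)}{p^s}\bmod p\Bigr)_{i:\,p_i=p,\ n_i=s}.
\]
We must check three things. First, $F$ is well defined on $\alpha^{[p^s]}+pG$. Elements of $pG$ satisfy $p\cdot p_i^{n_i}\mid s_i$, so for $p_i=p$ and $n_i=s$ they contribute $0$. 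Elements of $\alpha^{[p^s]}$ have $s_i$ divisible by $p^{\max(s,n_i)}$. Only finitely many coordinates are nonzero, so the map lands in the direct sum. Second, $F$ kills $\alpha^{[p^{s+1}]}+pG$, since there $p^{s+1}\mid s_i$. Third, $F$ is surjective. Choose $q=p^s t^{j}$ with $j\in U_i$, corrected by elements of $pG$ at other indices of the same block so that membership in $G$ is preserved. These elements can be chosen, using the infinitude of $U_i$, so that only block $i$ sees a nonzero value.

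For injectivity of the induced map, suppose $q\in\alpha^{[p^s]}$ has $F(q)=0$. We must write $q\in\alpha^{[p^{s+1}]}+pG$. Work block by block, repeating the three cases of \Cref{Base example for dimensions}. For blocks with $p_i\neq p$, or $n_i<s$, or $n_i>s$, we can adjust by an element of $pG$ supported in $U_i$ so that every coefficient becomes divisible by $p^{s+1}$ while the block-sum constraint is kept. For blocks with $p_i=p$ and $n_i=s$, the hypothesis $F(q)=0$ gives $p^{s+1}\mid s_i(q)$, and the same adjustment works. Since $q$ has finite support, only finitely many blocks need adjusting, and the sum of these adjustments is a polynomial.

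We expect the main obstacle to be the case $p_i\neq p$, together with the precise bookkeeping in the first claim. When $p_i\ne p$, divisibility by $p_i^{n_i}$ has to be preserved while we multiply coefficients by units mod $p$. Here we would use the Chinese remainder theorem, taking the correction coefficients divisible by $p_i^{n_i}$ and using the fact that $p_i^{n_i}$ is invertible mod $p^{s+1}$. If the first claim turns out to need the exact form of the order from \Cref{Base example for dimensions}, we would state it explicitly at that point. Combining these steps shows that the dimension equals the number of blocks with $p_i=p$ and $n_i=s$.
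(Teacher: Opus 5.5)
Your proposal is correct and follows the paper's route: the paper's proof only says to adapt the proof of \Cref{Base example for dimensions}, and you do exactly that block by block, using tail corrections inside each infinite $U_i$ to identify $\alpha^{[p^s]}$, the map $q\mapsto (s_i(q)/p^s \bmod p)_i$ generalising $\tilde f$, and the $u(t)$ trick within each block. Two steps are simpler than you expect: $p^s t^j$ with $j\in U_i$, $p_i=p$, $n_i=s$ already lies in $\alpha^{[p^s]}$, so surjectivity needs no correction, and when $p_i\neq p$, or $p_i=p$ with $n_i<s$, or $n_i=s$ and $F(q)=0$, the part of $q$ supported on $U_i$ already lies in $pG$, so no Chinese remainder argument is needed.
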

\begin{proof}
    This is a straightforward modification of the proof of \Cref{Base example for dimensions}
\end{proof}
\subsection{An example of a definable function that is not piecewise linear}
Let $\mathbb{N}_{\geq 1} = \coprod\limits_{i\in\omega} U_i$ with each $U_i$
infinite, then set $G = \{q(t)\in \mathbb{Z}[t] : (\forall i)( p^2 | 
s_i(t))\}$, where $s_i$ is defined as above. We show that there is a definable
partial function $f:G\rightarrow \frac{G}{\{0\}^{[p^3]}+p^2G}$ for which there
does not exist a finite partition of definable sets $\{X_i\}_{i< n}$ and
definable linear functions $f_i:X_i\rightarrow \frac{G}{\{0\}^{[p^3]}+p^2G}$
such that $f = \bigcup\limits_{i < n} f_i|_{X_i}$.
\begin{claimproof}
	Let $\alpha = \{0\}$. Then note that by \Cref{General example for dimensions}
	\[ \dim_{\mathbb{F}_p}\frac{\alpha^{[p^s]}+pG}{\alpha^{[p^{s+1}]}+pG} = 
    \begin{cases}
		0 & s\neq 2 \\
		\infty & s = 2 
	\end{cases}
	\]
	Now, by \Cref{Describing a^p^j - a^p^s quotients}, \[
	\frac{(\alpha^{[p^3]}+pG)\cap(\alpha^{[p^2]}+p^2G)}{\alpha^{[p^3]}+p^2G} = 
    \frac{\alpha^{[p^{3-1}]}\cap pG+\alpha^{[p^3]}+p^2G}{\alpha^{[p^3]}+p^2G} 
    \cong \frac{\alpha^{[p^1]}+pG}{\alpha^{[p^2]}+pG}
	\]
	And so $(\alpha^{[p^3]}+pG)\cap(\alpha^{[p^2]}+p^2G) = 
    \alpha^{[p^3]}+p^2G$. We also have that \[\dim_{\mathbb{F}
    _p}\frac{\alpha^{[p^2]}+p^2G}{\alpha^{[p^3]}+p^2G} = \dim_{\mathbb{F}
    _p}\frac{\alpha^{[p^2]}+pG}{\alpha^{[p^3]}+pG} = \infty\]
	Furthermore, by \Cref{infiniterightquotientsnonconvex} we have that \[ 
    \dim_{\mathbb{F}_p} \frac{\alpha^{[p^2]}+pG}{\alpha^{[p^2]}+p^2G} = 
    \dim_{\mathbb{F}_p}\frac{\alpha^{[p^3]}+pG}{\alpha^{[p^3]}+p^2G} = \infty\]
	Now, we are in a position to define the function. First, let
	\begin{align*}
		f_1:G&\rightarrow\frac{G}{\alpha^{[p^2]}+p^2G} & f_2:G&\rightarrow\frac{G}{\alpha^{[p^3]}+pG}\\
		x&\mapsto x+\alpha^{[p^2]}+p^2G &x&\mapsto \alpha^{[p^3]}+pG
	\end{align*}
	And then let $f= f_1\cap f_2$ be the induced partial function
    $f:G\rightarrow \frac{G}{\alpha^{[p^3]}+p^2G}$. We show that $f$ is not
    piecewise linear. First, note that the domain of $f$ is \begin{align*}
        (\alpha^{[p^3]}+pG) + (\alpha^{[p^2]}+p^2G) = \alpha^{[p^2]}+pG
    \end{align*}
    Now, suppose that we have some linear piece $h_i(x) = \frac{1}{b_i}(a_ix + 
    g_i) + \alpha^{[p^3]}+p^2G$ with $g_i\in G,a_i,b_i\in \mathbb{Z}$ with
    $a_i,b_i$ coprime and $b_i\neq 0$.
	\nl\nl
	First, suppose that $p|a_i$. Then $p\nmid b_i$, and so there is some $q\in 
    \mathbb{Z}$ with $qb_i \equiv 1 \mod p^2$, and then $h_i(x) = qa_i(x) + 
    qg_i + \alpha^{[p^3]}+p^2G$, so we may assume that $b_i = 1$. Now, note
    that for $x \in \alpha^{[p^2]}+pG$,
    \begin{align*}
        px \in p\alpha^{[p^2]}+p^2G = 
        \alpha^{[p^3]}\cap pG + p^2G \subseteq \alpha^{[p^3]}+p^2G
    \end{align*}
	Hence, $h_i(x) = g_i + \alpha^{[p^3]}+p^2G$. But then, as $h_i(x) 
    \subseteq f_1(x)$, we must have that $g_i \equiv_{\alpha, p^2}^{[p^2]} x$.
    Hence, the domain of $f_i$ is contained in $g_i + \alpha^{[p^2]}+p^2G$.
    \nl\nl
	Next, suppose that $p\nmid a$. Then, there is some $r$ such that $ar 
    \equiv 1 \mod p^2$, and hence
    \begin{align*}
        brh_i(x) = x + rg + \alpha^{[p^3]}+p^2G 
    \end{align*}
    \noindent However, since $h_i(x) \subseteq f_2(x)$, we have that
    $h_i(x),brh_i(x)\subseteq \alpha^{[p^3]}+pG$. And so we must have that $x 
    \equiv_{\alpha,p}^{[p^3]} -rg $, so the domain of $h_i$ is contained in a
    single coset of $\alpha^{[p^3]}+pG$.
	\nl\nl
	Hence, we see that any linear piece has domain contained in a single coset
    of either $\alpha^{[p^3]}+pG$ or $\alpha^{[p^2]}+p^2G$. But then any union
    of a finite number of linear pieces has domain strictly smaller than
    $(\alpha^{[p^3]}+pG)+(\alpha^{[p^2]}+p^2G)$ since both of these groups are
    infinite index in the sum.
\end{claimproof}

\subsection{Uniformity for families of subgroups of the form $D+p^rG$}
The difference between the non-uniform \Cref{nonuniform main for prG} and the
uniform \Cref{uniform main for prG} is rather stark, with \Cref{nonuniform main for prG} being of a much simpler form, but introducing a source of
nonuniformity coming from $\acl^{eq}$; one might wonder if it is possible to
get a uniform version of \Cref{nonuniform main for prG}, but the following
example shows that this is not possible:
\nl\nl
Let $G=\bigoplus_\omega\mathbb{Z}_{(2)}$, ordered lexicographically. Then $\mathcal{S}_2 \cong(\omega+1)^*$, where $L^*$ denotes the reverse order on
$L$. Let us write
\begin{align*}
    \mathcal{S}_2 = \{\{0\}, \dots, \alpha_{n+1},\alpha_n,\dots,\alpha_1,\alpha_0\}
\end{align*}
Then we have that for $i\geq 1$, $\frac{\alpha_{i-1}+2G}{\alpha_i+2G}
\cong\frac{\mathbb{Z}}{2\mathbb{Z}}$, and we have a definable family of
functions given by
\begin{align*}
    f_{\alpha_i}(x) = y+\alpha_i+2G \iff (y-x \in \alpha_{i-1}+2G)\wedge(y-x \notin \alpha_{i}+2G)
\end{align*}
To show that this cannot be described as a uniform projection of a linear
function, even after partitioning $\mathcal{S}_2\setminus\{\alpha_0,\{0\}\}$
into finitely many sets, first note that one of the partition sets would need
to be co-initial in $\mathcal{S}_2$. Next, note that $\frac{G}{\alpha_i+2G}$
can be seen as binary sequences of length $i$, and $\frac{G}{2G}$ can be seen
as finite length binary sequences; under this interpretation, $f_{\alpha_i}$
flips the first digit. It is clear that the linear function must hence be of
the form $f_{\alpha_i}(x) = x+g+\alpha_i+2G$.
To ensure that $f_{\alpha_i}$ is as required, we must have $g$ with
$g+\alpha_i+2G = (1,0,\dots,0)$. However, in order for a single $g$ to work for
$\alpha_i$ and $\alpha_j$ with $i< j$, then $g+2G$ must have a $1$ in both the
$i^{th}$ and the $j^{th}$ position of the binary sequence, but the $1$ in the
$i^{th}$ position contradicts the requirement for $f_{\alpha_j}$.


\printbibliography[title={References}]



\end{document}